\newcommand{\pvec}{\mathbf{p}}
\newcommand{\nvec}{\mathbf{n}}
\newcommand{\Qvec}{\mathbf{Q}}
\newcommand{\Mvec}{\mathbf{M}}
\newcommand{\mvec}{\mathbf{m}}
\crefname{hypothesis}{Hypothesis}{Hypotheses}
\title{One-dimensional ferronematics in a channel: order reconstruction, bifurcations and multistability\thanks{Submitted to the editors DATE.
\funding{
PEF is supported by the Engineering and Physical Sciences Research Council [grant numbers EP/R029423/1 and EP/V001493/1].
AM and JD are supported by a DST-UKIERI grant on ``Theoretical and experimental studies of suspensions of magnetic nanoparticles, their applications and generalisations''. AM is supported by a Leverhulme International Academic Fellowship, the University of Strathclyde's New Professor Fund and an OCIAM Visiting Fellowship. AM thanks Giacomo Canevari for informative discussions on $\Gamma$-convergence and Neela Nataraj, Ruma Maity for discussions on numerical analysis. AM thanks Varsha Banerjee and Konark Bisht for their collaboration in 2019.
    JX is supported by the EPSRC Centre for Doctoral Training in Partial Differential Equations [grant number EP/L015811/1] and the National University of Defense Technology.
}}}
\author{James Dalby\thanks{Department of Mathematics, University of Strathclyde, UK
  (\email{james.dalby@strath.ac.uk}, \email{apala.majumdar@strath.ac.uk}).}
\and Patrick E. Farrell\thanks{Mathematical Institute, University of Oxford, UK 
(\email{patrick.farrell@maths.ox.ac.uk}).}
\and Apala Majumdar\footnotemark[2]
\and Jingmin Xia \thanks{College of Meteorology and Oceanography, National University of Defense Technology, China
  (\email{jingmin.xia@nudt.edu.cn}).}
}
\begin{document}

\maketitle

\begin{abstract}
    We study a model system with nematic and magnetic order, within a channel geometry modelled by an interval, $[-D, D]$. The system is characterised by a tensor-valued nematic order parameter $\Qvec$ and a vector-valued magnetisation $\Mvec$, and the observable states are modelled as stable critical points of an appropriately defined free energy which includes a nemato-magnetic coupling term, characterised by a parameter $c$.
    We (i) derive $L^\infty$ bounds for $\Qvec$ and $\Mvec$; (ii) prove a uniqueness result in specified parameter regimes; 
    (iii) analyse order reconstruction solutions, possessing domain walls, and their stabilities as a function of $D$ and $c$ and (iv) perform numerical studies that elucidate the interplay of $c$ and $D$ for multistability.
\end{abstract}

\begin{keywords}
  ferronematics, bifurcation analysis, stability, liquid crystals
\end{keywords}

\begin{AMS}
  34D20, 34C23, 76A15
\end{AMS}

\section{Introduction}
\label{sec:intro}
Nematic liquid crystals (NLCs) are classical examples of meso-phases that combine fluidity with long-range orientational order \cite{deGennes}.
NLC molecules tend to align, on average, along certain locally preferred directions, referred to as nematic \emph{directors}.
NLCs are anisotropic materials with a direction-dependent response to light and external fields, and are thus used in a range of electro-optical devices, e.g., the multi-billion dollar liquid crystal display industry \cite{lagerwallreview}.
Moreover, NLCs typically rely on their dielectric anisotropy, i.e., directional response to external electric fields, for applications.
Their responses to external magnetic fields are much weaker (perhaps seven orders of magnitude smaller) than their dielectric response \cite{stewart} and consequently, nemato-magnetic coupling has been poorly exploited for NLC applications, e.g., sensors, displays, microfluidics etc.

In the pioneering work of \cite{brochardgennes}, Brochard and de Gennes suggested that a suspension of magnetic nanoparticles (MNPs) in a NLC host could induce a spontaneous magnetisation without any external magnetic fields, and substantially enhance nemato-magnetic material response.
This new class of materials with both nematic and magnetic order is referred to as \emph{ferronematics}, with notable theoretical contributions by \cite{burylov, calderer2014} and experimental realisations by \cite{cladis}, later by \cite{mertelj-2013-article} where the crucial factors for the stability of ferronematic suspensions are identified.
Ferronematics have tremendous potential, both theoretically and for meta-materials, topological materials, and nano-systems, to name a few \cite{smalyukh}.
Of particular interest are multistable ferronematic systems that support multiple stable ferronematic states, without external magnetic fields. This is analogous to multistable nematic systems, such as bistable liquid crystal displays, but ferronematics have additional magnetic order that allows for greater complexity of solution landscapes. This work is a first step in the rigorous analytical and numerical study of multistable one-dimensional ferronematic systems, without external magnetic fields. Magnetic fields could be used to switch between the distinct stable ferronematic states, to control non-equilibrium behaviour for such multistable systems.

In this work, we study a dilute suspension of MNPs in a one-dimensional NLC-filled channel (of width $D$). 
We assume a uniform distribution of MNPs (much smaller than the physical domain dimensions) such that the average distance between the MNPs is much larger than the MNP size, and the total volume fraction of MNPs is small.
These MNPs generate a spontaneous magnetisation even without any external magnetic fields, by means of the NLC-MNP interactions. Thus, the system has two order parameters: (i) a reduced Landau--de Gennes (LdG) nematic tensor parameter $\Qvec$ with two degrees of freedom, that contains information about the nematic directors and the degree of nematic ordering and (ii) a magnetisation vector $\Mvec$ generated by the suspended MNPs.

Following the methods in \cite{bisht-2019-article, bisht-2020-article, calderer2014}, we model the physically observable $(\Qvec, \Mvec)$-profiles as minimisers of an appropriately defined ferronematic free energy.
This free energy consists of three contributions: a LdG-type nematic energy, a magnetisation energy and a nemato-magnetic coupling energy.
In fact, the free energy essentially builds on the energy in \cite{burylov}, with two differences: we describe the nematic state by a LdG-type order parameter instead of a unit-vector as in \cite{burylov}, and we add the magnetisation energy to essentially regularise the problem, i.e., the magnetisation energy penalises sharp jumps or inhomogeneities in $\Mvec$.
The LdG tensor order parameter is well suited to capture fractional point defects as in \cite{bisht-2020-article} and biaxiality in three dimensions, i.e., primary and secondary nematic directors which are outside the scope of a purely vector-based model as in \cite{burylov}. Further, as shown in \cite{calderer2014} and \cite{canevari2020design}, in the dilute limit, the microscopic details of the MNP properties (shape, size, anchoring on the MNP surfaces, volume fraction etc.) and the NLC-MNP interactions are homogenised to yield the nemato-magnetic coupling energy, characterised by a coupling parameter $c > 0$. The coupling energy dictates the co-alignment between the nematic director and $\Mvec$ and for positive $c$ as in our manuscript, this coupling energy favours that the director and $\Mvec$ be parallel to each other.
There are four key phenomenological parameters in the ferronematic free energy as in \cite{bisht-2020-article}: $l_1$ and $l_2$ which depend on elastic constants, the temperature and are inversely proportional to $D^2$; the nemato-coupling parameter $c$; and a scaling parameter $\xi$ that weighs the relative strength of the nematic and magnetic energies. For dilute systems, $\xi$ is typically small.
In addition, we prescribe conflicting Dirichlet conditions for $\Qvec$ and $\Mvec$, that necessarily generate inhomogeneous ferronematic profiles. The physically relevant choices of the boundary conditions for $\Mvec$ are unclear, but we expect our conclusions to be qualitatively unchanged with Neumann boundary conditions for $\Mvec$.


The pure nematic case, i.e., when $c=0$, is well-understood; see for example \cite{canevari_majumdar_spicer2016, lamy-2014-article}.
We study how the solution landscapes for $c=0$ are perturbed by the nemato-magnetic coupling energy in the dilute limit, for positive $c$.
In the supplementary material, we compute the \emph{vacuum manifold} i.e., minimisers of the bulk potential, which is the sum of the Ginzburg--Landau energies for $\Qvec$ and $\Mvec$ and a nemato-magnetic coupling energy, and the bulk minimisers depend on $c$ and $\xi$. The bulk minimisers are the spatially homogeneous profiles that would be observed without conflicting boundary conditions or geometrical frustration and they play a crucial role in our study of this one-dimensional spatially inhomogeneous problem of ferronematics in channel geometries. We next prove the existence of minimisers of the ferronematic free energy (\cref{thm:existance}) for this model problem, subject to the conflicting Dirichlet boundary conditions for $\Qvec$ and $\Mvec$. The minimisers (local and global) are candidates for physically observable configurations. We then prove a non-trivial  maximum principle (\cref{thm:maximum-principle}) for all critical points of the ferronematic free energy, and we obtain an explicit $L^\infty$ bound for the critical points $(\Qvec, \Mvec)$ in terms of $c$. This bound strongly depends on our analysis of the vacuum manifold. In particular, this bound reduces to the familiar uncoupled bound for $c=0$ in \cite{canevari_majumdar_spicer2016}, with a linear perturbation in $c$ for small $c$. For large $c$, the bounds grow linearly with $c$. This captures the relationship between the $c=0$ and $c>0$ cases to some extent. 
Subsequently, in \cref{thm:uniqness} we prove that the ferronematic energy has a unique critical point, and hence minimiser, for $D$ sufficiently small, i.e., for narrow channels, as for the $c=0$ case in \cite{lamy-2014-article}. Of course, the critical $D$ depends on $c$. These crucial analytic results hold in two and three dimensions too, and are hence of general interest.

In the pure nematic case ($c = 0$), the model problem admits a unique order reconstruction (OR) solution for $D \ll c_2 \xi_n$, for some positive constant $c_2$ independent of model parameters, and where $\xi_n$ is the nematic correlation length \cite{lamy-2014-article}. OR solutions are special since they support polydomains, separated by domain walls, such that the nematic director is constant in each polydomain and jumps across the domain wall. These polydomains are stable for $D$ small enough, and become unstable as $D$ increases.
The qualitative features are unchanged in the ferronematic case, where profiles have four degrees of freedom: two for $\Qvec$ and two for $\Mvec$. Here, an OR solution exists for all $D$ (\cref{thm:existence_OR}), with distinct domain walls (defined by $\Qvec= 0$ and $\Mvec = 0$) that separate distinctly ordered polydomains for both the nematic director and the magnetisation vector. The OR solutions are reduced solutions with only two degrees of freedom and the polydomains are a necessary consequence of the Dirichlet boundary conditions.
Essentially, the polydomains have a constant non-zero $(\Qvec, \Mvec)$-profile and the profile jumps across a domain wall, which is the surface discontinuity in the three dimensional channel setting.
Moreover, OR solutions are globally stable for $D \ll c_1 \frac{\xi_n}{\sqrt{c_0^2 + c^2}}$ and thus $c$ shrinks their domain of stability.
In \cref{thm:instability-OR}, as $D$ increases, we show that OR solutions become unstable by means of a $\Gamma$-convergence argument and second variation analysis.
Next, we study the full problem with four degrees of freedom. As $D$ increases, the ferronematic energy minimisers lose the polydomain structures and the nematic director and the magnetisation vector rotate smoothly throughout the channel. For large $D$, these minimisers exploit the full four degrees of freedom, and we have boundary layers because the boundary conditions are not consistent with the vacuum manifold. This is further corroborated by numerical experiments and computations of bifurcation diagrams, for two specific values of $c$. As $D$ increases, we observe pitchfork bifurcations from the OR solutions and multiple stable ferronematic equilibria for large $D$, demonstrating an example of a multistable ferronematic system.

The nemato-magnetic coupling introduces additional possibilities for the interplay between nematic and magnetic domain walls (absent when $c=0$), new defect structures, and novel bifurcations accompanied by novel solution branches for large $D$. In particular, the ferronematic OR solutions illustrate how we can tailor the locations and multiplicity of domain walls by varying $D$ and $c$, a novel aspect of our study. We do not address these questions fully in this manuscript but our work will support and guide future studies on these lines. The paper is organised as follows. In the next section, we describe the ferronematic model and give some qualitative results of general interest, e.g., existence, uniqueness 
etc.
We then consider the OR model in \cref{sec:OR} and provide numerical results in \cref{sec:num} to verify our theoretical analysis. Finally, some conclusions and perspectives are summarised in \cref{sec:conclusions}.

\section{Model Problem}
\label{sec:fullproblem}

We consider a dilute ferronematic suspension sandwiched inside the three dimensional channel $\tilde{\Omega} = [-L, L] \times [-D, D] \times [0, G]$, where $L \gg D$, $L$ is the length of the channel, $D$ is the channel width and $G$ is the channel height. We impose strong anchoring on the $xz$-planes and free boundary conditions on the $yz$- and $xy$-planes.
From a modelling perspective, we assume that the structural profile is invariant across the height of the channel, and along the length of the channel and restrict ourselves to a one-dimensional channel geometry: $\Omega = [-D, D]$ in what follows.
As noted from \cref{sec:intro}, the ferronematic suspension is described by two order parameters: a symmetric, traceless $2\times 2$ matrix $\Qvec$, i.e., $\Qvec\in S_0\coloneqq\{\Qvec\in\mathbb{M}^{2\times 2}: Q_{ij}=Q_{ji},Q_{ii}=0\}$, and a two-dimensional vector, $\Mvec = \left(M_1, M_2 \right)$.
Here, $\mathbb{M}^{2\times 2}$ denotes all $2\times 2$ matrices. 
The nematic order parameter $\Qvec$ can be written as
\begin{equation}
    \label{eq:2}
    \Qvec = s (2 \nvec\otimes \nvec - \mathbf{I}),
\end{equation}
where $s$ is a scalar order parameter, and $\nvec$ is the nematic director (a unit-vector describing the direction of orientational ordering in the $xy$-plane) and $\mathbf{I}$ is the $2\times 2$ identity matrix.
Moreover, $s$ is interpreted as the degree of the orientational order about $\nvec$, so that the nodal sets of $s$ (i.e., where $s=0$) define nematic defects in the $xy$-plane.
We denote the two independent components of $\Qvec$ by $Q_{11}$ and $Q_{12}$ such that
        \begin{equation*}
            Q_{11} = s\cos 2 \vartheta,\quad Q_{12} = s \sin 2\vartheta,
        \end{equation*}
        when $\nvec = \left(\cos \vartheta, \sin \vartheta \right)$ and $\vartheta$ denotes the angle between $\nvec$ and the horizontal axis.
        To avoid writing $\Qvec$ in the matrix form $\bigl[\begin{smallmatrix} Q_{11} & Q_{12}\\ Q_{12} & -Q_{11}\end{smallmatrix}\bigr]$, we henceforth label $\Qvec$ in terms of its two independent components $(Q_{11}, Q_{12})$, when this causes no confusions. We therefore define the vector norm, $|\Qvec|=\sqrt{Q^2_{11}+Q^2_{12}}$, as opposed to a matrix norm. Similarly, we define $|\Mvec|=\sqrt{M_1^2+M_2^2}$. 

Following the methods in \cite{mertelj-2013-article, bisht-2019-article}, the ferronematic free energy is given by the sum of three energies for low temperatures: a LdG type nematic energy for $\Qvec$, a magnetisation energy for $\Mvec$ and a coupling energy between $\Qvec$ and $\Mvec$. For dilute ferronematic suspensions, the MNP interactions are ``small'' and the NLC-MNP interactions are absorbed by the coupling energy, which can be viewed as the homogenised version of a Rapini--Papoular type surface anchoring energy on the MNP surfaces that dictates the co-alignment between $\nvec$ and $\Mvec$ \cite{calderer2014, canevari2020design}. We adopt the rescalings as in \cite{bisht-2019-article}, so that the rescaled domain is $\Omega = [-1, 1]$, and the total rescaled and dimensionless ferronematic free energy is
\begin{equation}
    \label{eq:4}
    \begin{aligned}
        F(Q_{11}, Q_{12}, M_1, M_2)
    \coloneqq & \int_{\Omega} \Bigg\{\frac{l_1}{2}\left[\left( \frac{\mathrm{d} Q_{11}}{\mathrm{d} y}\right)^2 + \left(\frac{\mathrm{d} Q_{12}}{\mathrm{d} y}\right)^2 \right] + \left( Q_{11}^2 + Q_{12}^2 - 1 \right)^2 \\
                                          & + \frac{\xi l_2}{2} \left[ \left(\frac{\mathrm{d} M_1}{\mathrm{d} y}\right)^2 + \left(\frac{\mathrm{d} M_2}{\mathrm{d} y}\right)^2 \right] + \frac{\xi}{4}\left(M_1^2 + M_2^2 - 1 \right)^2 \\
                                          & - cQ_{11}\left(M_1^2 - M_2^2 \right) - 2c Q_{12}M_1 M_2 \Bigg\} ~\mathrm{d}y.
    \end{aligned}
\end{equation}
Here, $l_1>0$ and $l_2>0$ are scaled elastic constants (inversely proportional to $D^2$, i.e., the squared channel width).
Substituting \cref{eq:2} into the coupling energy, we observe that
\begin{equation*}
    \left(- cQ_{11}\left(M_1^2 - M_2^2 \right) - 2c Q_{12}M_1 M_2 \right) \propto -c \left(\nvec \cdot \Mvec \right)^2.
\end{equation*}

We only focus on positive coupling, i.e., $c>0$ in this work so that this coupling energy favours $\nvec \cdot \Mvec = \pm 1$. We further denote the bulk energy density by
\begin{equation}
    \label{eq:f}
    \begin{aligned}
        f(Q_{11},Q_{12},M_1,M_2) &\coloneqq \left( Q_{11}^2 + Q_{12}^2 - 1 \right)^2 + \frac{\xi}{4}\left(M_1^2 + M_2^2 - 1 \right)^2\\
               &\quad - cQ_{11}\left(M_1^2 - M_2^2 \right) - 2c Q_{12}M_1 M_2.
    \end{aligned}
\end{equation}

Regarding boundary conditions, we work with Dirichlet conditions for $\Qvec$ and $\Mvec$ on the boundaries $y=\pm 1$ i.e.,
\begin{equation}
\label{eq:5}
\begin{aligned}
    & Q_{11}\left(-1 \right) = M_1 \left(-1 \right) = 1, \\
    & Q_{12}(-1) = Q_{12}(1) = M_2(-1) = M_2 (1) = 0, \\
    & Q_{11}\left(1 \right) = M_1 \left(1 \right) = -1.
\end{aligned}
\end{equation}

Here, the boundary conditions for $\Qvec$ correspond to $\nvec = (1, 0)$ on $y=-1$ and $\nvec = (0,1)$ on $y=1$, hence, we have planar boundary conditions on $y=-1$ and normal/homeotropic boundary conditions on $y=+1$. Furthermore, the boundary conditions for $\Mvec$ describe a $\pi$-rotation between the bounding plates, $y=\pm 1$.

The admissible space is given by
\begin{multline}
        \label{eq:admissible_space}
     \mathcal{A} = 
    \left\{\Qvec \in W^{1,2}\left(\Omega; S_0 \right), \Mvec \in W^{1,2}\left(\Omega; \mathbb{R}^2 \right)\right.,\\
    \left.\textrm{$\Qvec$ and $\Mvec$ satisfy the boundary conditions \cref{eq:5}} \right\}.
\end{multline}
    The Sobolev space $W^{1,2}$ is the space of all square-integrable $\left(\Qvec, \Mvec \right)$ with square-integrable first weak derivatives, which is a standard choice for such variational problems.
The stable, physically relevant and potentially observable $(\Qvec, \Mvec)$-profiles are local or global energy minimisers of the full energy \cref{eq:4} subject to the boundary conditions in \cref{eq:5}, in $\mathcal{A}$.
They are in fact, classical solutions of the associated Euler--Lagrange equations \cite{bisht-2019-article}
\begin{subequations}
    \label{eq:euler-lagrange}
\begin{align}
    & l_1 \frac{\mathrm{d}^2 Q_{11}}{\mathrm{d}y^2} = 4 Q_{11}(Q_{11}^2 + Q_{12}^2 - 1) - c\left(M_1^2 - M_2^2 \right),\label{eq:Q11} \\
    & l_1 \frac{\mathrm{d}^2 Q_{12}}{\mathrm{d}y^2} = 4 Q_{12}(Q_{11}^2 + Q_{12}^2 - 1) - 2cM_1 M_2,\label{eq:Q12}\\
    & \xi l_2 \frac{\mathrm{d}^2 M_{1}}{\mathrm{d}y^2} = \xi M_1 \left(M_1^2 + M_2^2 - 1 \right) - 2c Q_{11}M_1 - 2c Q_{12}M_2,\label{eq:M1} \\
    & \xi l_2 \frac{\mathrm{d}^2 M_{2}}{\mathrm{d}y^2} = \xi M_2 \left(M_1^2 + M_2^2 - 1 \right) + 2c Q_{11} M_2 - 2c Q_{12} M_1\label{eq:M2},
\end{align}
\end{subequations}
The first result concerns a brief proof of the existence of a global minimiser of the free energy (\ref{eq:4}), in $\mathcal{A}$.

\begin{theorem}\label{thm:existance}
    For all positive values of $(l_1, l_2, c, \xi)$, there exists at least one minimiser $\left(Q_{11}^*, Q_{12}^*, M_1^*, M_2^* \right)$ of the ferronematic free energy \cref{eq:4} in the admissible space
    \cref{eq:admissible_space}. 
    Moreover, this minimiser is a (classical) solution of the Euler--Lagrange equations \cref{eq:Q11}-\cref{eq:M2} subject to the boundary conditions \cref{eq:5}.
\end{theorem}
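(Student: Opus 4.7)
The plan is to apply the direct method of the calculus of variations to $F$ on $\mathcal{A}$. The only structural obstacle is that the coupling term in \cref{eq:4} is cubic in $(\Qvec, \Mvec)$ and sign-indefinite, so coercivity of $F$ is not immediate; everything else is routine for a Landau--de Gennes-type energy in one dimension.

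First, I would establish a uniform lower bound on $F$ and uniform $W^{1,2}$ control of any minimising sequence. Using the pointwise estimate $|cQ_{11}(M_1^2-M_2^2) + 2cQ_{12}M_1M_2| \le c|\Qvec||\Mvec|^2$ and Young's inequality with a small parameter $\epsilon < \xi/2$,
\[
c|\Qvec||\Mvec|^2 \le \tfrac{\epsilon}{2}|\Mvec|^4 + \tfrac{c^2}{2\epsilon}|\Qvec|^2,
\]
the $|\Mvec|^4$ contribution is absorbed into $\frac{\xi}{4}(|\Mvec|^2-1)^2 = \frac{\xi}{4}|\Mvec|^4 - \frac{\xi}{2}|\Mvec|^2 + \frac{\xi}{4}$, and the residual quadratic terms in $|\Qvec|$ and $|\Mvec|$ are absorbed into $(|\Qvec|^2 - 1)^2$ at the cost of an additive constant. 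Combined with the non-negativity of the elastic (Dirichlet) terms, this yields both $F \ge -C$ on $\mathcal{A}$ and uniform bounds on $\|\Qvec'\|_{L^2}$, $\|\Mvec'\|_{L^2}$, $\|\Qvec\|_{L^4}$, $\|\Mvec\|_{L^4}$ along any minimising sequence; the Dirichlet boundary data \cref{eq:5} then upgrade these to a uniform $W^{1,2}$ bound.

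Second, I extract a subsequence $\{(Q_{11}^n,Q_{12}^n,M_1^n,M_2^n)\}$ converging weakly in $W^{1,2}(\Omega;\R^4)$ to some $(Q_{11}^*,Q_{12}^*,M_1^*,M_2^*)$. Because $\Omega$ is one-dimensional, the compact embedding $W^{1,2}(\Omega) \hookrightarrow C^{0,1/2}(\bar\Omega)$ upgrades this to uniform convergence, which preserves the boundary conditions \cref{eq:5} (so the limit lies in $\mathcal{A}$) and passes the potential and coupling integrals to the limit, since these are continuous functions of $(\Qvec, \Mvec)$. The elastic part of $F$ is convex quadratic in the gradients and hence weakly lower semicontinuous, giving
\[
F(Q_{11}^*,Q_{12}^*,M_1^*,M_2^*) \le \liminf_n F(Q_{11}^n,Q_{12}^n,M_1^n,M_2^n) = \inf_\mathcal{A} F,
\]
so the weak limit is a global minimiser.

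Finally, the minimiser satisfies the Euler--Lagrange system \cref{eq:Q11}-\cref{eq:M2} in the weak sense. Since the right-hand sides are polynomial in $(\Qvec, \Mvec)$ and the Sobolev embedding already places the minimiser in $C^{0,1/2}(\bar\Omega)$, a standard bootstrap reading each equation as $u'' = f(y)$ with progressively more regular $f$ promotes the solution to $C^\infty(\Omega)$, so it is classical. The hard part of the whole argument is the coercivity step: without the Young's inequality trick, the cubic sign-indefinite coupling could a priori drive $F$ to $-\infty$ and the direct method would collapse.
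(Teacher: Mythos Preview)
Your proof is correct and follows essentially the same approach as the paper: both establish coercivity by using Young's inequality to absorb the cubic coupling term into the quartic potentials, then invoke the direct method and elliptic regularity. Your version is arguably tidier---you bound the coupling directly by $c|\Qvec||\Mvec|^2$ and make explicit use of the one-dimensional embedding $W^{1,2}\hookrightarrow C^{0,1/2}$ for compactness and trace preservation, whereas the paper uses a slightly more awkward intermediate bound via $(Q_{11}+Q_{12})$ and cites references for the regularity step---but the underlying argument is the same.
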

\begin{remark}
    For brevity of notations, we omit $(\Omega;S_0)$ and $(\Omega;\mathbb{R}^2)$ in the Sobolev spaces hereafter, whenever it causes no confusions.
\end{remark}
\begin{proof}
    The admissible space \cref{eq:admissible_space} is nonempty as $(Q_{11},Q_{12},M_1,M_2)= (-y,0,-y,0) \in \mathcal{A}$.
    The ferronematic energy \cref{eq:4} is quadratic and thus, convex in the gradient of all four state variables $(Q_{11}, Q_{12}, M_1, M_2)$ and hence, lower semicontinuous \cite{evans-2010-book}.
    Furthermore, the coupling energy can be decomposed as follows
\begin{align*}
    - cQ_{11}\left(M_1^2 - M_2^2 \right) - 2c Q_{12}M_1 M_2
    &\geq-c(M_1^2+M_2^2)(|Q_{11}|+|Q_{12}|) \\
    &\geq-\frac{c}{2}\left(\epsilon\left(|Q_{11}|+|Q_{12}|\right)^2+\frac{1}{\epsilon}\left(M_1^2+M_2^2\right)^2\right)\\
    &\geq-\frac{c}{2}\left(2\epsilon\left(Q_{11}^2+Q_{12}^2\right)+\frac{1}{\epsilon}\left(M_1^2+M_2^2\right)^2\right),
\end{align*} where $\epsilon >0$ is arbitrary.
Hence, the energy density is bounded from below as
\begin{align*}
    &\frac{l_1}{2} \left[\left( \frac{\mathrm{d} Q_{11}}{\mathrm{d} y}\right)^2 + \left(\frac{\mathrm{d} Q_{12}}{\mathrm{d} y}\right)^2 \right] + \left( Q_{11}^2 + Q_{12}^2 - 1 \right)^2  + \frac{\xi l_2}{2} \left[ \left(\frac{\mathrm{d} M_1}{\mathrm{d} y}\right)^2 + \left(\frac{\mathrm{d} M_2}{\mathrm{d} y}\right)^2 \right] \\
    &\quad + \frac{\xi}{4}\left(M_1^2 + M_2^2 - 1 \right)^2  - cQ_{11}\left(M_1^2 - M_2^2 \right) - 2c Q_{12}M_1 M_2 \\
    &\geq\frac{l_1}{2}\left[\left( \frac{\mathrm{d} Q_{11}}{\mathrm{d} y}\right)^2 + \left(\frac{\mathrm{d} Q_{12}}{\mathrm{d} y}\right)^2 \right] + \frac{\xi l_2}{2} \left[ \left(\frac{\mathrm{d} M_1}{\mathrm{d} y}\right)^2 + \left(\frac{\mathrm{d} M_2}{\mathrm{d} y}\right)^2\right]\\
    &\quad + \left[Q_{11}^2+Q_{12}^2 -\left(1+\frac{c\epsilon}{2}\right) \right]^2+ \left(\frac{\xi\epsilon-2c}{4\epsilon} \right) \left(M_1^2+M_2^2 -\frac{\epsilon\xi}{\xi\epsilon-2c}\right)^2\\
    &\quad - \left( c\epsilon + \frac{c^2\epsilon^2}{4} + \frac{c\xi}{2(\xi\epsilon-2c)}\right),
\end{align*}
and thus the full energy \cref{eq:4} is coercive provided $\epsilon>\frac{2c}{\xi}$.
The existence of a minimiser in the admissible space $\mathcal{A}$ therefore follows by the direct method in the calculus of variations \cite{evans-2010-book}. We can follow the arguments from elliptic regularity in \cite{bethuel-1993-article} and \cite{zarnescu-2010-article} to deduce that minimisers, and in fact all critical points of the free energy, are classical solutions of \cref{eq:Q11}-\cref{eq:M2}.
\end{proof}

\subsection{Maximum principle and uniqueness results}
    For simplicity and brevity, we take $l_1=l_2=l$ and $\xi=1$ hereafter.
    The cases of $l_1 \neq l_2$ and $\xi\neq 1$ can be tackled using similar mathematical methods, although $\xi$ is necessarily small for dilute ferronematic suspensions.

\begin{theorem}
    \label{thm:maximum-principle}
    (Maximum principle)
    There exists an $L^\infty$ bound for the solutions, $(Q_{11}, Q_{12}, M_1, M_2 )$ of the system \cref{eq:Q11}-\cref{eq:M2}, subject to the boundary conditions \cref{eq:5}.
    Specifically,
    \begin{equation}
        Q_{11}^2(y)+Q_{12}^2(y)\leq (\rho^*)^2,\;M_1^2(y)+M_2^2(y)\leq 1+2c\rho^*\quad\forall y\in[-1,1],
        \label{eq:bound} 
    \end{equation}
    where $\rho^*$ is given by
    \begin{equation}
    \rho^*=\left(\frac{c}{8}+\sqrt{\frac{c^2}{64}-\frac{1}{27}\left(1+\frac{c^2}{2}\right)^3}\right)^\frac{1}{3}+\left(\frac{c}{8}-\sqrt{\frac{c^2}{64}-\frac{1}{27}\left(1+\frac{c^2}{2}\right)^3}\right)^\frac{1}{3} \label{eq:rho_max}.
\end{equation}
\end{theorem}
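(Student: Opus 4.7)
The strategy is a classical interior maximum principle applied to the scalar quantities $|\Qvec|^2 = Q_{11}^2 + Q_{12}^2$ and $|\Mvec|^2 = M_1^2 + M_2^2$, combined through the cubic \cref{eq:cubic1} that already characterises the homogeneous bulk minimiser $(\rho,\sigma) = (\text{\cref{eq:rho1}}, \text{\cref{eq:sigma1}})$. The objective is to reduce the two coupled pointwise bounds to a single cubic inequality in $\rho = |\Qvec|_{\max}$, whose largest real root is $\rho^*$.

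First I would differentiate. Expanding $\tfrac{1}{2}(|\Qvec|^2)'' = Q_{11}Q_{11}'' + Q_{12}Q_{12}'' + (Q_{11}')^2 + (Q_{12}')^2$ and substituting \cref{eq:Q11}-\cref{eq:Q12} gives
\begin{equation*}
    \frac{l}{2}(|\Qvec|^2)'' = 4|\Qvec|^2\bigl(|\Qvec|^2 - 1\bigr) - c\bigl[Q_{11}(M_1^2 - M_2^2) + 2Q_{12}M_1M_2\bigr] + l\bigl[(Q_{11}')^2 + (Q_{12}')^2\bigr].
\end{equation*}
In the polar variables \cref{eq:substitution}, the bracketed coupling term equals $\rho\sigma^2\cos(2\phi-\theta)$, so it is bounded in absolute value by $|\Qvec|\,|\Mvec|^2$. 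An entirely analogous computation from \cref{eq:M1}-\cref{eq:M2} gives
\begin{equation*}
    \frac{l}{2}(|\Mvec|^2)'' = |\Mvec|^2\bigl(|\Mvec|^2 - 1\bigr) - 2c\bigl[Q_{11}(M_1^2 - M_2^2) + 2Q_{12}M_1M_2\bigr] + l\bigl[(M_{1}')^2 + (M_{2}')^2\bigr].
\end{equation*}

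Next, let $y_1,y_2 \in [-1,1]$ be points where $|\Qvec|^2$ and $|\Mvec|^2$ attain their maxima, and set $\rho := |\Qvec(y_1)|$, $\sigma := |\Mvec(y_2)|$. If both maxima lie in the interior, the second-derivative test applied to the two identities above (discarding the non-negative gradient terms) together with the uniform coupling bound yields
\begin{align*}
    4\rho^2(\rho^2-1) &\leq c\rho\,|\Mvec(y_1)|^2 \leq c\rho\sigma^2,\\
    \sigma^2(\sigma^2-1) &\leq 2c\,|\Qvec(y_2)|\,\sigma^2 \leq 2c\rho\sigma^2.
\end{align*}
The $\Mvec$-inequality (assuming $\sigma > 0$, else the target bound is trivial) reduces to $\sigma^2 \leq 1+2c\rho$. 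Inserting this into the $\Qvec$-inequality and dividing by $\rho > 0$ produces
\begin{equation*}
    \rho^3 - \left(1+\frac{c^2}{2}\right)\rho - \frac{c}{4} \leq 0,
\end{equation*}
which is exactly \cref{eq:cubic1} with $\xi=1$. Since the cubic on the left-hand side tends to $+\infty$ as $\rho \to \infty$, $\rho$ cannot exceed its largest real root, which is precisely $\rho^*$ from \cref{eq:rho_max} (the $k=1$ branch of \cref{eq:rho1}). Hence $|\Qvec|^2 \leq (\rho^*)^2$, and the $\Mvec$-inequality then gives $|\Mvec|^2 \leq 1 + 2c\rho^*$.

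It remains to treat the case when a maximum is attained on the boundary $y=\pm 1$, where the Dirichlet data \cref{eq:5} force $|\Qvec|^2 = |\Mvec|^2 = 1$; since $\rho^* \geq 1$ (the defining cubic at $\rho = 1$ equals $-c^2/2 - c/4 \leq 0$ for $c \geq 0$), both target bounds are immediate. The principal obstacle I foresee is identifying the tight absolute bound $|\cos(2\phi-\theta)| \leq 1$ for the coupling cross-term: any coarser estimate (say, via Young's inequality) would land on a strict quartic inequality in $\rho$ that fails to match \cref{eq:cubic1} and would forfeit the clean agreement between the extremal $L^\infty$-bound and the homogeneous bulk minimiser amplitude.
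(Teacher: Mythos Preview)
Your proof is correct and follows essentially the same approach as the paper: derive the identities for $(|\Qvec|^2)''$ and $(|\Mvec|^2)''$ from the Euler--Lagrange system, apply the interior second-derivative test at the respective maxima, bound the coupling cross-term via $|\cos(2\phi-\theta)|\le 1$, chain through $\sigma^2 \le 1+2c\rho$ into the $\Qvec$-inequality, and conclude via the cubic \cref{eq:cubic1}. Your explicit treatment of the boundary case (checking $\rho^*\ge 1$ by evaluating the cubic at $\rho=1$) is a small addition the paper leaves implicit.
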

\begin{proof}
Assume that, $|\Qvec|=\sqrt{Q_{11}^2+Q_{12}^2}$, and, $|\Mvec|=\sqrt{M_1^2+M_2^2}$, attain their maxima at two distinct points $y_1, y_2\in \left(-1,1 \right)$, respectively, then we have
$$
\frac{\mathrm{d}^2}{\mathrm{d}y^2} \left(\frac{1}{2}|\Qvec|^2\right)(y_1)\leq 0
\text{ and }
\frac{\mathrm{d}^2}{\mathrm{d}y^2} \left(\frac{1}{2}|\Mvec|^2\right)(y_2)\leq 0.
$$
Multiplying \cref{eq:Q11} by $Q_{11}$, \cref{eq:Q12} by $Q_{12}$, adding the resulting equations,
and using the identity $\frac{\mathrm{d}^2}{\mathrm{d}y^2}\left(\frac{1}{2}|\Qvec|^2\right)= \frac{\mathrm{d}^2 Q_{11}}{\mathrm{d}y^2} Q_{11}+ \frac{\mathrm{d}^2 Q_{12}}{\mathrm{d}y^2} Q_{12}+\left(\frac{\mathrm{d}Q_{11}}{\mathrm{d}y}\right)^2+\left(\frac{\mathrm{d} Q_{12}}{\mathrm{d}y}\right)^2$, we obtain the necessary condition 
\begin{equation}
    \left[
    4\left(Q_{11}^2+Q_{12}^2\right)(Q_{11}^2+Q_{12}^2-1)-c\left(Q_{11}(M_1^2-M_2^2)+2Q_{12}M_1M_2\right)
\right]
\biggr\rvert_{y=y_1} \leq 0. \label{eq:Q_condition}
\end{equation}
Similarly, we have
\begin{equation}
    \left[\left(M_1^2+M_2^2\right)\left(M_1^2+M_2^2-1\right)-2c\left(Q_{11}(M_1^2-M_2^2)+2Q_{12}M_1M_2\right)
\right]
\biggr\rvert_{y=y_2}\leq 0 \label{eq:M_condition}.
\end{equation}
Substituting
\begin{equation}
    \label{eq:substitution}
    \begin{aligned}
        &Q_{11}=\rho\cos(\theta), Q_{12}=\rho\sin(\theta),\\
        &M_1=\sigma\cos(\phi), M_2=\sigma\sin(\phi),
    \end{aligned}
\end{equation}
with $\rho=|\Qvec|\geq 0$ and $\sigma=|\Mvec|\geq 0$, with arbitrary $\theta$ and $\phi$, into \cref{eq:Q_condition} and \cref{eq:M_condition}, we obtain
\begin{align}
0 \geq \left[4\rho^2(\rho^2-1)-c\rho\sigma^2\cos(\theta-2\phi)\right]\bigg\rvert_{y=y_1} \geq
\left[4\rho^2(\rho^2-1)-c\rho\sigma^2\right]\bigg\rvert_{y=y_1} &, \nonumber \\
    \implies  \left(\rho^3-\rho-\frac{c\sigma^2}{4}\right)\bigg\rvert_{y=y_1} \leq 0 &,\label{eq:cubicrholhs}
\end{align}
and
\begin{align*}
0 \geq \left[\sigma^2(\sigma^2-1)-2c\rho\sigma^2\cos(\theta-2\phi)\right]\bigg\rvert_{y=y_2} \geq
\left[\sigma^2(\sigma^2-1)-2c\rho\sigma^2\right]\bigg\rvert_{y=y_2} &,\\
\implies \left(\sigma^2- 1 -2c\rho\right)\bigg\rvert_{y=y_2} \leq 0 &,
\end{align*}
respectively.
We then immediately deduce that $\sigma^2(y)\leq 1+2c\rho(y_2)$ for all $y\in[-1,1]$, as $|\Mvec|$ attains its maximum at $y_2$, and since $\rho(y_1)\geq\rho(y_2)$ (as $|\Qvec|$ attains its maximum at $y_1$), we further have $\sigma^2(y_1)\leq 1+2c\rho(y_1)$. 
 Using this in \cref{eq:cubicrholhs}, we get
\begin{equation*}
    0 \geq \left(\rho^3-\rho-\frac{c\sigma^2}{4}\right)\bigg\rvert_{y=y_1}\ge
    \left(\rho^3-\rho\left(1+\frac{c^2}{2}\right)-\frac{c}{4}\right)\bigg\rvert_{y=y_1},
\end{equation*}
which holds provided that $\rho$ is less than or equal to the largest positive root of the cubic polynomial, $\rho^3-\rho\left(1+\frac{c^2}{2}\right)-\frac{c}{4}$.
From the detailed calculations in the supplementary materials, the largest positive root is given by
\begin{equation*}
    \rho=
    \left(\frac{c}{8}+\sqrt{\frac{c^2}{64}-\frac{1}{27}\left(1+\frac{c^2}{2}\right)^3}\right)^\frac{1}{3}+\left(\frac{c}{8}-\sqrt{\frac{c^2}{64}-\frac{1}{27}\left(1+\frac{c^2}{2}\right)^3}\right)^\frac{1}{3},
\end{equation*}
and thus $\rho(y_1) \leq \rho^*$.
The $L^\infty$ bounds for $\rho$ and $\sigma$ are an immediate consequence, i.e.,
\begin{equation*}
	\rho(y)\leq\rho^*,\;\sigma^2(y)\leq 1+2c\rho^*\quad\forall y\in[-1,1].
\end{equation*}
Note that if $y_1=y_2$, the proof is unchanged since $\rho(y_1)=\rho(y_2)$.
\end{proof}
\begin{remark}
    For $c=0$, the upper bounds \cref{eq:bound} reduce to
    $ Q_{11}^2+Q_{12}^2\leq 1,\; M_1^2 +M_2^2\leq 1,$
   which are the Ginzburg--Landau bounds in \cite{majumdar-2010-article} for $\Qvec$ and $\Mvec$.
    Moreover, if $c$ is small, we can expand $\rho^*$ in powers of $c$ to deduce that  $Q_{11}^2+Q_{12}^2\leq 1+\frac{c}{4},\; M_1^2+M_2^2\leq 1+2c$
       to leading order in $c$.
    Hence, the nemato-magnetic coupling perturbs the Ginzburg--Landau bounds linearly, for small $c$ (see the supplementary material for the case of large $c$ too).
\end{remark}

With the $L^\infty$ bounds at hand, one can prove that there is a unique critical point of \cref{eq:4}, which is necessarily the global energy minimiser, in the $l \to \infty$ limit.

\begin{theorem}
    \label{thm:uniqness}
    (Uniqueness of minimisers for sufficiently large $l$)
    For a fixed $c$ and for
    $l_1=l_2=:l$ 
sufficiently large and $\xi=1$, there exists a unique critical point (and hence global minimiser) of the full energy \cref{eq:4}, in the admissible space \cref{eq:admissible_space}.
\end{theorem}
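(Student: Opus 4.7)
The plan is a standard difference argument made quantitative using the $L^\infty$ bounds from \cref{thm:maximum-principle}. Suppose $(Q_{11}^{(1)},Q_{12}^{(1)},M_1^{(1)},M_2^{(1)})$ and $(Q_{11}^{(2)},Q_{12}^{(2)},M_1^{(2)},M_2^{(2)})$ are two critical points of \cref{eq:4} in $\mathcal A$. Write $u_{1i}=Q_{1i}^{(1)}-Q_{1i}^{(2)}$ and $v_i=M_i^{(1)}-M_i^{(2)}$ for $i=1,2$. Since both profiles share the boundary data \cref{eq:5}, the differences $u_{11},u_{12},v_1,v_2$ lie in $W^{1,2}_0(\Omega)$. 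Subtract the corresponding Euler--Lagrange equations \cref{eq:Q11}--\cref{eq:M2} pairwise, test each equation against its own difference, integrate over $\Omega$, and integrate the elastic part by parts. Adding the four resulting identities yields
\begin{equation*}
l\int_{-1}^{1}\!\!\left((u_{11}')^2+(u_{12}')^2+(v_1')^2+(v_2')^2\right)\dd y
= \int_{-1}^{1} \mathcal R\,\dd y,
\end{equation*}
where $\mathcal R$ collects all cubic and quadratic combinations that arise from expanding the difference of the right-hand sides of \cref{eq:Q11}--\cref{eq:M2}.

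Next I would bound $\mathcal R$ pointwise. Every term in $\mathcal R$ is of the form (polynomial in the two solutions) $\times$ (component of $u$ or $v$) $\times$ (component of $u$ or $v$); e.g.\ the nematic cubic contributes $4\bigl(Q_{11}^{(1)\,2}+Q_{11}^{(1)}Q_{11}^{(2)}+Q_{11}^{(2)\,2}+Q_{12}^{(1)\,2}-1\bigr)u_{11}^2$ plus cross terms in $u_{11}u_{12}$, while the coupling contributes combinations like $c(M_1^{(1)}+M_1^{(2)})v_1 u_{11}$, $cQ_{12}^{(i)}v_1 v_2$, etc. Using \cref{thm:maximum-principle} we have $|Q^{(i)}|\le \rho^*$ and $|M^{(i)}|^2\le 1+2c\rho^*$ uniformly in $y$, so each prefactor is bounded by an explicit constant $K(c)$ depending only on $\rho^*$ and $c$ (and thus capturing the $\sqrt{c_0^2+c^2}$ scaling announced in the introduction). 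Cauchy--Schwarz and $2ab\le a^2+b^2$ then yield
\begin{equation*}
\int_{-1}^{1}\mathcal R\,\dd y \le K(c)\int_{-1}^{1}\!\!\left(u_{11}^2+u_{12}^2+v_1^2+v_2^2\right)\dd y.
\end{equation*}
Finally, the Poincar\'e inequality on $W^{1,2}_0(-1,1)$ (with sharp constant $4/\pi^2$) gives $\|w\|_{L^2}^2\le (4/\pi^2)\|w'\|_{L^2}^2$ for each of the four components, so
\begin{equation*}
l\,\|(u_{11}',u_{12}',v_1',v_2')\|_{L^2}^2 \le \tfrac{4}{\pi^2}\,K(c)\,\|(u_{11}',u_{12}',v_1',v_2')\|_{L^2}^2.
\end{equation*}
For $l>4K(c)/\pi^2$ this forces all four derivatives, and hence all four differences, to vanish identically, giving uniqueness. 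Existence of a global minimiser is guaranteed by \cref{thm:existance}, so this unique critical point must be it.

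The main technical obstacle is the bookkeeping in extracting an explicit and sharp $K(c)$: the coupling terms mix $Q$- and $M$-differences, so one needs to redistribute the $L^\infty$ bounds between a $u$-factor and a $v$-factor via weighted Young's inequalities to avoid overcounting, and to ensure that the resulting constant scales as $c^2$ plus a fixed polynomial in $\rho^*$ (matching the $\sqrt{c_0^2+c^2}$ threshold advertised for the physical parameter $D$). Everything else, in particular the $H^1_0$-coercivity of the elastic part and the passage from ``sufficiently large $l$'' to ``sufficiently small $D$'', is routine once this constant is in hand.
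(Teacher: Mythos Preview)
Your argument is correct and complete; the only cosmetic omission is spelling out explicitly that the maximum principle \cref{thm:maximum-principle} applies to \emph{both} solutions, so that the $L^\infty$ bounds are available for every polynomial prefactor you encounter in $\mathcal R$.

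The paper takes a closely related but formally different route: rather than subtracting the Euler--Lagrange equations and testing against the differences, it shows directly that the functional $F$ is strictly midpoint-convex on the segment joining any two critical points. Concretely, it writes $F\bigl(\tfrac{\Qvec+\overline{\Qvec}}{2},\tfrac{\Mvec+\overline{\Mvec}}{2}\bigr)$ in terms of $\tfrac12\bigl(F(\Qvec,\Mvec)+F(\overline{\Qvec},\overline{\Mvec})\bigr)$ plus a remainder, bounds the bulk remainder via second-derivative estimates on $f$ (using the same $L^\infty$ bounds you invoke), and absorbs it into the negative gradient term $-\tfrac{l}{8}\|\partial_y(\Qvec-\overline{\Qvec})\|_{L^2}^2-\tfrac{l}{8}\|\partial_y(\Mvec-\overline{\Mvec})\|_{L^2}^2$ via Poincar\'e. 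Strict convexity then contradicts the existence of two distinct critical points, since $t\mapsto F(t\Qvec+(1-t)\overline{\Qvec},\,t\Mvec+(1-t)\overline{\Mvec})$ would be $C^1$ with vanishing derivative at both endpoints. Your difference argument is essentially the infinitesimal version of this convexity computation and is arguably more direct; the paper's version has the mild advantage of yielding strict convexity of $F$ (on the relevant subset) as a byproduct. Both produce the same $c^2$ scaling for the threshold $l^*(c)$.
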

\begin{proof}
    We first show that the free energy \cref{eq:4} is strictly convex using the maximum principle.
    In fact, we let $\left(\Qvec,\Mvec\right),\left(\overline{\Qvec},\overline{\Mvec}\right)\in \mathcal{A}$ so that $\left(\Qvec-\overline{\Qvec}\right)\in W^{1,2}_0$ and $\left(\Mvec-\overline{\Mvec}\right)\in W^{1,2}_0$, where $W^{1,2}_0$ is the closure of $C^\infty_0$ with respect to the $W^{1,2}$-norm.
   Note that
\begin{align}
    F\left(\frac{\Qvec+\overline{\Qvec}}{2},\frac{\Mvec+\overline{\Mvec}}{2}\right)
        =&\frac{1}{2}\left[F\left(\Qvec, \Mvec\right)+F\left(\overline{\Qvec},\overline{\Mvec}\right)\right]+\int_\Omega \Biggl\{f\left(\frac{\Qvec+\overline{\Qvec}}{2},\frac{\Mvec+\overline{\Mvec}}{2}\right) \nonumber \\
        &-\frac{1}{2}\left[f\left(\Qvec,\Mvec\right) +f\left(\overline{\Qvec},\overline{\Mvec}\right) \right]-\frac{ l}{8}\left[\left( \frac{\mathrm{d} \Qvec}{\mathrm{d} y}\right) - \left(\frac{\mathrm{d} \overline{\Qvec}}{\mathrm{d} y}\right) \right]^2 \nonumber \\
        & -\frac{l}{8}\left[\left( \frac{\mathrm{d} \Mvec}{\mathrm{d} y}\right) - \left(\frac{\mathrm{d} \overline{\Mvec}}{\mathrm{d} y}\right) \right]^2 \Biggr\}~ \mathrm{d}y \nonumber\\
        \leq & \label{eq:poincare_ineq}\frac{1}{2}\left[F\left(\Qvec, \Mvec\right)+F\left(\overline{\Qvec},\overline{\Mvec}\right)\right]+\int_\Omega \Biggl\{f\left(\frac{\Qvec+\overline{\Qvec}}{2},\frac{\Mvec+\overline{\Mvec}}{2}\right) \\
        &-\frac{1}{2}\left[f\left(\Qvec,\Mvec\right) +f\left(\overline{\Qvec},\overline{\Mvec}\right) \right] \Biggr\}~ \mathrm{d}y -\frac{l}{16}\|\Qvec-\overline{\Qvec}\|^2_{L^2} \nonumber\\
        &-\frac{l}{16}\|\Mvec-\overline{\Mvec}\|^2_{L^2} \nonumber,
\end{align}
where $f$ is the bulk energy density \cref{eq:f}, and we have used the Poincar\'e inequality with the Poincar\'e constant $c_p=\frac{1}{2}$ in the last inequality.
We estimate the second partial derivatives of $f$, using the $L^\infty$ bounds above, yielding
\begin{eqnarray*}
    && \frac{\partial^2 f}{\partial Q_{1i}\partial Q_{1j}}=4\delta_{ij}\left(Q^2_{11}+Q^2_{12}-1\right)+8Q_{1i}Q_{1j}\leq 4\left(3(\rho^*)^2-1\right)\eqqcolon a_1,\\
    && \left| \frac{\partial^2 f}{\partial M_i \partial M_j}\right|\leq  B (5c\rho^*+1) \eqqcolon a_2,\\
    &&\left|\frac{\partial^2 f}{\partial Q_{1i}\partial M_j} \right|\leq A c\sqrt{1+2c\rho*}\eqqcolon a_3,
\end{eqnarray*}
for $i,j\in\{1,2\}$, where $\delta_{ij}$ is the Kronecker delta symbol and $A$, $B$ are constants independent of $c$.
Using methods parallel to \cite[Lemma 8.2]{lamy-2014-article}, we have
\begin{equation}
\label{eq:f_bound}
\begin{aligned}
    \int_\Omega \bigg\{f
 &\left(\frac{\Qvec+\overline{\Qvec}}{2},\frac{\Mvec+\overline{\Mvec}}{2}\right)-\frac{1}{2}\left[f\left(\Qvec,\Mvec\right) +f\left(\overline{\Qvec},\overline{\Mvec}\right) \right] \bigg\}~\mathrm{d}y \\
 &\leq a_1\|\Qvec-\overline{\Qvec}\|^2_{L^2}
    +a_2\|\Mvec-\overline{\Mvec}\|^2_{L^2}+a_3\|\Qvec-\overline{\Qvec}\|_{L^2}\|\Mvec-\overline{\Mvec}\|_{L^2}.
\end{aligned}
\end{equation}
Note that
\begin{align*}
         & \left\|\Qvec-\overline{\Qvec}\right\|_{L^2} \left\|\Mvec-\overline{\Mvec}\right\|_{L^2}\leq \frac{1}{2}\left(\epsilon\left\|\Qvec-\overline{\Qvec}\right\|_{L^2}^2+\epsilon^{-1}\left\|\Mvec-\overline{\Mvec}\right\|_{L^2}^2\right) \quad \forall \epsilon>0.
\end{align*}
We take $\epsilon=2$ for convenience and then substitute \cref{eq:f_bound} into \cref{eq:poincare_ineq}, so that
\begin{align*}
    F\left(\frac{\Qvec+\overline{\Qvec}}{2},\frac{\Mvec+\overline{\Mvec}}{2}\right)
    &\leq \frac{1}{2}\left[F\left(\Qvec, \Mvec\right)+F\left(\overline{\Qvec},\overline{\Mvec}\right)\right]+\left(a_1+a_3-\frac{l}{16}\right)\|\Qvec-\overline{\Qvec}\|^2_{L^2}\\
    &+\left(a_2+\frac{a_3}{4}-\frac{ l}{16}\right)\|\Mvec-\overline{\Mvec}\|^2_{L^2}.
\end{align*}
Hence, for $l>l^*(c)=\max\left\{16(a_1+a_3),4(4a_2+a_3)\right\}$, it holds that
\begin{equation*}
    F\left(\frac{\Qvec+\overline{\Qvec}}{2},\frac{\Mvec+\overline{\Mvec}}{2}\right)< \frac{1}{2}F\left(\Qvec,\Mvec\right)+\frac{1}{2}F\left(\overline{\Qvec},\overline{\Mvec}\right)
\end{equation*}
for all $\Qvec,\overline{\Qvec}\in W^{1,2}$ and $\Mvec,\overline{\Mvec}\in W^{1,2}$ such that $\Qvec\neq\overline{\Qvec}$, $\Mvec\neq\overline{\Mvec}$.
Therefore, $F$ is strictly convex.

Now assume that for $l\in(l^*,\infty)$, there exist two solutions $\left(\Qvec,\Mvec\right)$ and $\left(\overline{\Qvec},\overline{\Mvec}\right)$ of \cref{eq:Q11}-\cref{eq:M2} in the admissible space $\mathcal{A}$.
Then the mapping
$$[0,1]\ni t\mapsto F\left(t\Qvec+(1-t)\overline{\Qvec}, t\Mvec+(1-t)\overline{\Mvec}\right)$$
is $C^1$ (continuously differentiable) and its derivative vanishes at $t=0,1$.
However, this contradicts the strict convexity of $F$ and hence, the uniqueness result follows.
\end{proof}

\begin{remark}
    \label{rem:length}
The existence, uniqueness and maximum principle results work in two and three dimensions, and can be adapted to $l_1\neq l_2$ and $\xi \neq 1$.
    Recall the definitions of the dimensionless parameters in \cite{bisht-2019-article}: 
    \begin{equation}
        \label{eq:dimenless-def}
        l_1=\frac{L}{D^2|A|},\;l_2=\frac{\kappa}{D^2|\alpha|},\; c=\frac{\gamma\mu_0}{|A|}\sqrt{\frac{C}{2|A|}}\frac{|\alpha|}{\beta},
    \end{equation}
    where $A$ is the re-scaled temperature, $L$ is the nematic elastic constant, $\kappa$ is the magnetic elastic constant, $C$, $\alpha$, $\beta$ are material-dependent constants, $\gamma$ is a coupling parameter and $\mu_0$ is a universal constant.
    From \cref{thm:uniqness}, the conditions $l=l_1=l_2>l^*(c)=\max\{16(a_1+a_3),4(4a_2+a_3)\}$  guarantee the uniqueness of a solution for the system \cref{eq:Q11}-\cref{eq:M2}.
    The parameters $a_1,a_2, a_3$ grow as $c^2$ for large $c$, and thus the condition $l > l^*(c)$ is equivalent to  $\frac{c^2_1 L}{D^2 |A| \left(c_0^2 + c^2 \right)} \gg 1$ for some constants $c_0,c_1$ or $D \ll \sqrt{\frac{c^2_1 L}{|A| \left(c_0^2 + c^2\right)}}\eqqcolon c_1\frac{\xi_n}{\sqrt{c_0^2+c^2}}$, i.e., when the physical length $D$ is much smaller than an enhanced material-dependent length scale $c_1\frac{\xi_n}{\sqrt{c_0^2 + c^2}}$.
Here, $\xi_n=\sqrt{\frac{L}{|A|}}$ is the temperature-dependent nematic correlation length. 
For $c=0$, we recover the uniqueness results reported in \cite{lamy-2014-article} and \cite{canevari_majumdar_spicer2016}.
\end{remark}

\subsection{Convergence analysis for $l\to \infty$ and $l\to 0$}\label{sec:convergence_full_problem}
For a fixed $c>0$, the $l\to \infty$ limit corresponds to very narrow channels with $D \ll  \sqrt{\frac{L}{|A| (c_0^2+c^2)}}$ as discussed in \cref{rem:length}. 
 From the maximum principle \cref{thm:maximum-principle}, $\|\Qvec\|_{L^\infty}$ and $\|\Mvec\|_{L^\infty}$ are bounded independently of $l$, as shown in \cref{eq:bound}.
Furthermore, in the $l \to\infty $ limit, one can easily see that \cref{eq:Q11}-\eqref{eq:M2} reduce to the Laplace equations
\begin{equation}
    \label{eq:laplace_system}
    \begin{aligned}
        & \frac{\mathrm{d}^2 Q_{11}}{\mathrm{d}y^2}=0,&& \frac{\mathrm{d}^2 Q_{12}}{\mathrm{d}y^2}=0,\\
        & \frac{\mathrm{d}^2 M_1}{\mathrm{d}y^2}=0, && \frac{\mathrm{d}^2 M_2}{\mathrm{d}y^2}=0,
    \end{aligned}
\end{equation}
subject to \cref{eq:5}, which admits the unique solution as shown below:
\begin{equation}
    \label{eq:laplace_solutions}
    \begin{aligned}
         (\Qvec^\infty,\Mvec^\infty)=(Q^\infty_{11}, Q^\infty_{12},M^\infty_1,M^\infty_2)=(-y,0,-y,0).
    \end{aligned}
\end{equation}
In fact, \cref{eq:laplace_solutions} is an \textbf{order reconstruction} solution (OR), as introduced in \cref{sec:OR}, with linear profiles for $Q_{11}$ and $M_1$.
In the next theorem, we use the method of sub- and super-solutions as in \cite{fang-2020-article} to study the convergence of solutions of \cref{eq:Q11}-\cref{eq:M2} to $(\Qvec^\infty, \Mvec^\infty)$, as $l\to \infty$.

\begin{theorem}
    \label{thm:laplace}
    (Convergence result for $l\to \infty$)
    Assume $l > l^*$.
    Let $(\Qvec^l,\Mvec^l)$ be the unique solution of the Euler--Lagrange equations \cref{eq:Q11}-\cref{eq:M2} in  \cref{eq:admissible_space}. Then $(\Qvec^l,\Mvec^l)$ converge to $(\Qvec^\infty,\Mvec^\infty)$ as $l\to \infty$, with the following estimates:
\begin{equation}
    \forall j=1,2,\quad\|Q^l_{1j}-Q^\infty_{1j}\|_{L^\infty}\leq \alpha_1 l^{-1},\;\|M^l_{j}-M^\infty_{j}\|_{L^\infty}\leq \alpha_2 l^{-1},
\end{equation}
for positive constants $\alpha_1,\alpha_2$ independent of $l$.
\end{theorem}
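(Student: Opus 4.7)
The plan is to recast the system \cref{eq:Q11}--\cref{eq:M2} as a perturbed Laplace problem and build barrier (sub-/super-solution) functions for each component, exploiting that the maximum-principle bounds from \cref{thm:maximum-principle} render the nonlinear right-hand sides uniformly bounded in $l$.

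First I would introduce the difference variables
\begin{equation*}
    w_{11} = Q_{11}^l - Q_{11}^\infty,\quad w_{12} = Q_{12}^l - Q_{12}^\infty,\quad n_1 = M_1^l - M_1^\infty,\quad n_2 = M_2^l - M_2^\infty,
\end{equation*}
so that each $w_{1j}, n_j \in W^{1,2}_0(\Omega)$ by virtue of \cref{eq:5} and \cref{eq:laplace_solutions}, and they satisfy
$l\,w_{1j}'' = F_{1j}(\Qvec^l,\Mvec^l)$, $l\,n_j'' = G_j(\Qvec^l,\Mvec^l)$, where the $F_{1j}, G_j$ are exactly the bulk terms appearing on the right of \cref{eq:Q11}--\cref{eq:M2}. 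The crucial observation is that the amplitudes $|\Qvec^l|, |\Mvec^l|$ admit the $l$-independent bounds \cref{eq:bound}, so there exists $K = K(c) > 0$, independent of $l$, such that
\begin{equation*}
    \|F_{1j}(\Qvec^l,\Mvec^l)\|_{L^\infty} \le K\quad \text{and}\quad \|G_j(\Qvec^l,\Mvec^l)\|_{L^\infty} \le K,\qquad j = 1,2.
\end{equation*}

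Next, for each difference variable $v \in \{w_{11}, w_{12}, n_1, n_2\}$ I would introduce the explicit barrier
$\varphi(y) = \tfrac{K}{2l}(1-y^2)$, which solves $-l\varphi'' = K$ with $\varphi(\pm 1) = 0$. Writing the equation as $-lv'' = -H(y)$ with $\|H\|_{L^\infty}\le K$, I obtain $-l(\varphi-v)'' = K + H \ge 0$ and $-l(\varphi+v)'' = K - H \ge 0$ on $\Omega = (-1,1)$, together with zero boundary values. The standard one-dimensional maximum principle then yields $|v(y)| \le \varphi(y) \le \tfrac{K}{2l}$ for all $y \in [-1,1]$. Applying this to each of the four components provides
\begin{equation*}
    \|Q_{1j}^l - Q_{1j}^\infty\|_{L^\infty} \le \tfrac{K}{2}\,l^{-1},\qquad \|M_j^l - M_j^\infty\|_{L^\infty} \le \tfrac{K}{2}\,l^{-1},\qquad j=1,2,
\end{equation*}
which is the desired estimate with $\alpha_1 = \alpha_2 = K/2$.

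The main subtlety is disentangling the coupled system to run a scalar barrier argument: a priori the four components are intertwined through the cubic nematic terms and the $c$-coupling. The maximum-principle bounds of \cref{thm:maximum-principle}, however, are \emph{uniform in $l$}, so the nonlinear right-hand sides can be treated as a given $L^\infty$ source term of size $O(1)$, and the componentwise scalar barrier argument closes without requiring a vector-valued comparison principle. The only places where one must be careful are (i) verifying that the constant $K$ depends on $c$ alone (through $\rho^*$) and not on $l$, and (ii) ensuring that $l > l^\ast(c)$ is used solely to guarantee that $(\Qvec^l,\Mvec^l)$ is well-defined as the unique classical solution; the estimate itself then follows from the barrier construction above.
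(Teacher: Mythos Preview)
Your argument is correct and matches the paper's approach almost exactly: both bound the nonlinear right-hand sides uniformly in $l$ via \cref{thm:maximum-principle} and then apply a componentwise sub-/super-solution (barrier) comparison to obtain the $O(l^{-1})$ estimate. The only cosmetic difference is that the paper introduces auxiliary functions $v_k$ solving $v_k''=\text{const}$ with zero boundary data and invokes $\pm l^{-1}v_k$ as barriers, whereas you write out the explicit parabola $\tfrac{K}{2l}(1-y^2)$; these are the same object.
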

\begin{proof}
      Recalling \cite[Proposition 3.1]{fang-2020-article}
      and comparing equations \cref{eq:Q11}-\cref{eq:M2} with the Laplace equations \cref{eq:laplace_system}, we have for $j=1,2$,
      \begin{subequations}
          \label{eq:inequality}
      \begin{align}
          -l^{-1}\left(4\rho^*\left(\left(\rho^*\right)^2-1\right)+c\left(1+2c\rho^*\right)\right) &\leq \frac{\mathrm{d}^2}{\mathrm{d}y^2}\left(Q^l_{1j}-Q^\infty_{1j}\right)\\
          &\leq l^{-1}\left(4\rho^*\left(\left(\rho^*\right)^2-1\right)+c\left(1+2c\rho^*\right)\right)\;\text{ in }\Omega,\nonumber\\
          Q^l_{1j}-Q^\infty_{1j}&=0\; \text{ on }\partial\Omega,\\
      -l^{-1}6c\rho^*(1+2c\rho^*)^\frac{1}{2} \leq \frac{\mathrm{d}^2}{\mathrm{d}y^2}(M^l_{j}-M^\infty_{j}) &\leq l^{-1}6c\rho^*(1+2c\rho^*)^\frac{1}{2}\; \text{ in }\Omega,\\
          M^l_{j}-M^\infty_j &=0\; \text{ on }\partial\Omega.
      \end{align}
  \end{subequations}
      Here, the $L^{\infty}$ bound \cref{eq:bound} has been used in the inequalities above.
      Let $v_k\in C^\infty(\Omega;\mathbb{R})$, $k=1,2$, be solutions of
      \begin{equation*}
          \left\{
      \begin{aligned}
          &\frac{\mathrm{d}^2 v_1}{\mathrm{d}y^2} = 4\rho^*\left(\left(\rho^*\right)^2-1\right)+c\left(1+2c\rho^*\right)\quad &&\text{in }\Omega,\\
          &\frac{\mathrm{d}^2 v_2}{\mathrm{d}y^2} = 6c\rho^*\left(1+2c\rho^*\right)^\frac{1}{2}\quad &&\text{in }\Omega,\\
          &v_k=0\quad \text{for }k=1,2 \quad&&\text{on }\partial\Omega.
      \end{aligned}
      \right.
      \end{equation*}
      Then each $v_k$ only depends on the coupling parameter $c$
      . Hence, by the classical sub- and super-solution method, $-l^{-1}v_1$ is a sub-solution and $l^{-1}v_1$ is a super-solution for each component of $(\Qvec^l-\Qvec^\infty)$, and similarly, $-l^{-1}v_2$ is a sub-solution and $l^{-1}v_2$ is a super-solution for each of the vector components of $(\Mvec^l-\Mvec^\infty)$.
      The estimates then follow and the proof is complete.
\end{proof}
In the supplementary material, we compute asymptotic expansions for $\Qvec^l$, $\Mvec^l$, for large $l$ and small $c$, complemented by numerical experiments.

Next, we consider the $l\to 0$ limit for fixed $c$, which is valid for large channel widths $D$, much greater than the nematic correlation length. 
 To this end, we rewrite the free energy \cref{eq:4} as 
\begin{equation}
    \label{eq:energy_non_negative}
    \begin{aligned}
        \frac{1}{l}F(Q_{11}, Q_{12}, M_1, M_2)
    \coloneqq & \int_{\Omega} \Bigg\{\frac{1}{2}\left[\left( \frac{\mathrm{d} Q_{11}}{\mathrm{d} y}\right)^2 + \left(\frac{\mathrm{d} Q_{12}}{\mathrm{d} y}\right)^2 \right] + \\
                                          & + \frac{1}{2} \left[ \left(\frac{\mathrm{d} M_1}{\mathrm{d} y}\right)^2 + \left(\frac{\mathrm{d} M_2}{\mathrm{d} y}\right)^2 \right] + \frac{1}{l} \Bar{f}(Q_{11},Q_{12},M_1,M_2)\Bigg\} ~\mathrm{d}y,
    \end{aligned}
\end{equation} where
\begin{equation}
    \label{eq:non-neg}
    \begin{aligned}
        \Bar{f}(Q_{11},Q_{12},M_1,M_2) &\coloneqq \left( Q_{11}^2 + Q_{12}^2 - 1 \right)^2 + \frac{1}{4}\left(M_1^2 + M_2^2 - 1 \right)^2\\
               &\quad - cQ_{11}\left(M_1^2 - M_2^2 \right) - 2c Q_{12}M_1 M_2-\alpha(c) \geq 0
    \end{aligned}
\end{equation}
and the $c$-dependent constant, $\alpha(c)$, is the minimum value of the bulk energy density. The set of minimisers of $\Bar{f}$ plays a crucial role in the analysis, and belong to the set 
\[
\mathcal{A}_{\min} \coloneqq \left\{ \left(Q_{11}, Q_{12}, M_1, M_2 \right)= \left( \rho^* \cos 2\phi, \rho^* \sin 2 \phi,  \sqrt{1 + 2 c \rho^*} \cos \phi,  \sqrt{1 + 2c \rho^*} \sin \phi \right) \right\},
\]
where $\rho^*$ is given by \cref{eq:rho_max} and $\phi$ is an arbitrary angle (see the supplementary material).
The set $\mathcal{A}_{\min}$ is clearly a continuum.

Consider the following admissible test maps for sufficiently small $l$, with $Q_{12}^t(y) = M_2^t(y) \equiv 0$ for $y\in\left[-1, 1 \right]$ and 
\begin{equation*}
    Q_{11}^t(y)  = \begin{cases}
        g(y), & y\in \left[-1, -1 + \sqrt{l}\right), \\
        \rho^*, & y\in \left( - 1 + \sqrt{l}, 1  - \sqrt{l} \right), \\
        h(y), & y\in \left( 1 - \sqrt{l}, 1 \right].
    \end{cases}
\end{equation*}
Here, $g$ linearly interpolates between $\rho^*$ and $g(-1) = 1$; $h$ linearly interpolates between $\rho^*$ and $h(1) = -1$. Similarly, we use the following test map for $M_1$:
\begin{equation*}
    M_{1}^t(y)  =
    \begin{cases}
        g^*(y), & y\in \left[-1, -1 + \sqrt{l}\right), \\
        \sqrt{1 + 2c\rho^*}, & y\in \left( - 1 + \sqrt{l}, 1  - \sqrt{l} \right), \\
        h^*(y), & y\in \left( 1 - \sqrt{l}, 1 \right].
    \end{cases}
\end{equation*}
Here, $g^*$ linearly interpolates between $\sqrt{1 + 2c\rho^*}$ and $g^*(-1) = 1$; $h^*$ linearly interpolates between $\sqrt{1 + 2c\rho^*}$ and $h^*(1) = -1$.
We have $\bar{f}\left(\rho^*, 0, \sqrt{1 + 2c\rho^*}, 0 \right) = 0$ (also see supplementary material).
It is then straightforward to check that
\begin{equation*}
    \frac{1}{l} F\left(Q_{11}^t, 0, M_1^t, 0 \right) \leq \frac{C}{\sqrt{l}}
\end{equation*}
for a positive constant $C$ independent of $l$, with $l$ small enough.
Hence, for an energy minimiser $(\Qvec^l, \Mvec^l)$ of the full energy \cref{eq:4}, we necessarily have that
\[
\frac{1}{l} F\left(Q_{11}^l, Q_{12}^l, M_1^l, M_2^l \right) \leq \frac{C}{\sqrt{l}},
\]
and hence,
\[
    \int_{-1}^{1} \Bar{f}\left( Q_{11}^l , Q_{12}^l, M_1^l, M_2^l \right)~\mathrm{d}y \leq C \sqrt{l} \to 0 \quad \textrm{as $l\to 0$.}
\]
Furthermore, since $\Bar{f}\geq 0$ by its definition \cref{eq:non-neg}, we deduce that $\Bar{f}\left(Q_{11}^l , Q_{12}^l, M_1^l, M_2^l \right) \equiv 0$ almost everywhere, as $l \to 0$.
Hence, in the $l\to 0$ limit, we expect the energy minimisers, $\left(\Qvec^l, \Mvec^l \right)$ to minimise the Dirichlet energy of $\Qvec$ and $\Mvec$ in the constrained set $\mathcal{A}_{\min}$ defined above, so that the limiting minimisers are given by:
%
\begin{equation}
    \label{eq:limit_map}
    \Qvec^0(c, y)=\rho^*(\cos(2\phi_0 (y)),\sin(2 \phi_0(y))),\;\Mvec^0(c, y)=\sqrt{1+2c\rho^*}\left(\cos(\phi_0(y)),\sin(\phi_0(y))\right),
\end{equation}
where there are two choices of $\phi_0$, dictated by the boundary conditions for $\Mvec$:
\begin{subequations}
    \label{eq:relation-theta-phi}
    \begin{align}
        & \frac{\mathrm{d}^2\phi_0}{\mathrm{d}y^2}=0,\\
        & \phi_0(-1) = 0, \phi_0(1) = \pi \quad \textrm{or} \quad \phi_0(-1) = 0, \phi_0(1) = -\pi,\\ 
        & 2\phi_0-\theta_0=2 n \pi, \quad\textrm{$n\in\mathbb{Z}$.}\label{eq:constraint-theta-phi}
    \end{align}
\end{subequations}
Here, $\theta_0$ and $\phi_0$ denote the director and magnetisation vector angles, respectively. 
In \cref{sec:num-full}, we numerically demonstrate that the energy minimisers, $\left(\Qvec^l, \Mvec^l \right)$ indeed converge to one of the two limiting maps in $\mathcal{A}_{\min}$, defined above, almost everywhere except near $y=\pm 1$ (and interior points associated with jumps in $(2\phi_0 - \theta_0)$, since $2\phi_0 - \theta_0$ is constrained to be an even multiple of $2\pi$, in the $l \to 0$ limit).
There are necessarily boundary layers near $y=\pm 1$, since the limiting maps in $\mathcal{A}_{\min}$ do not satisfy the  boundary conditions at $y=\pm 1$. We indeed have multistability in this limit.

We do not prove convergence results in the $l\to 0$ limit rigorously, since this requires a delicate $\Gamma$-convergence analysis for a vector-valued problem with four degrees of freedom, with a continuum vacuum manifold, and additional complications from the boundary conditions. This warrants a separate study in its own right.

\section{Order reconstruction solutions}
\label{sec:OR}
The results in \cref{sec:fullproblem} concern the full problem \cref{eq:Q11}-\cref{eq:M2} or ferronematic solutions with four degrees of freedom, $(\Qvec, \Mvec) = (Q_{11}, Q_{12}, M_1, M_2)$.
It is evident from the Euler--Lagrange equations \cref{eq:Q11}-\cref{eq:M2}, that we always have a branch of solutions with $Q_{12}=M_2=0$.
We refer to such solutions with only two degrees of freedom, $(\Qvec, \Mvec) = (Q_{11}, 0, M_1, 0)$ as \emph{order reconstruction} (OR) solutions.
A nematic (resp.\ magnetic) domain wall is defined to be a point $y=y^* \in (-1, 1)$ such that $\Qvec(y^*)=\left(Q_{11}(y^*), Q_{12}(y^*) \right) = 0$ (resp.\ $\Mvec(y^*) = 0$). We call these points ``walls" since they correspond to two-dimensional surfaces in the $xz$-plane. Ferronematic solutions need not have domain walls in general but OR solutions in the admissible space (\ref{eq:admissible_space}) must have domain walls because of the imposed Dirichlet conditions. There must exist an interior point $y^*\in (-1, 1)$ such that $Q_{11} (y^*) = 0$, because $Q_{11}(-1) = 1$ and $Q_{11}(1) = -1$, and $Q_{12}(y) = 0$ for all $y\in \left[-1, 1 \right]$ by definition; similar remarks apply to the domain wall in $\Mvec$.
Furthermore, domain walls in $\Qvec$ and $\Mvec$ can occur at different points, as we shall see in \cref{sec:num}.
OR solutions are special since the domain walls separate polydomains i.e., distinctly ordered domains.
In fact, recall the parameterisation \cref{eq:substitution} and note that $Q_{12} = M_2 = 0$ implies $\theta =n\pi$ (for some integer $n$) everywhere; equivalent remarks apply to $\phi$.
Hence, there is necessarily a domain wall in $\Qvec$ such that $\theta = 2n\pi$ on one side of the domain wall containing  $y=-1$, and $\theta=(2m + 1)\pi$ (for some integers $n,m$) on the other side of the domain wall containing  $y=1$;
analogously, there is a domain wall in $\Mvec$ that separates two polydomains, with $\phi=2n\pi$ and $\phi=(2m + 1)\pi$ for some integers $n$ and $m$ respectively. These domain walls are associated with jumps in $\nvec$ and the normalised magnetisation vector, $\mvec = \mathbf{M}/ |\mathbf{M}|$. The domain walls are not singularities of the $\Qvec$ and $\Mvec$-solutions, although they regularise singularities/jumps in $\nvec$ and $\mvec$. Domain walls need not be associated with jumps and could just be regular zeroes of the $\Qvec$ and $\Mvec$-fields, although such domain walls would be energetically expensive.  

We interpret OR solutions as critical points of the following OR energy (which is equivalent to \cref{eq:4} with $Q_{12} = M_2 = 0$):
\begin{equation}
    \label{eq:OR_energy}
    \begin{aligned}
        E(Q_{11}, M_1)\coloneqq & \int_{-1}^{1} \Bigg\{ \frac{l}{2} \left(\frac{\mathrm{d} Q_{11}}{\mathrm{d} y} \right)^2 + \frac{l}{2} \left( \frac{\mathrm{d} M_1}{\mathrm{d} y} \right)^2 + (Q_{11}^2 - 1)^2\\
    & + \frac{1}{4}\left(M_1^2 - 1\right)^2 - cQ_{11} M_1^2 \Bigg\} \ \mathrm{d}y,
    \end{aligned}
\end{equation}
subject to the boundary conditions
\begin{equation}
    \label{eq:OR_BCs}
    \begin{aligned}
    & Q_{11}\left(-1 \right) = M_1 \left(-1 \right) = 1,
    & Q_{11}\left(1 \right) = M_1 \left(1 \right) = -1,
    \end{aligned}
\end{equation}
in the admissible space
\begin{equation}
    \label{eq:admissible_space_OR}
    \mathcal{A}^\prime = 
    \left\{Q_{11},M_1\in W^{1,2}\left(\Omega;\mathbb{R}\right), Q_{11}\;\textrm{and }M_1\;\textrm{satisfy the boundary conditions \cref{eq:OR_BCs}}\right\}.
\end{equation}
The OR bulk energy density is given by:
\begin{equation}
    f^{OR}(Q_{11},M_1)=(Q^2_{11}-1)^2+\frac{1}{4}(M^2_1-1)^2-cQ_{11}M_1^2,
    \label{eq:OR_homogeneous_energy}
\end{equation}
Hence, OR solutions are classical solutions of the following coupled ordinary differential equations,
\begin{equation}
    \label{eq:OR_equations}
    \begin{aligned}
    & l \frac{\mathrm{d}^2 Q_{11}}{\mathrm{d}y^2} = 4 Q_{11}(Q_{11}^2 - 1) - cM_1^2, \\
    & l \frac{\mathrm{d}^2 M_1}{\mathrm{d}y^2} = M_1 (M_1^2 - 1) - 2 c Q_{11} M_1.
    \end{aligned}
\end{equation}

In general, we expect multiple OR solutions for fixed values of $(l, c)$ and the optimal OR solution is a minimiser of the energy \cref{eq:OR_energy} in $\mathcal{A}^\prime$.
We give a straightforward existence theorem below, which follows immediately from the direct method in the calculus of variations \cite{evans-2010-book}, along with some qualitative properties.

\begin{theorem}
    \label{thm:existence_OR}
    (Existence, uniqueness and maximum principle)
    For all values of $(l, c)$, there exists a minimiser, $\left(Q_{11}^*, M_1^* \right)$ of the OR energy \cref{eq:OR_energy} in $\mathcal{A}^\prime$.
    This OR minimiser, $(\Qvec^{OR}, \Mvec^{OR}) = \left(Q_{11}^*, 0, M_1^*, 0 \right)$, is a solution of the full system \cref{eq:Q11}-\cref{eq:M2}, and thus a critical point of the full energy \cref{eq:4}.
    Additionally, $(\Qvec^{OR}, \Mvec^{OR})$ is the unique critical point, and hence, global minimiser of the energy \cref{eq:4}, for fixed positive $c$ and $l$ large enough, as in \cref{thm:uniqness}.
    Moreover, we have
    \begin{equation}
        |Q_{11}(y)|\leq \rho^*,\;M_1^2(y)\leq 1+2c\rho^*\quad\forall y\in[-1,1],
        \label{eq:OR_bound} 
    \end{equation}
    where $\rho^*$ is given by \cref{eq:rho_max}.
\end{theorem}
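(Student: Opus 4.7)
The plan is to establish the four claims one at a time, leveraging the analysis already developed for the full problem. For existence, I would mimic the proof of \cref{thm:existance}. The admissible space $\mathcal{A}'$ is nonempty since $(Q_{11}, M_1) = (-y, -y)$ belongs to it; the OR energy is convex and quadratic in the gradients, hence weakly lower semicontinuous in $W^{1,2}$. For coercivity I would bound the coupling term via Young's inequality as
\begin{equation*}
    -cQ_{11}M_1^2 \geq -\frac{c}{2}\left(\epsilon Q_{11}^2 + \epsilon^{-1} M_1^4\right),
\end{equation*}
and then choose $\epsilon > 2c$ so that, after completing squares in the nematic and magnetic bulk potentials, the resulting expression controls $Q_{11}^2$ and $M_1^4$ with positive coefficients. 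The direct method in the calculus of variations then yields a minimiser $(Q_{11}^*, M_1^*)$, and elliptic regularity as in \cite{bethuel-1993-article,zarnescu-2010-article} promotes it to a classical solution of \cref{eq:OR_equations}.

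To see that $(\Qvec^{OR}, \Mvec^{OR}) = (Q_{11}^*, 0, M_1^*, 0)$ is a critical point of the full energy \cref{eq:4}, I would simply substitute $Q_{12} = M_2 = 0$ into the full Euler--Lagrange system \cref{eq:Q11}--\cref{eq:M2}. Equations \cref{eq:Q12} and \cref{eq:M2} are satisfied trivially (every term is a product involving $Q_{12}$ or $M_2$), while \cref{eq:Q11} and \cref{eq:M1} reduce exactly to the OR equations \cref{eq:OR_equations} which $(Q_{11}^*, M_1^*)$ solves by construction.

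The uniqueness assertion is immediate from \cref{thm:uniqness}: for $l > l^*(c)$ the full energy admits a unique critical point in $\mathcal{A}$, and since $(\Qvec^{OR}, \Mvec^{OR})$ is one such critical point by the previous paragraph, it must coincide with the global minimiser of the full problem. Finally, the $L^\infty$ bounds follow by applying \cref{thm:maximum-principle} directly to $(\Qvec^{OR}, \Mvec^{OR})$, since it is a classical solution of the full system: the bounds $|\Qvec|^2 \leq (\rho^*)^2$ and $|\Mvec|^2 \leq 1 + 2c\rho^*$ immediately specialize, upon setting $Q_{12} = M_2 = 0$, to $|Q_{11}(y)| \leq \rho^*$ and $M_1^2(y) \leq 1 + 2c\rho^*$ for all $y \in [-1,1]$.

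The only step that requires any genuine work is the coercivity estimate, and even there the calculation parallels the one already performed for the full energy in \cref{thm:existance}; the coupling term here is only cubic in the state variables rather than quartic, which makes the splitting marginally simpler. Every other part of the theorem is a direct appeal to earlier results, so I expect no substantive obstacle.
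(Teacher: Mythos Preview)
Your proposal is correct and follows essentially the same argument as the paper: the same test map $(-y,-y)$ for nonemptiness, the same Young-inequality splitting $-cQ_{11}M_1^2 \geq -\frac{c}{2}(\epsilon Q_{11}^2 + \epsilon^{-1}M_1^4)$ with $\epsilon>2c$ for coercivity, and the same appeals to \cref{thm:uniqness} and \cref{thm:maximum-principle} for uniqueness and the $L^\infty$ bounds. The paper's proof is marginally more explicit in writing out the completed-square lower bound, but the structure is identical.
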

\begin{proof}
    Clearly, the admissible space $\mathcal{A}^\prime$ is non-empty as $(Q_{11}, M_1) = (-y, -y) \in \mathcal{A}^\prime$.
    We observe that \cref{eq:OR_energy} is lower semicontinuous since it is quadratic and thus, convex in both the gradients of $Q_{11}$ and $M_1$ \cite{evans-2010-book}.
    As before, the coupling energy density can be decomposed as follows, for arbitrary $\epsilon>0$
    \begin{equation*}
        -cQ_{11}M_1^2\geq-\frac{c}{2}\left(\epsilon Q^2_{11}+\frac{1}{\epsilon}M_1^4\right).
    \end{equation*}
    Therefore, the OR energy density is bounded from below, since $f^{OR}$ is quartic in $Q_{11}$ and $M_1$ and can absorb the terms above, for a suitable choice of $\epsilon$. 
    The existence of a minimiser, $(Q_{11}^*, M_1^*)$, of the OR energy \cref{eq:OR_energy}, is immediate from \cite{evans-2010-book}.
Furthermore, this minimiser is a (classical) solution of the equations \cref{eq:OR_equations} subject to the boundary conditions \cref{eq:OR_BCs}.
It is straightforward to check that the resulting OR solution, $\left(\Qvec^{OR}, \Mvec^{OR}\right) = \left(Q_{11}^*, 0, M_1^*, 0 \right)$ is also a solution of the full system, \cref{eq:Q11}-\cref{eq:M2} in the admissible space \cref{eq:admissible_space} for all values of $(l, c)$.

Since the full energy \cref{eq:4} has a unique critical point for $l$ large enough (see \cref{rem:length}), we deduce that $\left(\Qvec^{OR}, \Mvec^{OR}\right)$ is the unique energy minimiser of \cref{eq:4},
in the $l\to \infty$ limit. The bounds \cref{eq:OR_bound} follow immediately from \cref{thm:maximum-principle}, using the $L^\infty$ bounds for $|\Qvec|$ and $|\Mvec|^2$ with $Q_{12} = M_2 = 0$.
The solution branch $\left(\Qvec^{OR}, \Mvec^{OR}\right)$ exists for all values of $\left(l, c \right)$.
This completes the proof.
\end{proof}

\subsection{Convergence analysis in the $l\to 0$ limit}

Now, we study the regime of small $l$,  which describes macroscopic domains with $D \gg \sqrt{\frac{L}{|A|c^2}}$, for fixed $c>0$. We define the set of minimisers of the OR bulk potential \cref{eq:OR_homogeneous_energy}:
\[
    \mathcal{B}^{OR} = \left\{ (Q_{11}, M_1) = \left(\rho^*, \sqrt{1 + 2c\rho^*} \right), (Q_{11}, M_1) = \left(\rho^*, -\sqrt{1 + 2c \rho^*} \right) \right\}.
\]
As for the full problem, we expect minimisers of the OR energy \cref{eq:OR_energy} to converge to the set $\mathcal{B}^{OR}$ almost everywhere, away from $y=\pm 1$.
In fact, the boundary conditions, $(Q_{11}(-1 ), M_1(-1)) = (1, 1)$ and $(Q_{11}(1), M_1(1)) = (-1, -1)$ do not belong to $\mathcal{B}^{OR}$, thus, OR energy minimisers must have boundary layers near $y=\pm 1$ in this limit. We make these heuristics more precise using $\Gamma$-convergence results, as in \cite{wang_canevari_majumdar_2019}.

Consider the rescaled OR energy
\begin{equation}
    \label{eq:OR_energy_non-negative}
    \begin{aligned}
        \frac{1}{\sqrt{l}}E(Q_{11}, M_1)\coloneqq & \int_{-1}^{1} \left\{ \frac{\sqrt{l}}{2} \left(\frac{\mathrm{d} Q_{11}}{\mathrm{d} y} \right)^2 + \frac{\sqrt{l}}{2} \left( \frac{\mathrm{d} M_1}{\mathrm{d} y} \right)^2 + \frac{1}{\sqrt{l}}\tilde{f}(Q_{11},M_1)\right\} ~\mathrm{d}y
    \end{aligned}
\end{equation}
where
\begin{equation}
    \label{eq:non-neg-OR}
        \tilde{f}(Q_{11},M_1)\coloneqq \left( Q_{11}^2 - 1 \right)^2 + \frac{1}{4}\left(M_1^2 - 1 \right)^2 - cQ_{11}M_1^2-\beta(c)
               \geq 0,
\end{equation}
and the $c$-dependent constant, $\beta(c)$, is the minimum value of the OR bulk potential. 
As in \cite{braides} and \cite{wang_canevari_majumdar_2019}, we let $\mathbf{p} = \left(Q_{11}, M_1 \right)$ and define the following metric $d$ in the $\mathbf{p}$-plane, for any two points $\pvec_0, \pvec_1 \in \mathbb{R}^2$:
\begin{equation}
    \label{eq:metric}
    d\left(\pvec_0, \pvec_1 \right) = \inf \left\{ \int_{-1}^{1} \tilde{f}^{1/2}\left(\mathbf{p}(t)\right) \left| \frac{\mathrm{d}\mathbf{p}(t)}{\mathrm{d}t} \right|~\mathrm{d}t: \pvec(t)\in C^{1}\left([-1,1]; \mathbb{R}^2 \right), \pvec(-1) = \pvec_0, \pvec(1) = \pvec_1 \right\}.
\end{equation}
This metric is degenerate as $\tilde{f}(\pvec) = 0$ for $\pvec=\pvec^* = \left(\rho^*, \sqrt{1 + 2c\rho^*} \right)$ and $\pvec=\pvec^{**} = \left(\rho^*, -\sqrt{1 + 2c\rho^*} \right)$.
Despite such degeneracy, the infimum in \cref{eq:metric} is indeed attained for arbitrary $\pvec_0$ and $\pvec_1$ (see \cite[Lemma 9]{braides} and \cite{wang_canevari_majumdar_2019}).
Denote $\pvec_b(1) = (-1,-1)$ and $\pvec_b(-1) = (1, 1)$.
Let $\pvec_l$ be a minimiser of \cref{eq:OR_energy_non-negative} for a fixed $c>0$.
A straightforward application of \cite[Proposition 4.1]{wang_canevari_majumdar_2019} yields the following theorem.

\begin{theorem}
    \label{thm:OR_limit_maps}
    There exists a subsequence $l_k \to 0$ such that the minimisers $\pvec_{l_k}$ of \cref{eq:OR_energy_non-negative} converge in $ L^1\left([-1,1]\right)$ almost everywhere to a map of the form
    $
    \pvec_0 = \sum_{j=1}^{N}  p^j \chi_{E_j}
    $
    where where for any j, either $p^j = p^*$ or $p^j = p^{**}$, $\chi$ is the characteristic function of an interval, $E_j \subset (-1, 1)$ such that $\cup_{j=1}^N E_j = \left(-1, 1\right)$.
    Moreover, the intervals $E_j$ minimise the following functional
    \begin{equation}
        \label{eq:gamma1}
        J[ E_j]: = \sum_{j=1}^{N-1} d(\pvec^*, \pvec^{**}) + d\left(\pvec_0, \pvec_b(-1) \right) + d\left(\pvec_0 , \pvec_b (1) \right),
    \end{equation}
where the first term describes the number of jumps between $\pvec^*$ and $\pvec^{**}$, referred to as interior transition layers that necessarily contain a magnetic domain wall, and the energetic costs of the boundary layers are captured by the second and third terms.
\end{theorem}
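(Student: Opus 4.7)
The plan is to apply the general $\Gamma$-convergence framework of \cite[Proposition 4.1]{wang_canevari_majumdar_2019} to the rescaled OR energy \cref{eq:OR_energy_non-negative}. The first step is to verify its hypotheses: $\tilde f$ is a continuous, non-negative potential on $\mathbb{R}^2$ whose zero set is exactly the discrete pair $\{\pvec^*, \pvec^{**}\} = \mathcal{B}^{OR}$, as follows from the analysis after \cref{eq:OR_homogeneous_system} together with the subtraction of $\beta(c)$ in \cref{eq:non-neg-OR}; the boundary data $\pvec_b(\pm 1)$ lie outside this set; and the degenerate metric $d$ in \cref{eq:metric} is well defined with its infimum attained on a $C^1$ curve, by \cite[Lemma 9]{braides}.

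Next I would establish a uniform energy bound on minimisers. Constructing a test map analogous to the one preceding \cref{eq:limit_map}, equal to the bulk-minimising pair $\pvec^* = (\rho^*, \sqrt{1 + 2c\rho^*})$ on the interior $(-1 + \sqrt{l}, 1 - \sqrt{l})$ and interpolating linearly to the boundary data \cref{eq:OR_BCs} over layers of width $\sqrt{l}$, one obtains $\frac{1}{\sqrt{l}} E(Q_{11}^t, M_1^t) \leq C$ uniformly in small $l$; hence the same bound holds for any minimiser $\pvec_l$.

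The compactness step uses the Modica--Mortola inequality
\[
\frac{\sqrt{l}}{2}\,|\pvec_l'|^2 + \frac{1}{\sqrt{l}}\,\tilde f(\pvec_l) \;\geq\; \sqrt{2\,\tilde f(\pvec_l)}\,|\pvec_l'|,
\]
which controls the total variation of $\pvec_l$ measured in the degenerate metric $d$. Combined with $\int_{-1}^{1} \tilde f(\pvec_l)\,\mathrm{d}y \leq C\sqrt{l} \to 0$ and the $L^\infty$ bound \cref{eq:OR_bound}, a standard argument (cf.\ \cite{braides,wang_canevari_majumdar_2019}) yields a subsequence $l_k \to 0$ such that $\pvec_{l_k}$ converges in $L^1$ and almost everywhere to a piecewise constant map valued in $\{\pvec^*, \pvec^{**}\}$ with finitely many jumps, of the form stated.

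The minimality of $J[E_j]$ then follows from matching $\Gamma$-$\liminf$ and $\Gamma$-$\limsup$ inequalities. For the lower bound, I would localise the Modica--Mortola estimate above on small neighbourhoods of each interior jump to extract a contribution of at least $d(\pvec^*, \pvec^{**})$, and on neighbourhoods of $y = \pm 1$ to extract contributions of at least $d(\pvec_0(\mp 1), \pvec_b(\mp 1))$. The upper bound is produced by a recovery sequence that glues geodesics of $d$ across $O(\sqrt{l})$-wide transition layers around each jump point and at the boundaries, parametrised so that the Modica--Mortola inequality becomes an asymptotic equality. The hard part, already handled in the analogous vector-valued setting of \cite{wang_canevari_majumdar_2019}, is to show that distinct transition layers decouple in the limit so that the interior and boundary contributions add, and that geodesics of the degenerate metric $d$ can be attained and reparametrised without spurious savings from traversals that linger in the vanishing set $\mathcal{B}^{OR}$.
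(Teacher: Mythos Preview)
Your proposal is correct and follows precisely the route the paper takes: the paper does not give an independent proof but simply states that the result is a straightforward application of \cite[Proposition~4.1]{wang_canevari_majumdar_2019}, and what you have written is a faithful sketch of the Modica--Mortola/$\Gamma$-convergence machinery underlying that proposition, specialised to the potential $\tilde f$ in \cref{eq:non-neg-OR} and the boundary data \cref{eq:OR_BCs}. The only difference is one of detail: the paper defers entirely to the citation, whereas you have unpacked the hypothesis verification, the uniform energy bound via the linear-interpolation test map, the compactness step, and the $\liminf$/$\limsup$ inequalities explicitly.
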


We compute the following transition costs
\begin{equation}
    \label{eq:metrics}
    d(\pvec^*,\pvec^{**}), d(\pvec^*,\pvec_b(1)), d(\pvec^{**},\pvec_b(-1)), d(\pvec^*,\pvec_b(-1)), d(\pvec^{**},\pvec_b(1)).
\end{equation}
using the metric \cref{eq:metric}, and we can see from \cref{fig:metric-profile} that
\begin{equation*}
   d(\pvec^{*},\pvec_b(-1))< d(\pvec^{**},\pvec_b(-1))< d(\pvec^{**},\pvec_b(1)) < d(\pvec^*,\pvec^{**}) < d(\pvec^*,\pvec_b(1)).
\end{equation*}
It is clear that the minimiser of $J$ in \cref{eq:gamma1} is $\pvec^*$, with boundary layers near the edges $y=\pm 1$ and no interior jumps between $\pvec^*$ and $\pvec^{**}$.
%

\begin{figure}
    \centering
    \begin{minipage}{0.32\textwidth}
        \centering
        \includegraphics[width=1.0\textwidth]{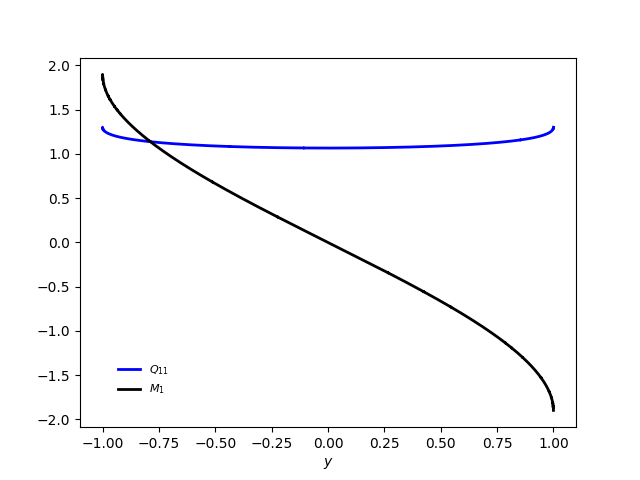}\\
$d(\pvec^*,\pvec^{**})\approx 3.008.$
    \end{minipage}
    \begin{minipage}{0.32\textwidth}
        \centering
        \includegraphics[width=1.0\textwidth]{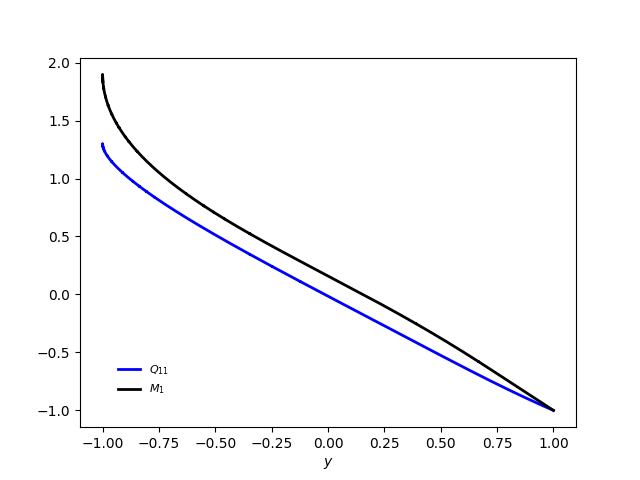}\\
        $d(\pvec^*,\pvec_b(1))\approx 3.967.$
    \end{minipage}
    \begin{minipage}{0.32\textwidth}
        \centering
      \includegraphics[width=1.0\textwidth]{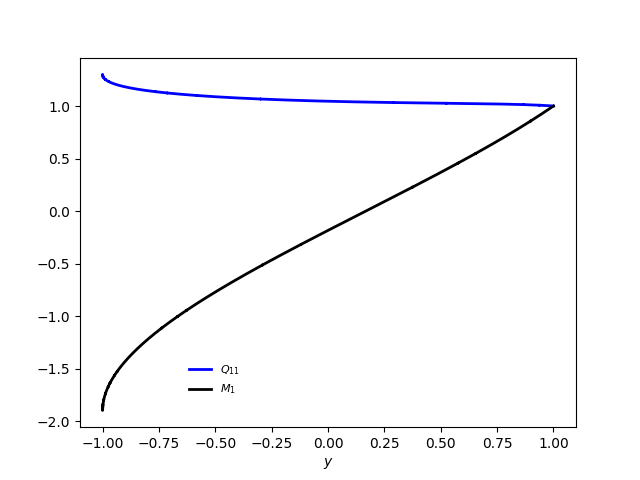}\\
        $d(\pvec^{**},\pvec_b(-1))\approx 2.577.$
    \end{minipage}
    \begin{minipage}{0.32\textwidth}
        \centering
      \includegraphics[width=1.0\textwidth]{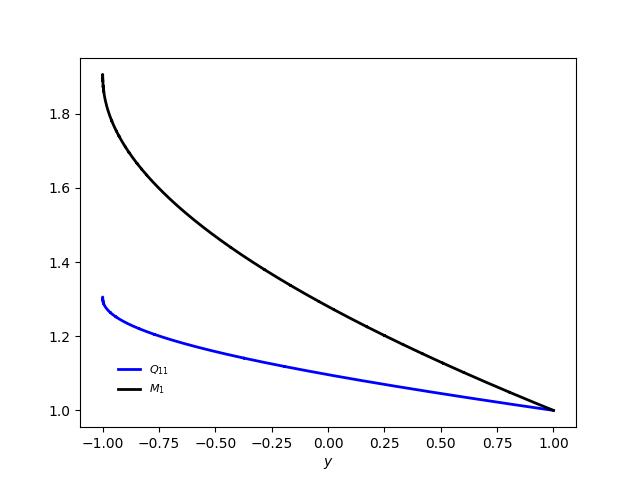}\\
        $d(\pvec^{*},\pvec_b(-1))\approx 0.455.$
    \end{minipage}
    \begin{minipage}{0.32\textwidth}
        \centering
      \includegraphics[width=1.0\textwidth]{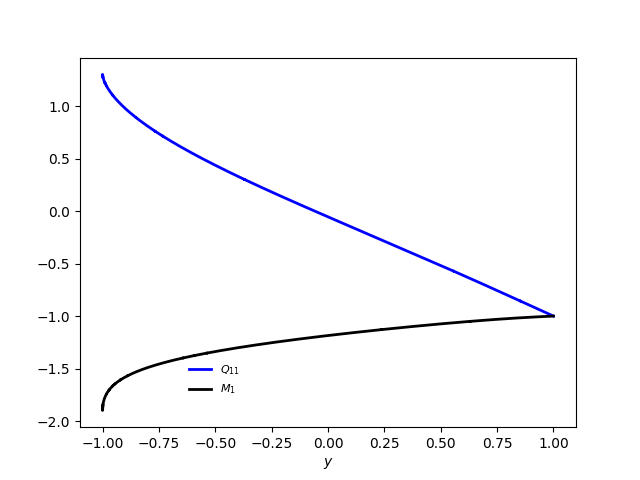}\\
        $d(\pvec^{**},\pvec_b(1))\approx 2.591.$
    \end{minipage}
    \caption{The profiles of $\pvec$ and their corresponding transition costs in \cref{eq:metrics}.}
    \label{fig:metric-profile}
\end{figure}




\subsection{Stability of OR solutions}
The authors in \cite{lamy-2014-article} and \cite{canevari_majumdar_spicer2016} consider a similar OR problem with $c=0$, 
in a one-dimensional channel and a two-dimensional square, respectively.
In both cases, the OR solution loses stability as $l$ decreases, or equivalently as the physical channel width $D$ increases, with respect to perturbations that have non-zero $Q_{12}$. This motivates us to expect a similar instability result in the ferronematic setting with $c>0$.

\begin{theorem}
    \label{thm:instability-OR}
    (Instability of the OR solution)
    For sufficiently small $l$ and a fixed positive $c$, the OR energy minimiser, $(\Qvec^{OR},\Mvec^{OR})$, is an unstable critical point of  \cref{eq:4}, in the admissible space \cref{eq:admissible_space}.
\end{theorem}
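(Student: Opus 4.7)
The plan is to exhibit an admissible perturbation $(0,\phi,0,\psi)$ of the OR critical point along which the second variation of $F$ is strictly negative for $l$ small. The argument combines the boundary-layer picture from \cref{thm:OR_limit_maps} with a localised Schr\"odinger-type analysis of the second variation, in the spirit of the pure-nematic instability argument of \cite{lamy-2014-article}.

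First, I would observe that the Hessian of $F$ at $(\Qvec^{OR},\Mvec^{OR})=(Q_{11}^{OR},0,M_1^{OR},0)$ is block-diagonal with respect to the splitting $(Q_{11},M_1)\oplus(Q_{12},M_2)$: every mixed second partial derivative of the bulk density $f$ in \cref{eq:bulk_energy_density} connecting the two blocks vanishes identically on the hyperplane $\{Q_{12}=M_2=0\}$. Since $(Q_{11}^{OR},M_1^{OR})$ minimises the reduced OR energy (\cref{thm:existence_OR}), the $(Q_{11},M_1)$-block is non-negative; any destabilising direction must therefore live in the $(Q_{12},M_2)$-block, whose quadratic form reads, for $\phi,\psi\in W_0^{1,2}((-1,1))$,
\begin{equation*}
\delta^2 F[\phi,\psi]=\int_{-1}^{1}\Big\{l\big((\phi')^2+(\psi')^2\big)+4\big((Q_{11}^{OR})^2-1\big)\phi^2+\big((M_1^{OR})^2-1+2cQ_{11}^{OR}\big)\psi^2-4cM_1^{OR}\phi\psi\Big\}\,\mathrm{d}y.
\end{equation*}

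Next, I would use \cref{thm:OR_limit_maps} to locate where the diagonal coefficient $4((Q_{11}^{OR})^2-1)$ of $\phi^2$ is strictly negative. As $l\to 0$ the OR minimiser converges to $(\rho^*,M^*)$ in $L^1$ with boundary layers of width $\mathcal O(\sqrt l)$ near $y=\pm 1$; at $y=1$ the Dirichlet value $(-1,-1)$ is far from $(\rho^*,M^*)$, so the rescaled profile $(\hat Q_l,\hat M_l)(z):=(Q_{11}^{OR},M_1^{OR})(1-z\sqrt l)$ -- which is uniformly bounded by \cref{thm:maximum-principle} -- converges by elliptic regularity to a smooth monotone solution $(\hat Q,\hat M)$ of the autonomous system obtained from \cref{eq:OR_equations} with $l=1$, satisfying $(\hat Q,\hat M)(0)=(-1,-1)$ and $(\hat Q,\hat M)(+\infty)=(\rho^*,M^*)$. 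Monotonicity yields a unique $z^{**}>0$ with $\hat Q(z^{**})=+1$, and on $(0,z^{**})$ the coefficient $4(\hat Q^2-1)$ is strictly negative. Picking a non-trivial $\eta\in C_c^\infty((0,z^{**}))$, setting $\phi(y):=\eta((1-y)/\sqrt l)$ and $\psi\equiv 0$, changing variables $z=(1-y)/\sqrt l$, and passing to the limit yields
\begin{equation*}
\frac{1}{\sqrt l}\,\delta^2 F[\phi,0]\;\xrightarrow[l\to 0]{}\;\mathcal J[\eta]:=\int_0^{z^{**}}\!\Big\{(\eta')^2+4\big(\hat Q(z)^2-1\big)\eta^2\Big\}\,\mathrm{d}z,
\end{equation*}
i.e.\ the Dirichlet quadratic form of the one-dimensional Schr\"odinger operator $-\partial_z^2+V$ on $(0,z^{**})$ with attractive potential $V(z):=4(\hat Q(z)^2-1)\le 0$.

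Testing with $\eta(z)=\sin(\pi z/z^{**})$ gives $\mathcal J[\eta]=\pi^2/(2z^{**})+\int_0^{z^{**}}\!4(\hat Q^2-1)\sin^2(\pi z/z^{**})\,\mathrm{d}z$, which is strictly negative as soon as the attractive well dominates the Dirichlet kinetic cost $\pi^2/(2z^{**})$. A $c$-dependent lower bound on $z^{**}$ is furnished by the $\Gamma$-convergence transition cost $d(\pvec^*,\pvec_b(1))\approx 3.967$ of \cref{fig:metric-profile}, since this cost is precisely the action of the heteroclinic $(\hat Q,\hat M)$. It then follows that $\delta^2 F[\phi,0]<0$ for all $l$ sufficiently small, the Hessian of $F$ at $(\Qvec^{OR},\Mvec^{OR})$ admits a negative eigenvalue, and the OR critical point is unstable. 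The principal obstacle is turning the heuristic ``the well is deep enough'' into a rigorous Dirichlet spectral estimate for $-\partial_z^2+V$ on $(0,z^{**})$; I would expect this to require either sharp asymptotics for the autonomous profile via its first integral of motion, or alternatively an indirect route using \cref{sec:convergence_full_problem}, where the smoothly rotating competitor \cref{eq:limit_map} has strictly smaller limiting energy than the OR minimiser, so $(\Qvec^{OR},\Mvec^{OR})$ cannot be a local minimiser of $F$, and together with the block decoupling this upgrades to the existence of a negative eigenvalue of the Hessian in the $(Q_{12},M_2)$-block.
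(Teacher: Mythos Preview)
Your block decomposition is correct, and your expression for the second variation on the $(Q_{12},M_2)$-block agrees with the paper's $H[h,w]$ in \cref{eq:2nd_variation}. From that point on, however, your strategy diverges from the paper's and runs into a genuine obstacle.

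You set $\psi\equiv 0$, localise $\phi$ in the boundary layer at $y=1$, and reduce to the Dirichlet Schr\"odinger problem $-\partial_z^2+4(\hat Q^2-1)$ on $(0,z^{**})$. As you yourself concede, the crux---that this operator has a negative eigenvalue---is never established. The well has depth at most $4$ and width $z^{**}$, and the sign of the ground-state energy depends on quantitative features of the heteroclinic that you do not control. The proposed cure via $d(\pvec^*,\pvec_b(1))$ fails because the metric \cref{eq:metric} is reparametrisation-invariant and carries no information about the \emph{physical length} $z^{**}$. The fallback via the rotating competitor of \cref{sec:convergence_full_problem} also falls short: a lower-energy competitor shows only that $(\Qvec^{OR},\Mvec^{OR})$ is not a \emph{global} minimiser; it does not supply a direction of negative second variation, and the inference ``not a local minimiser, hence Hessian indefinite'' is unjustified in both steps. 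Monotonicity of $\hat Q$ is likewise asserted without proof.

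The paper bypasses the spectral estimate entirely by a different---and two-component---test function. Following \cite{lamy-2014-article}, it takes $h=z\,\tau$ and $w=z\,\zeta$ with $\tau=\partial_y Q_{11}^{OR}$, $\zeta=\partial_y M_1^{OR}$ and $z$ a fixed interior cutoff. Because $(\tau,\zeta)$ solve the system obtained by differentiating \cref{eq:OR_equations}, substitution into $H[h,w]$ yields algebraic cancellations: after integrating by parts one finds $H=H_1+H_2+H_3$ with $H_2,H_3\to 0$ as $l\to 0$ and
\[
H_1=\int_{-1}^{1}\Big\{-8(Q_{11}^{OR})^2\tau^2 z^2+2\zeta^2 z^2\big(2cQ_{11}^{OR}-(M_1^{OR})^2\big)\Big\}\,\mathrm{d}y.
\]
The $\Gamma$-convergence result \cref{thm:OR_limit_maps} forces $Q_{11}^{OR}\to\rho^*$ and $(M_1^{OR})^2\to 1+2c\rho^*$ on the support of $z$, so the second bracket tends to $-1$ and both terms in the integrand become strictly negative. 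Instability follows from a sign check, with no eigenvalue estimate required. The idea you are missing is to use the full translation mode $(\tau,\zeta)$ of the coupled OR system rather than a one-component boundary-layer trial function; setting $\psi=0$ discards precisely the cancellation that makes the argument close.
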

\begin{proof}
    For the OR solution $(\Qvec^{OR}, \Mvec^{OR}) = (Q_{11}^*, 0, M_1^*, 0)$, we note that $(Q_{11}^*, M_1^*)$ is a minimiser of the OR energy \cref{eq:OR_energy}.
    Furthermore, the OR solution depends on $l$ with fixed $c>0$ and we suppress this explicit dependence for brevity.
    We compute the second variation of the free energy \cref{eq:4} about $\left(\Qvec^{OR}, \Mvec^{OR} \right)$ with arbitrary perturbations,
    \begin{equation*}
        \begin{aligned}
        &\tilde{Q}_{11}(y) = Q_{11}^*(y)+ t g(y),\;
        \tilde{Q}_{12}(y) = t h(y),\;\\
        &\tilde{M}_1(y) = M_1^*(y) + t v(y),\;
        \tilde{M}_2(y) =  t w(y).
        \end{aligned}
    \end{equation*}
    Here, $t\in\mathbb{R}$ and $g(y) = h(y) = v(y) = w(y) = 0$ at $y=\pm 1$.
    The second variation is then given by
    \begin{equation}
    \label{eq:2nd_variation}
    \begin{aligned}
    \delta^2 F[g,h,v,w] &\coloneqq \frac{\mathrm{d}^2}{\mathrm{d}t^2}\biggr|_{t=0}F(\tilde{Q}_{11},\tilde{Q}_{12},\tilde{M}_1,\tilde{M}_2)\\
                        &=\delta^2 E [g,v] + \int_{-1}^{1} \Bigg\{ l\left(\frac{\mathrm{d}h}{\mathrm{d}y} \right)^2 +  l\left(\frac{\mathrm{d}w}{\mathrm{d}y} \right)^2 + 4h^2((Q_{11}^*)^2 - 1) \\
                         & \quad + w^2 ((M_1^*)^2 - 1) + 2c w^2 Q_{11}^* - 4 c h w M_1^*\Bigg\}~\mathrm{d}y\\
                         &\eqqcolon \delta^2 E [g,v] + H[h,w],
    \end{aligned}
    \end{equation}
    where $\delta^2 E[g,v]$ is the second variation of the OR energy \cref{eq:OR_energy} about $(Q_{11}^*, M_1^*)$, and thus necessarily non-negative for all admissible $(g,v)$.
    To demonstrate the instability of $(\Qvec^{OR}, \Mvec^{OR})$, we need to construct non-trivial $h$ and $w$ such that $H[h,w]<0$.
    To this end, we follow methods in \cite{lamy-2014-article} and choose
    \begin{equation}
        \label{eq:h_w}
        h(y) = \frac{\mathrm{d} Q_{11}^*}{\mathrm{d}y} z(y)\eqqcolon \tau(y)z(y), \qquad w(y) = \frac{\mathrm{d} M_1^*}{\mathrm{d}y} z(y)\eqqcolon \zeta(y)z(y),
    \end{equation}
    where $z$ is a smooth cut-off function with bounded derivatives (independent of $l$) and $z(y)=0$ for $|y| > 1 - \eta$, $0<\eta < \frac{1}{4}$. 
Since $h$ and $w$ vanish at $y=\pm 1$, we have
\[
    \int_{-1}^{1} \left(h^{'} \right)^2~ \mathrm{d}y = -\int_{-1}^{1} h h^{''} 
    ~\mathrm{d}y \quad 
\textrm{and} \quad 
\int_{-1}^{1} \left(w^{'} \right)^2~\mathrm{d}y = -\int_{-1}^{1} w w^{''} 
    ~\mathrm{d}y.
\]
Here and hereafter, we take $'$ (resp.\ $''$) to denote first (resp.\ second) derivative with respect to $y$.
    Furthermore, one can check from \cref{eq:OR_equations} that
    \begin{equation}
    \label{eq:h_w1}
    \begin{aligned}
    & 
    \tau^{'}= \frac{1}{l} \left[ 4Q_{11}^*\left((Q^*_{11})^2 - 1 \right) - c (M_1^*)^2 \right],
    &&
    \tau^{''}= \frac{1}{l} \left[ 4\tau\left(3 (Q^*_{11})^2 - 1 \right) - 2 c M_1^* \zeta \right],\\
    &
    \zeta^{'} = \frac{1}{l} \left[ M_1^*\left((M_1^*)^2 - 1 \right) - 2 c Q_{11}^* M^*_1 \right],
    &&
    \zeta^{''} = \frac{1}{l} \left[ \zeta\left(3 (M_1^*)^2 - 1 \right) - 2 cM^*_1 \tau - 2c Q_{11}^*\zeta \right].
    \end{aligned}
\end{equation}
Now noting $h^{''} = \tau^{''} z + 2 \tau^{'} z^{'} + \tau z^{''}$, $w^{''} = \zeta^{''} z +2\zeta^{'}z^{'}+\zeta z^{''}$ and substituting \cref{eq:h_w} and \cref{eq:h_w1} into $H[h,w]$, we obtain
\begin{equation}
    \label{eq:h_w2}
\begin{aligned}
    H[h,w]= &\int_{-1}^{1}\left\{ -8 (Q_{11}^*)^2 \tau^2 z^2 + 2\zeta^2 z^2 \left(2c Q_{11}^* - (M_1^*)^2 \right) \right\}~\mathrm{d}y \\
                    & + l\int_{-1}^{1}\left\{ -2 z z^{'} \tau \tau^\prime - 2 z z^{'}\zeta \zeta^\prime\right\}~\mathrm{d}y+ \int_{-1}^{1}\left\{ - l z z^{''} \left(\tau^2 + \zeta^2 \right)\right\}~\mathrm{d}y \\
    \eqqcolon & H_1+H_2+H_3.
\end{aligned}
\end{equation}

The $\Gamma$-convergence result in \cref{thm:OR_limit_maps} implies that for an interior interval $(a,b) \subset [-1,1]$, it holds that
\begin{equation}
    \label{eq:gamma-conv-result}
\int_{a}^{b} \left|{Q}_{11}^* - \rho^* \right|~\mathrm{d}y \to 0 \quad \text{and }
\int_{a}^{b} \left|({M}_1^*)^2 - 1 - 2c\rho^* \right|~\mathrm{d}y \to 0 \quad \textrm{as $l \to 0$}.
\end{equation}
We use integration by parts to obtain (recall that $l \int_{-1}^{1} \tau^2 + \zeta^2 dy \leq C\sqrt{l}$ as $l \to 0$ from the work in Section~$2.2$): 
\begin{equation*}
    \int_{-1}^1 \left\{ z z^{'}\tau \tau^{'}+z z^{'}\zeta \zeta^{'} \right\} ~\mathrm{d}y
    =-\frac{1}{2}\int_{-1}^{1} \left\{ \left(z^{'}\right)^2\left(\tau^2+\zeta^2\right)+z z^{''}\left(\tau^2+\zeta^2\right) \right\} ~\mathrm{d}y,
\end{equation*}
so that
$
    H_2 \to 0 \quad \text{as }l\to 0.
$
Moreover, it is straightforward to see that the third integral $H_3$ in \cref{eq:h_w2} vanishes in the $l\to 0$ limit.
It remains to estimate the first integral in \cref{eq:h_w2}.
By \cref{eq:gamma-conv-result}, we deduce that
\begin{equation*}
    H_1 \to \int_{-1}^{1} \left\{-8 \tau^2 z^2\left(\rho^*\right)^2 -2\zeta^2 z^2 \right\}~\mathrm{d}y < 0 \quad \text{as }l\to 0. 
\end{equation*}
\end{proof}

\section{Numerical results}
\label{sec:num}
In this section, we perform numerical experiments to validate our theoretical results and understand the interplay between $l$ and $c$ for the solution landscapes, with fixed $\xi=1$.
    For the visualisation, we plot the director $\mathbf{n}$ as rods and the normalised magnetisation vector field $\mvec=\frac{\Mvec}{|\Mvec|}$ as arrows.

\subsection{Solver details}

Since the boundary-value problem is nonlinear, we use Newton's method with $L^2$ linesearch \cite[Algorithm 2]{brune2015} as the outer nonlinear solver.
The nonlinear solver is deemed to have converged when the Euclidean norm of the residual falls below $10^{-8}$, or reduces from its initial value by a factor
of $10^{-6}$, whichever comes first.
For the inner solver, the linearised systems are solved using the sparse LU factorisation library MUMPS \cite{mumps}.
The solver described above is implemented in the \emph{Firedrake} \cite{firedrake} library, which relies on PETSc \cite{petsc} for solving the resulting linear systems.
Furthermore, we use the \emph{deflation} technique as described in \cite{farrell-birkisson-2015-article} to compute multiple solutions and bifurcation diagrams.
Throughout this section, we partition the whole interval $[-1,1]$ into $1000$ equi-distant subintervals and numerically approximate the solutions using $\mathbb{P}^1$ finite elements (piecewise linear continuous polynomials).

\textbf{Code availability.} For reproducibility and more details of the implementation, we have archived the solver code \cite{zenodo-ferronematics} and the exact version of Firedrake \cite{zenodo/firedrake-ferronematics} used to produce the numerical results of this work. An installation of Firedrake with components matching those used in this paper can be obtained by following the instructions at \url{https://www.firedrakeproject.org/download.html} with
\begin{verbatim}
    python3 firedrake-install --doi 10.5281/zenodo.4449535
\end{verbatim}
Defcon version \#aaa4ef should then be installed, as described in \url{https://bitbucket.org/pefarrell/defcon/}.

\subsection{OR solutions}
We have analysed the OR solution branch with $Q_{12} = M_2 = 0$, as $l\to 0$ and as $l\to \infty$.
The OR branch is fully characterised by solutions of the boundary-value problem \cref{eq:OR_equations}.
OR solutions are special since they must contain separate domain walls in $\Qvec$ and $\Mvec$, which can be tailored by varying $l$ and $c$. 

As $l \to \infty$, recall \cref{thm:existence_OR} to deduce that the OR solution branch is approximately given by
$\left(\Qvec^{OR}, \Mvec^{OR}\right) \approx (-y, 0, -y, 0)$, for a fixed $c$, and that $\left(\Qvec^{OR}, \Mvec^{OR}\right)$ is also the unique minimiser of both the OR energy \cref{eq:OR_energy} and the full energy \cref{eq:4}.
In \cref{fig:l10}, we plot the OR solution of \cref{eq:OR_equations} for $c=1$ and $l=10$.
The profile is indeed linear, and we do not numerically obtain any other solutions, supporting the uniqueness result in the $l\to\infty$ limit.
The OR solution vanishes at the channel centre $y=0$, i.e.~$Q_{11}(0) = M_1 (0) = 0$, and thus both the nematic and magnetic domain walls coincide at $y=0$.
Therefore, the normalised magnetisation vector $\mvec$ and director $\nvec$ have a jump discontinuity at $y=0$, i.e., $\mvec$ jumps from $\mvec=(1,0)$ for $y<0$ to $\mvec=(-1,0)$ for $y>0$, while $\nvec$ jumps from $\nvec=(1,0)$ (modulo a sign) for $y<0$ to $\nvec = (0,1)$ (modulo a sign) for $y>0$. 
We also plot the pointwise $L^\infty$ bound \cref{eq:OR_bound} as blue solid lines in \cref{fig:l10}, and this bound is indeed respected.

\begin{figure}[!ht]
    \centering
    \begin{minipage}{1.0\textwidth}
        \centering
        \includegraphics[scale=0.35]{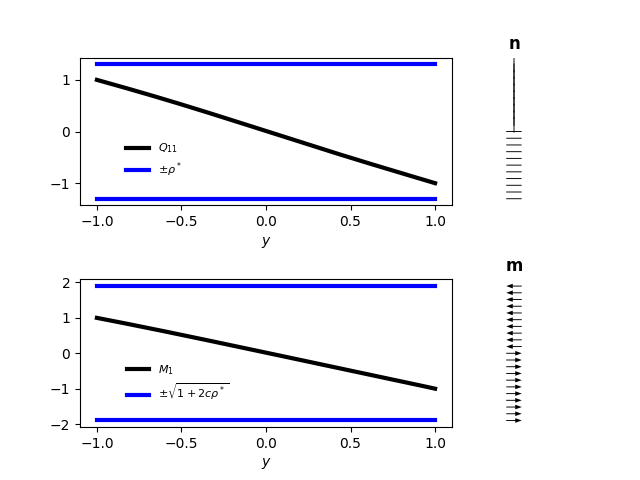}
    \end{minipage}
    \caption{The only (stable) solution of \cref{eq:OR_energy} for $c=\xi=1$, and $l=10$.}
    \label{fig:l10}
\end{figure}

As $l\to 0$, for fixed $c>0$, we expect $Q_{11} \to \rho^*$ and $M_1^2 \to 1 + 2c \rho^*$ uniformly away from $y=\pm 1$, for the OR energy minimiser in \cref{eq:OR_energy}. 
We note that $\rho^*(c) $ defined in \cref{eq:rho_max} is an increasing function of $c$ and $\rho^*(0)=1$, thus $\rho^*(c) > 1$ for all $c>0$.
As discussed in \cref{thm:OR_limit_maps}, we expect a domain wall in $\Qvec$ near the edge $y=1$, within a boundary layer of width $\sqrt{l}$, where $Q_{11}$ jumps from $Q_{11}=\rho^*>1$ to $Q_{11}(1) = -1$.
Hence, there necessarily exists a nematic domain wall with $Q_{11}=0$, within this boundary layer close to  $y=1$.
Analogously, there is a boundary layer near the other end point $y=-1$, within which $Q_{11}$ jumps from $Q_{11}(-1)=1$ to $Q_{11} =\rho^*$, but this boundary layer does not contain a nematic domain wall.
Moreover, we expect that there are at least two minimisers of the OR energy \cref{eq:OR_energy} for $l$ small enough, with opposite signs of $M_1$ in the channel interior. 
Each of these minimisers must contain at least one magnetic domain wall: near $y=1$ if $M_1 >0$ in the interior, or near $y=-1$ if $M_1<0$ in the interior respectively.
In what follows, a transition layer refers to a thin interval within which $M_1$ jumps between $-\sqrt{1 + 2c\rho^*}$ and $\sqrt{1 + 2c\rho^*}$ and each of these transition layers necessarily contains a magnetic domain wall with $M_1 = M_2 =0$.
We expect the OR energy \cref{eq:OR_energy} to have multiple critical points, with multiple interior transition layers and domain walls in $\Qvec$ and $\Mvec$, for $l$ small enough. However, we only expect two OR energy minimisers, that have the same $Q_{11}$ profile but differ in the sign of $M_1$, and the nematic and magnetic domain walls do not occur at the same point. Of course, all OR solutions are unstable critical points of the full energy \cref{eq:4} for $l$ small enough, as proven in \cref{thm:instability-OR}.
We now numerically corroborate these theoretical conjectures with $l=0.01$ and $\xi=1$.

In \cref{fig:c1}, we present four example solutions with $c=1$.
In fact, they are all unstable critical points of the full energy \cref{eq:4} whilst being stable critical points of the OR energy \cref{eq:OR_energy} (in the sense that the Hessian of second variation of the OR energy about these critical points has positive eigenvalues).
As expected, these solution profiles, $(Q_{11}, M_1)$, have boundary layers near the end points
. Furthermore, interior transition layers in $M_1$ (near the centre $y=0$) are observed in Solutions $3$ and $4$.
The $L^\infty$ bounds \cref{eq:OR_bound} (blue solid line) for $|Q_{11}|$ and $|M_1|$ are also satisfied.

\begin{figure}[!ht]
    \centering
    \begin{minipage}{0.45\textwidth}
        \centering
        \includegraphics[width=1.0\textwidth]{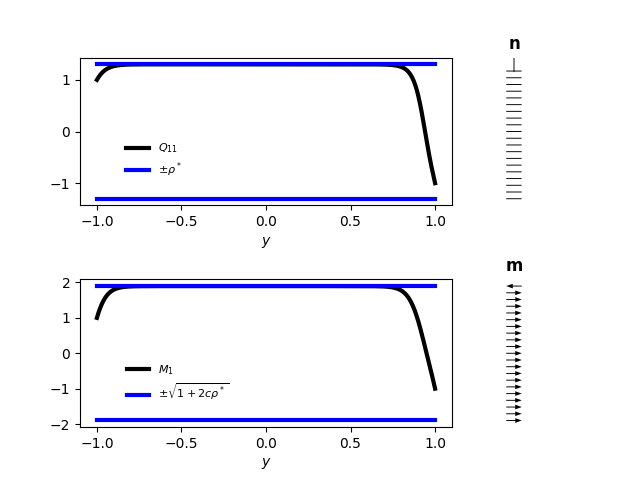}\\
        (Solution $1$)
    \end{minipage}
    \begin{minipage}{0.45\textwidth}
        \centering
        \includegraphics[width=1.0\textwidth]{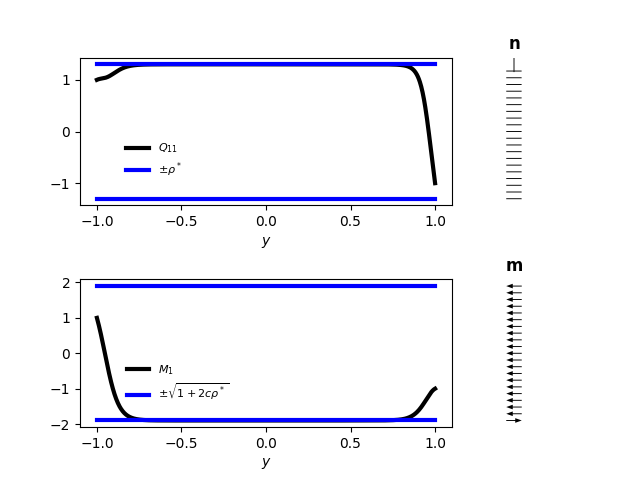}\\
        (Solution $2$)
    \end{minipage}
    \begin{minipage}{0.45\textwidth}
        \centering
        \includegraphics[width=1.0\textwidth]{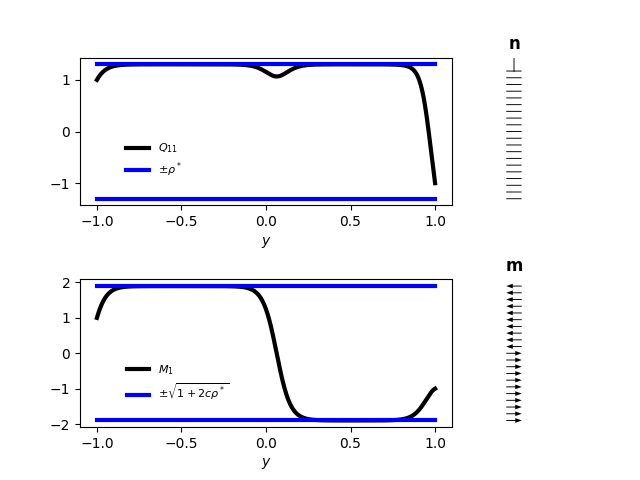}\\
        (Solution $3$)
    \end{minipage}
    \begin{minipage}{0.45\textwidth}
        \centering
        \includegraphics[width=1.0\textwidth]{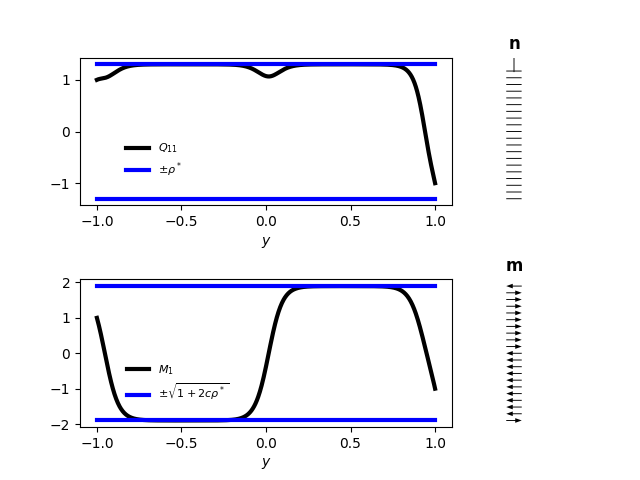}\\
        (Solution $4$)
    \end{minipage}
    \caption{Four stable OR critical points of \cref{eq:OR_energy}, 
        with $c=\xi=1$ and $l=0.01$.
         Solution $1$ is the minimiser of the OR energy \cref{eq:OR_energy}.
    }
    \label{fig:c1}
\end{figure}

In \cref{fig:c5}, we plot the stable solutions of the OR energy \cref{eq:OR_energy}, for a larger value $c=5$,
which are unstable critical points of the full energy \cref{eq:4}.
Indeed, each of the solutions in \cref{fig:c5} has one unstable eigendirection, in the context of the full energy \cref{eq:4}.
The two profiles in \cref{fig:c5}, have boundary layers near $y=\pm 1$, and essentially differ in the sign of $M_1$ in the interior; $Q_{11}$ only vanishes near $y=1$ as predicted by the $\Gamma$-convergence analysis, so that we have a nematic domain wall near $y=1$
. On the other hand, $M_1$ can vanish either near $y=-1$ or near $y=1$, so that the corresponding magnetic domain wall can occur near either boundary.
Additionally, there are other solutions with interior transition layers for $M_1$, see two examples in \cref{fig:c5-l001-multilayer} where single and multiple interior transition layers in $M_1$ are observed.
They are also stable critical points of the OR energy \cref{eq:OR_energy}. 
The transition layers in $M_{1}$ necessarily contain a magnetic domain wall with $M_1 =0$, and these interior magnetic domain walls are not accompanied by associated nematic domain walls.
Moreover, solutions with interior transition layers have higher OR energy \cref{eq:OR_energy} than solutions without interior transition layers in \cref{fig:c5}, since each transition layer has an energetic cost of $d(\pvec^*, \pvec^{**})$ as explained in \cref{thm:OR_limit_maps}.

\begin{figure}[!ht]
    \centering
    \begin{minipage}{0.45\textwidth}
        \centering
        \includegraphics[width=1.0\textwidth]{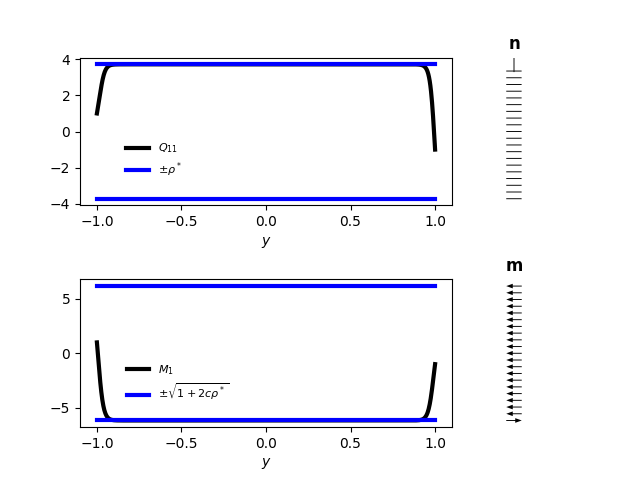}
    \end{minipage}
    \begin{minipage}{0.45\textwidth}
        \centering
        \includegraphics[width=1.0\textwidth]{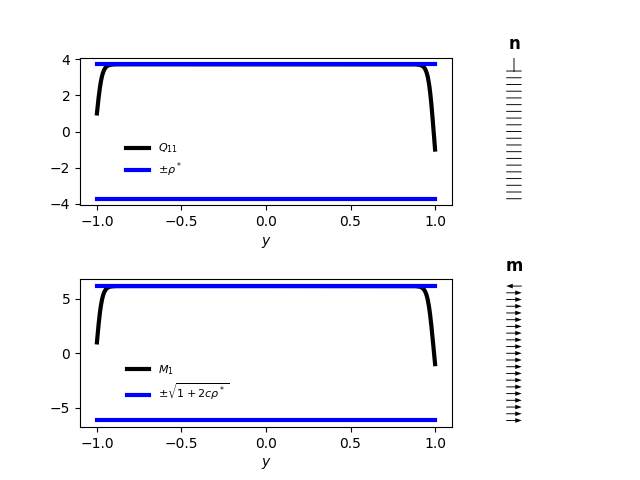}
    \end{minipage}
    \caption{Two stable OR critical points of \cref{eq:OR_energy}, for $c=5$, $\xi=1$ and $l=0.01$.
The right profile has lower OR energy than the left profile and the solutions in \cref{fig:c5-l001-multilayer}.}
    \label{fig:c5}
\end{figure}

\begin{figure}[!ht]
    \centering
    \begin{minipage}{0.44\textwidth}
        \centering
        \includegraphics[width=1.0\textwidth]{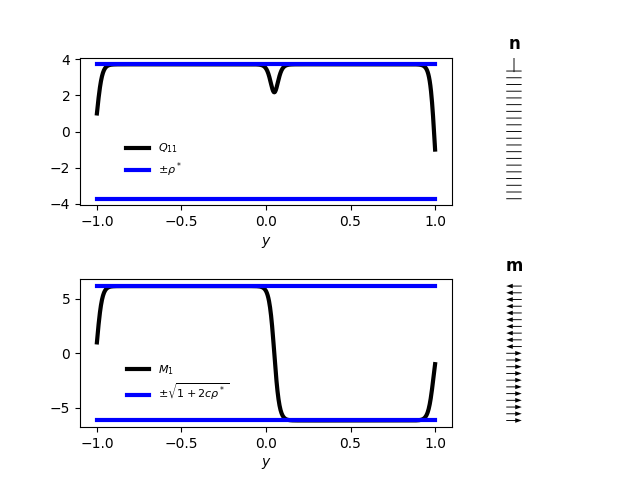}
    \end{minipage}
    \begin{minipage}{0.44\textwidth}
        \centering
        \includegraphics[width=1.0\textwidth]{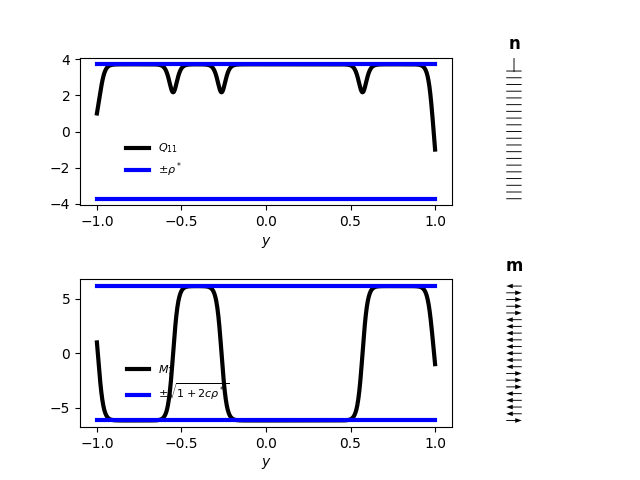}
    \end{minipage}
    \caption{Two stable OR critical points of \cref{eq:OR_energy}, with single (left) and multiple (right) interior transition layers for $c=5$, $\xi=1$ and $l=0.01$.
    The left has lower OR energy.
}
    \label{fig:c5-l001-multilayer}
\end{figure}


These numerical experiments illustrate that we can manipulate the location and multiplicity of nematic and magnetic domain walls in the OR solutions by varying $l$, e.g., the domain walls in the OR energy minimisers migrate from the channel centre to the channel boundaries at $y=\pm 1$, as $l$ decreases.


\subsection{Solutions of the full problem}
\label{sec:num-full}
Next, we consider the full problem \cref{eq:Q11}-\cref{eq:M2} with four degrees of freedom, $\left(Q_{11}, Q_{12}, M_1, M_2\right)$.
We only consider the case of small $l_1=l_2=l = 0.01$ with $\xi=1$, since the OR solution branch is the unique solution of the full problem, in the $l\to \infty$ limit. 

In \cref{fig:c1-full}, we take $c=1$ and present four examples of stable solutions with four degrees of freedom. 
We also plot the $L^\infty$ bound \cref{eq:bound} in the figures, illustrating that \cref{thm:maximum-principle} is indeed satisfied.
There are no interior domain walls with $|\Qvec|=|\Mvec|=0$, for small $l$, as discussed in \cref{sec:convergence_full_problem}.
Furthermore, Solutions $1$, $2$ and $3$ in \cref{fig:c1-full} only have boundary layers, with almost constant $|\Qvec|, |\Mvec|$-profiles in the domain interior, whereas Solution $4$ has interior non-zero local minima in $|\Qvec|$ and $|\Mvec|$.
Solutions $1$ and $2$ are the energy minimisers while the remaining two profiles are non-minimising stable critical points of the full energy \cref{eq:4}.
Note that the two energy minimisers differ in their $\mvec$-patterns (more precisely, the sign of $M_2$).
Moreover, we compute the values of $\theta$, $\phi$ defined to be
\begin{equation}
    {\theta} = \arctan\left(\frac{Q_{12}}{Q_{11}}\right), \; {\phi}=\arctan\left(\frac{M_2}{M_1}\right)
\end{equation}
for each solution. 
It can be seen that $|\Qvec| \to \rho^*$ and $|\Mvec|^2 \to 1 + 2c\rho^*$ for the energy minimiser (Solution $1$),
whereas $(2\phi - \theta)$ tends to an even multiple of $\pi$ almost everywhere, except near $y=\pm 1$.
We do not attempt to explain the interior jumps in the plots of $(2\phi - \theta)$, except that these jumps will have a distinct optical signature in physical experiments.
Furthermore, the separate plots of $\theta$ and $\phi$ demonstrate linear behaviour except around the local minima of $|\Qvec|$ or $|\Mvec|$ and the boundary layers, consistent with the limiting Laplace equation \cref{eq:relation-theta-phi} for $\phi$ and $\theta$, in the $l \to 0$ limit.

\begin{figure}[!ht]
    \centering
    \begin{minipage}{0.49\textwidth}
        \centering
        \includegraphics[width=0.80\textwidth]{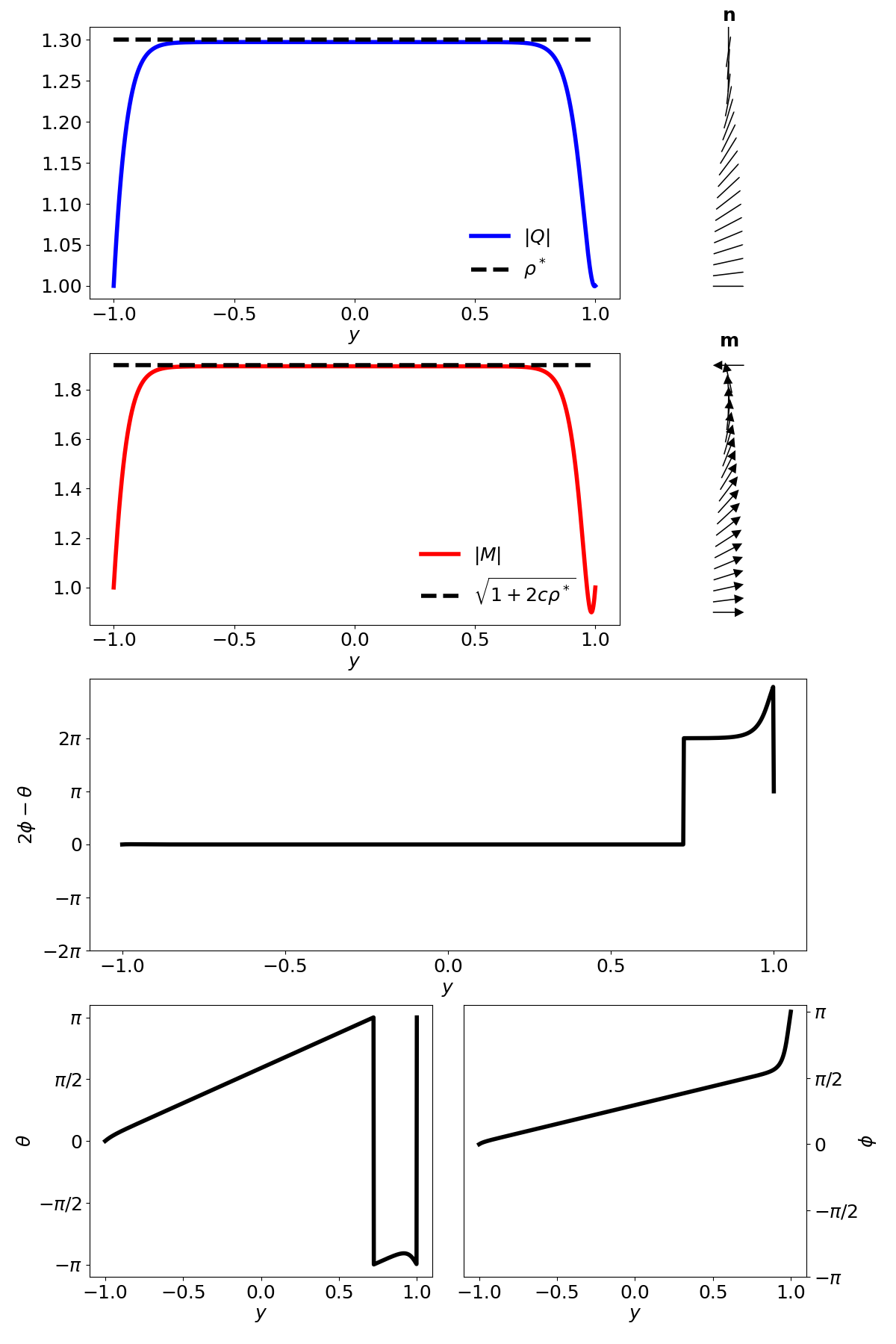}\\
        (Solution $1$; stable)
    \end{minipage}
    \begin{minipage}{0.49\textwidth}
        \centering
        \includegraphics[width=0.80\textwidth]{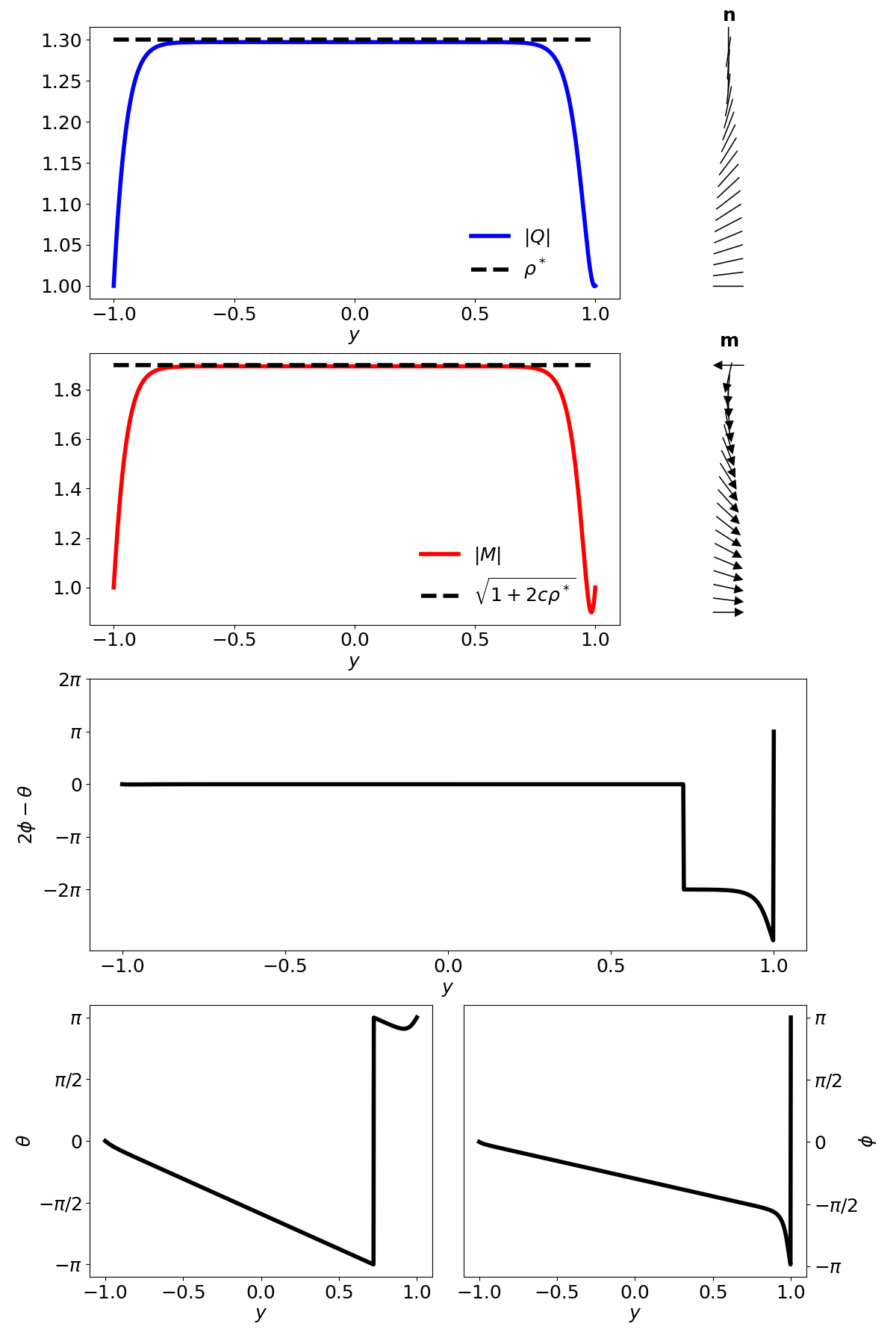}\\
        (Solution $2$; stable)
    \end{minipage}
    \begin{minipage}{0.49\textwidth}
        \centering
        \includegraphics[width=0.80\textwidth]{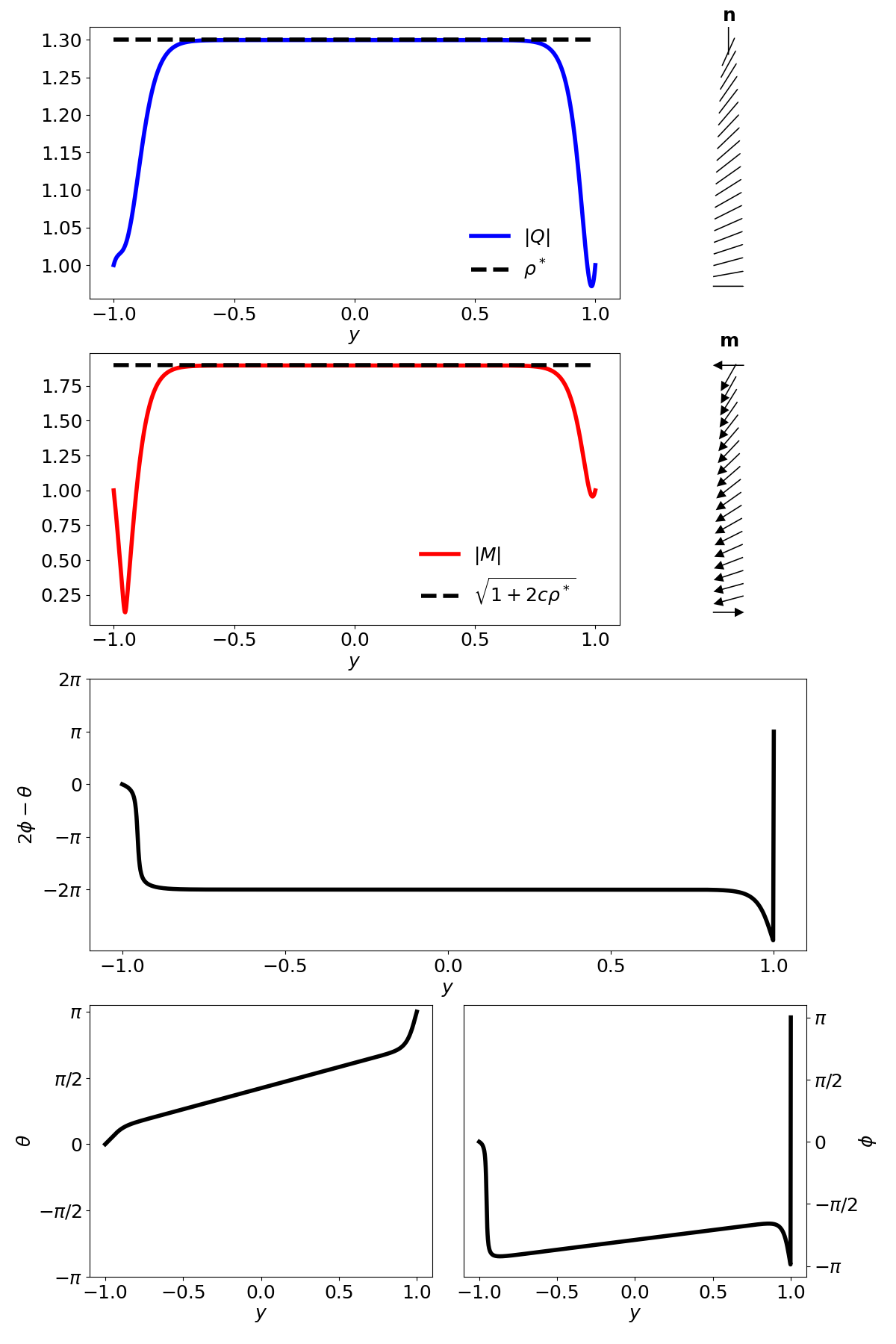}\\
        (Solution $3$; stable)
    \end{minipage}
    \begin{minipage}{0.49\textwidth}
        \centering
        \includegraphics[width=0.80\textwidth]{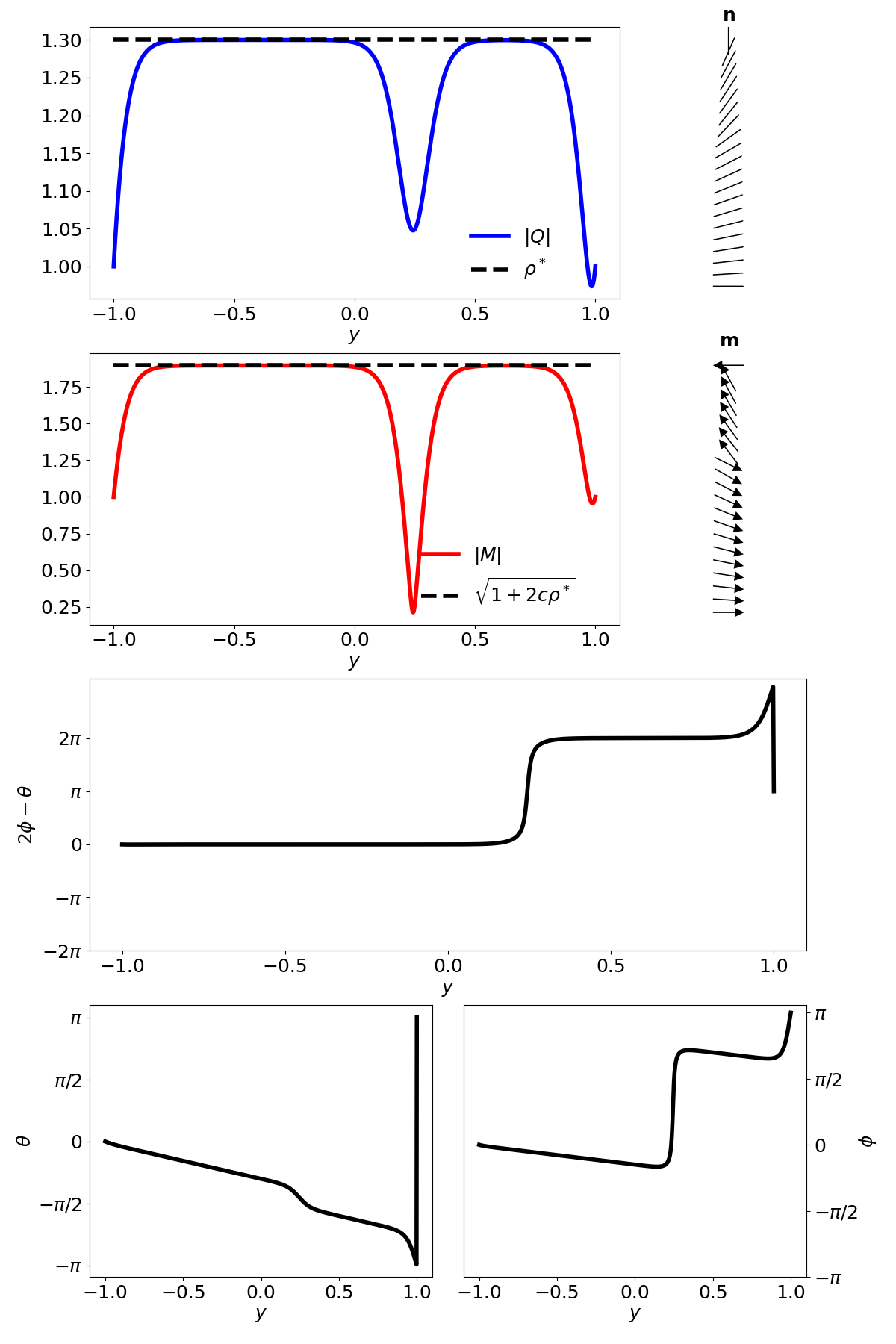}\\
        (Solution $4$; stable)
    \end{minipage}
    \caption{Four stable stationary profiles, $(Q_{11}, Q_{12}, M_1, M_2)$, of \cref{eq:4} with $l=0.01$ and $c=\xi=1$, along with plots of $(2\phi-\theta),\;\theta$ and $\phi$ to verify the relation \cref{eq:relation-theta-phi}.
Solutions $1$ and $2$ have the lowest full energy value \cref{eq:4}.}
    \label{fig:c1-full}
\end{figure}

Now, we repeat the simulations for $c=5$.
Two stable stationary profiles are illustrated in \cref{fig:c5-full}.
We see that $|\Qvec| \to \rho^*$ and $|\Mvec|^2 \to 1 + 2c\rho^*$ almost everywhere, as expected.
Here, Solution $2$ has lower energy than Solution $1$, since Solution $1$ has more local minima in $|\Qvec|$ and $|\Mvec|$ than Solution $2$.
Further, $(2\phi - \theta)$ is an even multiple of $\pi$ almost everywhere, with the jumps being associated with the local minima in $|\Qvec|$ and $|\Mvec|$, thus verifying  \cref{eq:constraint-theta-phi}.
Additionally, we plot $\phi$ and $\theta$ in \cref{fig:c5-full}, and observe almost linear profiles, except around the local minima and boundary layers. To summarise, the numerical experiments and the heuristics in Section~\ref{sec:convergence_full_problem} suggest that there are at least two energy minimisers, characterised by $(\rho_1, \sigma_1, \theta_1, \phi_1)$ and $(\rho_2, \sigma_2, \theta_2, \phi_2)$ of (\ref{eq:4}) in the $l \to 0$ limit,  such that $\rho_1, \rho_2 \to \rho^*$, $\sigma_{1,2}^2 \to 1 + 2c\rho^*$ almost everywhere away from $y=\pm 1$, $\theta_2 = -\theta_1$, $\phi_2 = -\phi_1$, with no domain walls and $2\phi_{1,2} - \theta_{1,2} $ an even multiple of $\pi$ except near $y=1$. The two energy minimisers differ in their sense of rotation, in $\nvec$ and $\mvec$, between $y=\pm 1$.

\begin{figure}[!ht]
    \centering
    \begin{minipage}{0.49\textwidth}
        \centering
        \includegraphics[width=0.80\textwidth]{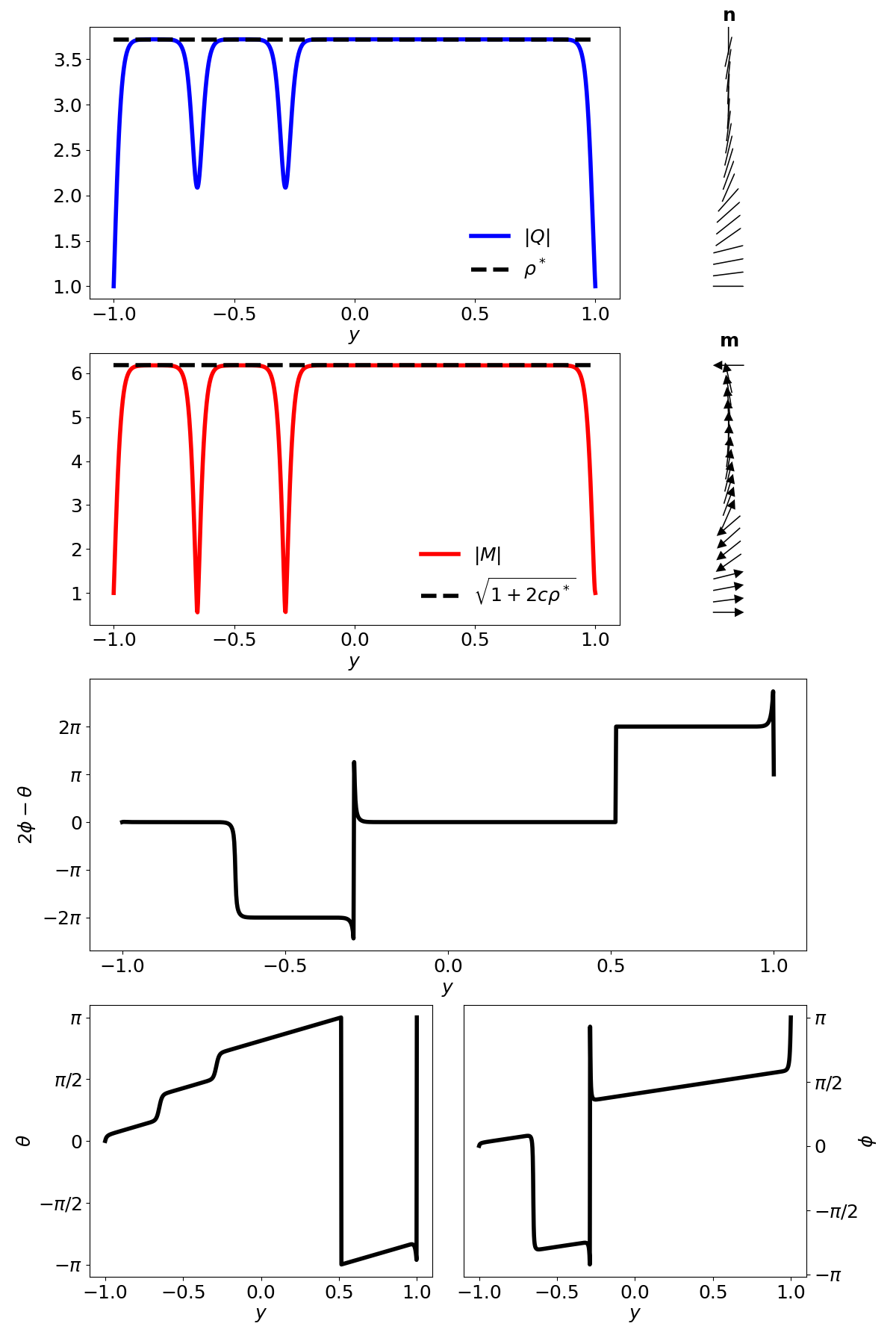}\\
        (Solution $1$; stable)
    \end{minipage}
    \begin{minipage}{0.49\textwidth}
        \centering
        \includegraphics[width=0.80\textwidth]{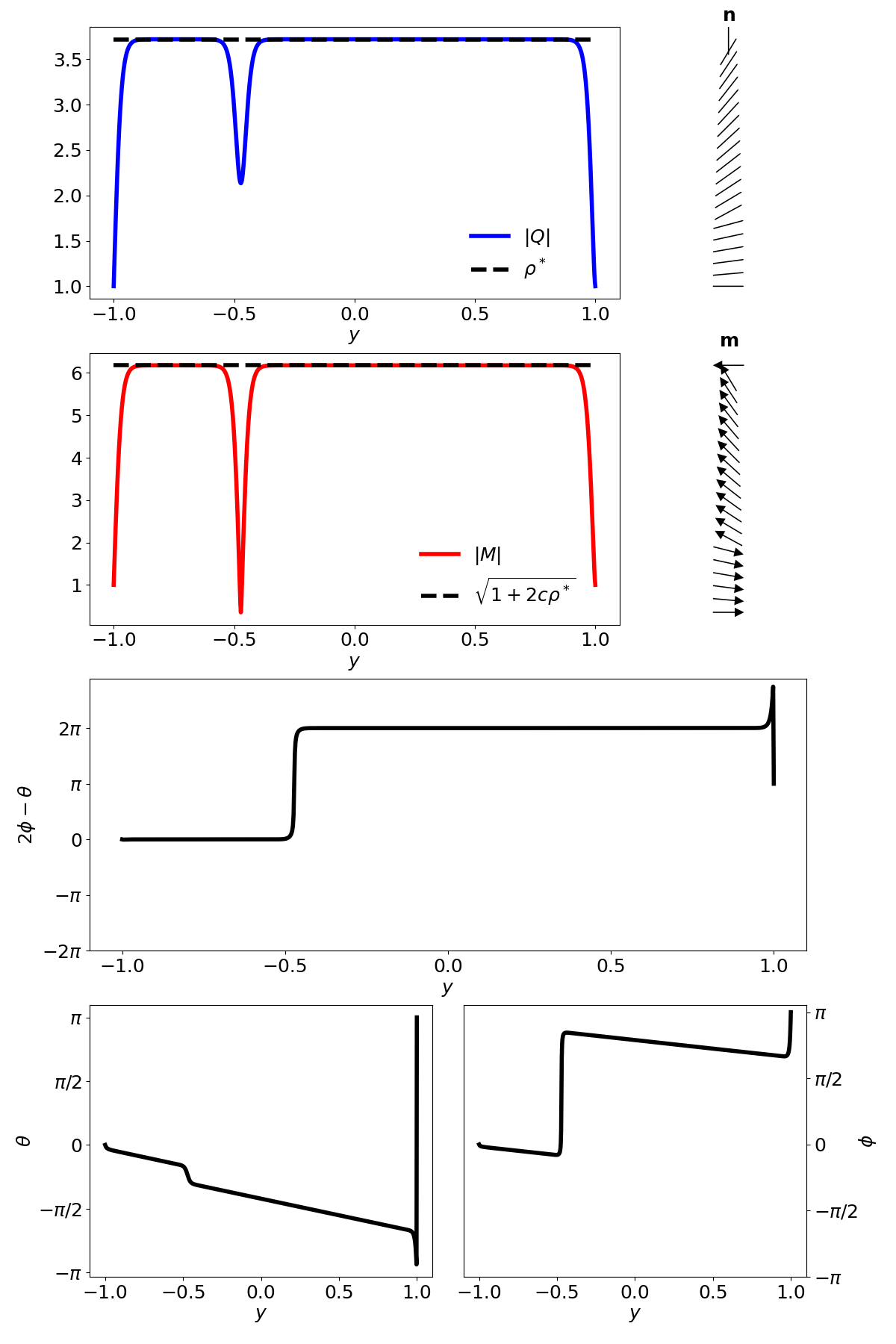}\\
        (Solution $2$; stable)
    \end{minipage}
    \caption{Two examples of stable stationary profiles $(Q_{11}, Q_{12}, M_1, M_2)$ of the full energy \cref{eq:4} with $l=0.01$, $c=5$ and $\xi=1$.
Solution $2$ has lower energy than Solution $1$.}
    \label{fig:c5-full}
\end{figure}

\subsection{Bifurcation diagram with continuing $l$}
We vary $l_1=l_2=l\in [0.2,3.0]$ with step size $0.01$ and $c=1$ in \cref{fig:lcont}.
There is only one stable OR solution for $l\in [1.25, 3.0]$, being the energy minimiser of the full energy \cref{eq:4}.
For $l\approx 1.25$, there is a pitchfork bifurcation consisting of two stable solution branches and one unstable OR branch (also see \cref{fig:l1}). 
In fact, the two stable solutions (Solutions $1$ and $3$ in \cref{fig:l1}) differ by the sign of $Q_{12}$ and $M_2$, i.e., for every solution branch, $(Q_{11}, Q_{12}, M_1, M_2)$, there exists another solution branch with $(Q_{11}, - Q_{12}, M_1, -M_2 )$.
The stable solution branches correspond to a smooth rotation in $\nvec$, between $y=\pm 1$ and are actually the global energy minimisers for $l\le 1.25$.
\begin{figure}[!ht]
    \centering
    \begin{minipage}{0.49\textwidth}
        \centering
        \includegraphics[width=0.8\textwidth]{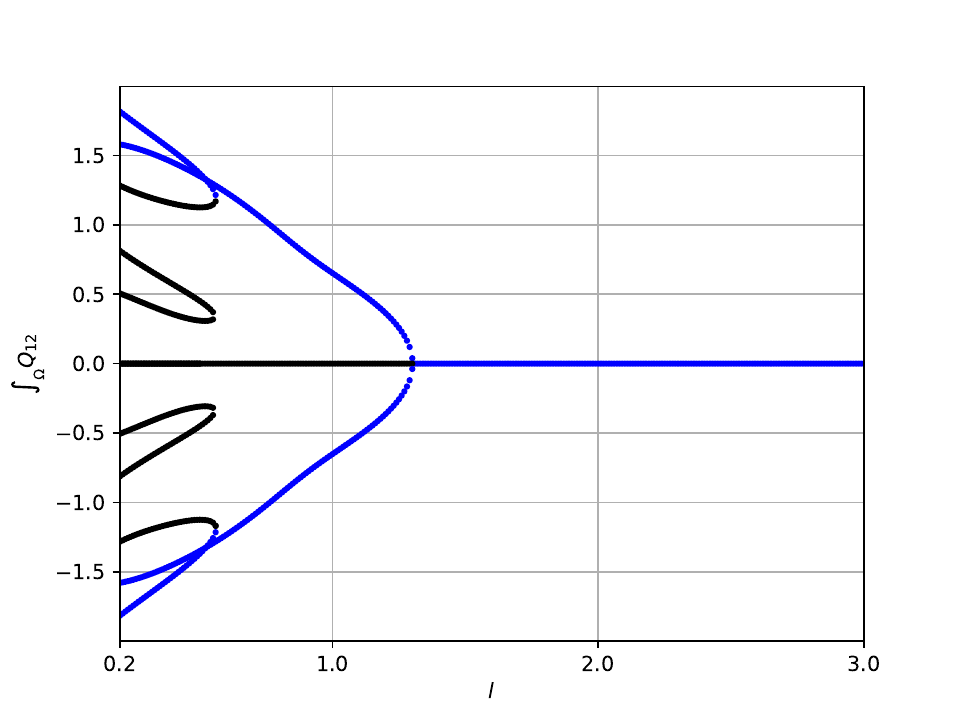}
    \end{minipage}
    \begin{minipage}{0.49\textwidth}
        \centering
        \includegraphics[width=0.8\textwidth]{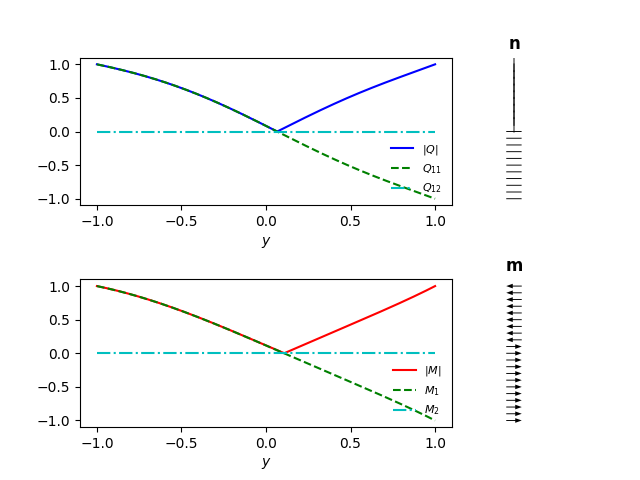}
    \end{minipage}
    \caption{Left: the bifurcation diagram of continuing $l_1=l_2=l\in[0.2,3.0]$ with fixed $c=\xi=1$; here, black represents unstable solutions while blue indicates stable solutions.
        Right: the stable solution for $l=2$.}
    \label{fig:lcont}
\end{figure}

\begin{figure}[!ht]
    \centering
    \begin{minipage}{0.32\textwidth}
        \centering
        \includegraphics[width=1.0\textwidth]{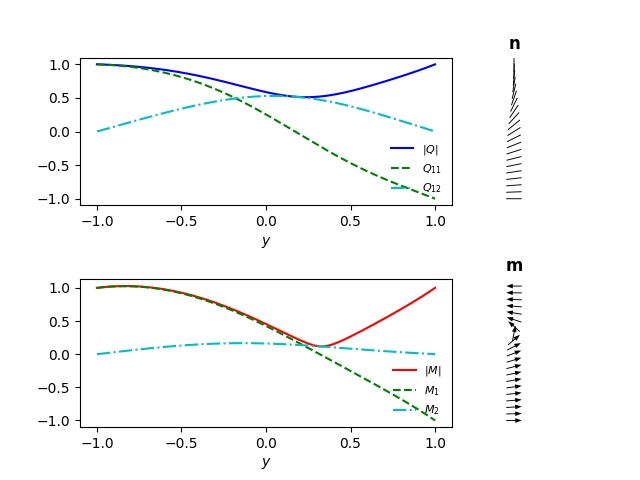}\\
        (Solution $1$; stable)
    \end{minipage}
    \begin{minipage}{0.32\textwidth}
        \centering
        \includegraphics[width=1.0\textwidth]{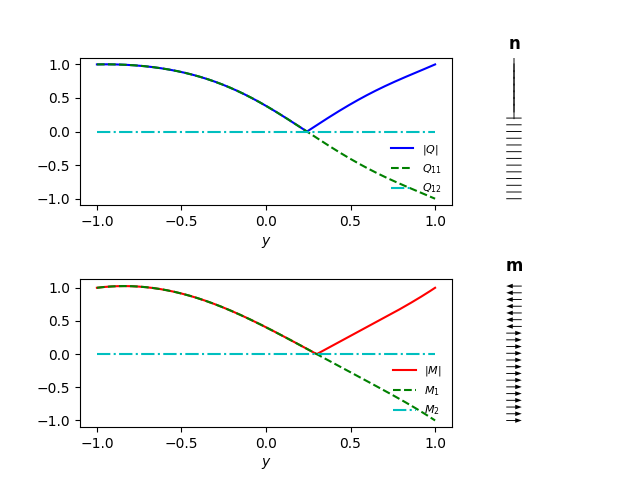}\\
        (Solution $2$; unstable)
    \end{minipage}
    \begin{minipage}{0.32\textwidth}
        \centering
        \includegraphics[width=1.0\textwidth]{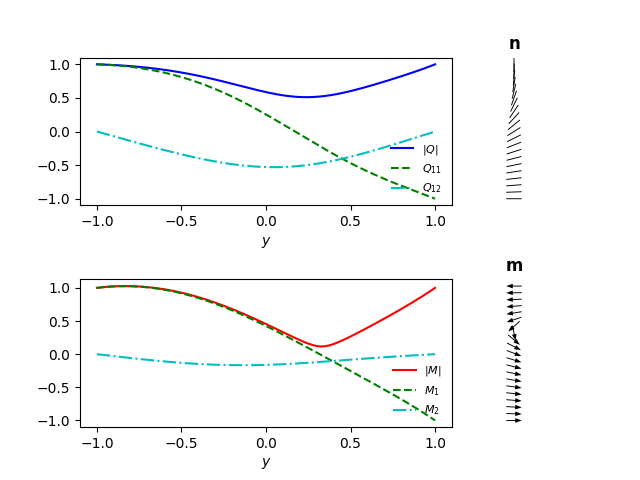}\\
        (Solution $3$; stable)
    \end{minipage}
    \caption{Three solutions for $l=1$ in \cref{fig:lcont}.
    Solutions $1$ and $3$ are global energy minimisers.}
    \label{fig:l1}
\end{figure}

As $l$ becomes smaller, more (stable or unstable) solutions are found.
More specifically, there are four disconnected bifurcations appearing around $l=0.55$, giving two further stable solutions, which are also local energy minimisers (see \cref{fig:l0.2} for an illustration) for $l\in [0.2,0.55]$.
Again, they only differ by the sign of $Q_{12}$ and $M_2$.
In \cref{fig:l0.2}, we plot two examples of newly found stable solution profiles.
The stable solutions typically correspond to a smooth $\nvec$-profiles with minimal rotation (minimal topological degree consistent with the boundary conditions),
while the stable normalised magnetisation profiles $\mvec$ are also smooth, except near $y=\pm 1$.

\begin{figure}[!ht]
    \centering
    \begin{minipage}{0.32\textwidth}
        \centering
        \includegraphics[width=1.0\textwidth]{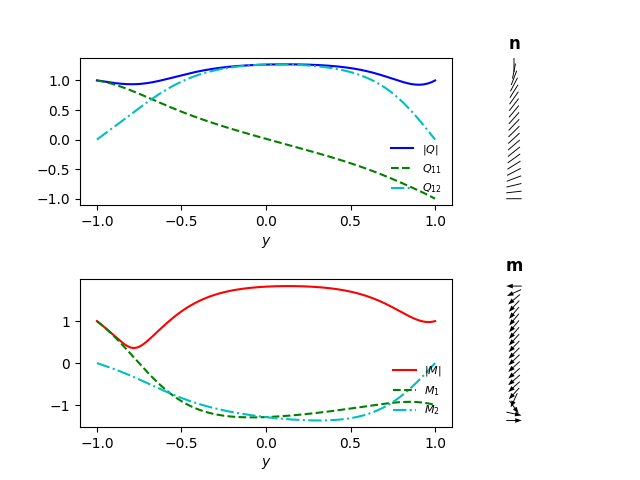}\\
        (Solution $1$; stable)
    \end{minipage}
    \begin{minipage}{0.32\textwidth}
        \centering
        \includegraphics[width=1.0\textwidth]{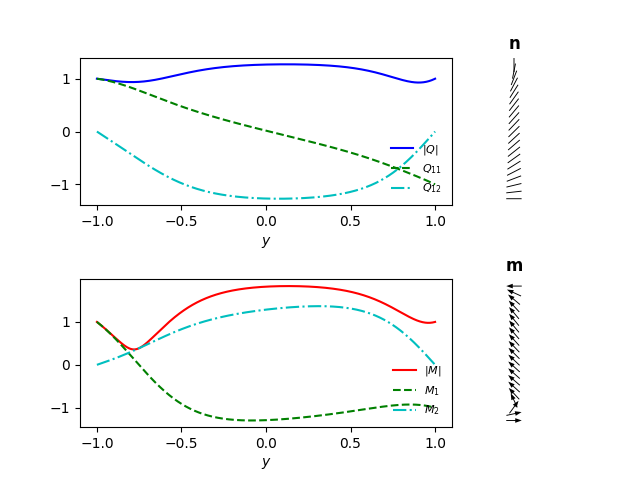}\\
        (Solution $4$; stable)
    \end{minipage}
    \caption{Two examples of new stable solutions for $l=0.2$ in \cref{fig:lcont}.
    They are global energy minimisers.}
    \label{fig:l0.2}
\end{figure}

We next consider the case of $c=5$, by numerically computing a bifurcation diagram  in \cref{fig:bif-c5}, for the solutions of \cref{eq:Q11}-\cref{eq:M2}, by continuing $l\in [3,5]$ with a step size of $0.015$. 
The globally stable OR solution is shown in \cref{fig:bif-c5} and it loses stability at the pitchfork bifurcation point $l\approx 4.44$, leading to two new stable branches (see illustrations in \cref{fig:l443} for $l=4.43$).
The new stable solutions only differ in the signs of $Q_{12}$ and $M_2$ and are in fact, energy minimisers for $l\le 4.34$.
Thus, the qualitative features of the bifurcation diagram are unchanged by increasing $c$, but the OR solution branch loses stability for $l < l^*(c)$, where $l^*(c) $ is an increasing function of $c$. Hence, as $c$ increases, OR solutions are increasingly difficult to find owing to their shrinking window of stability.

\begin{remark}
    We comment on the two folds in the bifurcation diagram \cref{fig:bif-c5}.
    They do not represent the same solution branch at the intersection points.
    Instead, they are just overlapping points in this plot of $\int_\Omega Q_{12}$ versus $l$. A different functional may yield a bifurcation diagram without these intersection points.
\end{remark}

\begin{figure}[!ht]
    \centering
    \begin{minipage}{0.49\textwidth}
        \centering
        \includegraphics[width=0.8\textwidth]{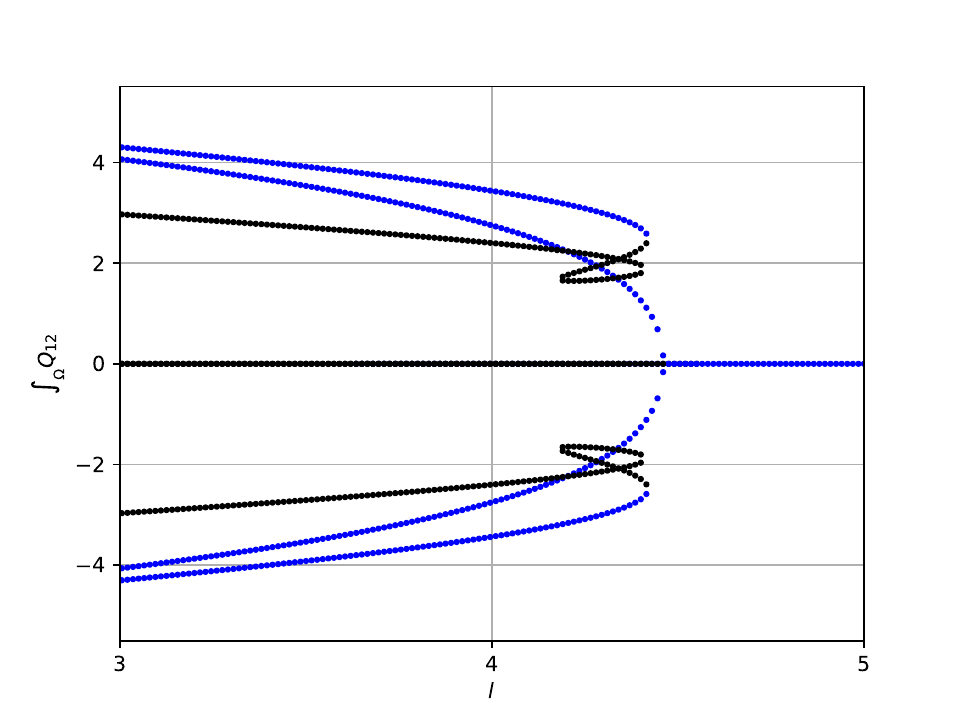}
    \end{minipage}
    \begin{minipage}{0.49\textwidth}
        \centering
        \includegraphics[width=0.8\textwidth]{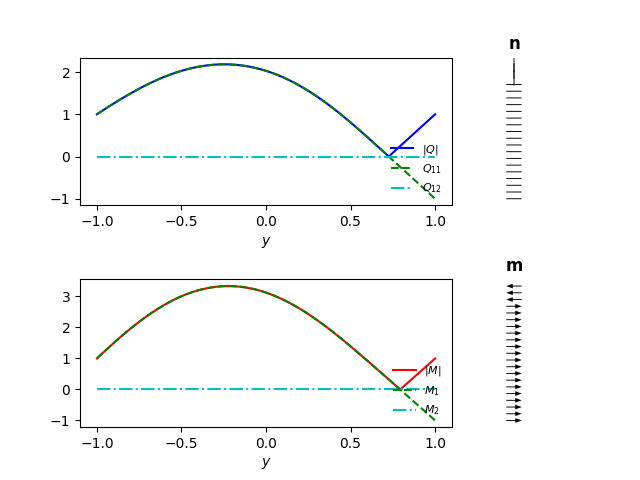}
    \end{minipage}
    \caption{Left: the bifurcation diagram with fixed $c=5$ and $\xi=1$; here, black labels unstable solutions while blue labels stable solutions.
    Right: one stable OR solution for $l=4.45$.}
    \label{fig:bif-c5}
\end{figure}

\begin{figure}[!ht]
    \centering
    \begin{minipage}{0.40\textwidth}
        \centering
        \includegraphics[width=1.0\textwidth]{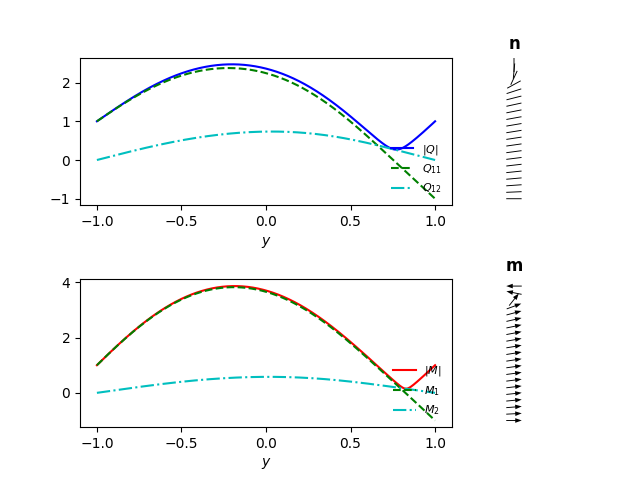}
    \end{minipage}
    \begin{minipage}{0.40\textwidth}
        \centering
        \includegraphics[width=1.0\textwidth]{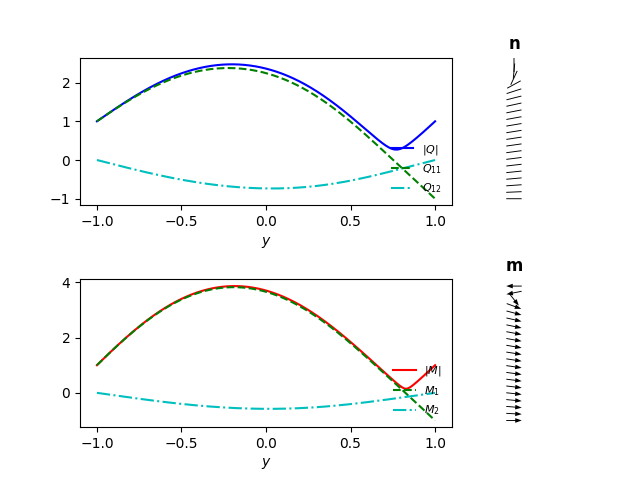}
    \end{minipage}
    \caption{Two new stable solutions at $l=4.43$ in \cref{fig:bif-c5}.}
    \label{fig:l443}
\end{figure}

\section{Conclusions}
\label{sec:conclusions}
We study confined systems with both nematic and magnetic order, inside a channel geometry with Dirichlet boundary conditions.
Specifically, we model the stable equilibria as minimisers of an appropriately defined energy on an interval $\left[-D, D \right]$, with three contributions: a nematic energy, a magnetic energy and a nemato-magnetic coupling energy.
We are interested in two parameters: the scaled elastic parameter $l$ that is inversely proportional to $D^2$, and the nemato-magnetic coupling parameter $c$.
We rigorously show that $c$ reduces the effective nematic correlation length $\xi_n$, for large $c$, and  we have the unique OR solution for $D \ll \frac{c^*}{c} \xi_n$, for some explicitly computable constant $c^*$, independent of $c$. 
The OR solution necessarily has separate nematic and magnetic domain walls, which are surface defects. 
As $D$ increases for fixed $c$ (or $c$ increases for fixed $D$), there can be multiple OR solutions, all of which are unstable with varying locations and multiplicities of domain walls, and the stable solutions do not have domain walls or polydomains for large $D$.
There are multiple stable solutions for large $D$, characterised by the rotation profiles of $\nvec$ and $\mvec$ between the boundaries.
Our choice of boundary conditions necessarily lead to boundary layers, which again will have distinct optical signatures, if implemented.
We have provided analytic characterisations of the limiting profiles for small $D$ (in terms of the OR solution) and large $D$ (in terms of limiting maps) accompanied by extensive numerical studies, which beautifully illustrate how we can use $l$ and $c$ to tune domain walls, boundary layers and multistability, all of which can be exploited for optical and mechanical responses. 
This work gives informative insight into the complex interplay between geometry, material properties, temperature (captured by $l$), nemato-magnetic coupling and boundary conditions in the solution landscapes (also see \cite{maity2021} for the numerical analysis of this system). 
Our methods can be modified to include different types of boundary conditions and nemato-magnetic coupling, which could enhance the stability of OR solutions, and we will develop universal theoretical frameworks for composite materials with multiple order parameters in future work.  




\bibliographystyle{siamplain}
\bibliography{main}
\end{document}


\maketitle

\begin{keywords}
  ferronematics, bifurcation analysis, stability, liquid crystals
\end{keywords}

\begin{AMS}
  34D20, 34C23, 76A15
\end{AMS}

\section{Full problem: homogeneous solutions and asymptotics as $c\to 0$ and $c\to \infty$}
\label{sec:homo-full}

The critical points of the bulk energy density $f(Q_{11}, Q_{12}, M_1, M_2)$, particularly the bulk energy minimisers for different values of $(c, \xi)$, are crucial for the analysis of the full variational problem in the main text.
We denote the bulk energy density by
\begin{equation}
    \label{eq:bulk_energy_density}
    \begin{aligned}
    f(Q_{11}, Q_{12}, M_1, M_2)
    \coloneqq  &\left( Q_{11}^2 + Q_{12}^2 - 1 \right)^2 + \frac{\xi}{4}\left(M_1^2 + M_2^2 - 1 \right)^2\\
               &- cQ_{11}\left(M_1^2 - M_2^2 \right) - 2c Q_{12}M_1 M_2.
    \end{aligned}
\end{equation}
Using the substitutions
\begin{equation}
    \label{eq:substitution}
    \begin{aligned}
        &Q_{11}=\rho\cos(\theta), Q_{12}=\rho\sin(\theta),\\
        &M_1=\sigma\cos(\phi), M_2=\sigma\sin(\phi),
    \end{aligned}
\end{equation}
the bulk energy density becomes
\begin{equation}
\label{eq:f}
    f(\rho,\sigma,\theta,\phi)=(\rho^2-1)^2+\frac{\xi}{4}(\sigma^2-1)^2-c\rho\sigma^2\cos(2\phi-\theta).
\end{equation}
The critical points $(\rho (c, \xi), \sigma(c, \xi))$, $\rho,\sigma\geq 0$ are solutions of the following system of algebraic equations:
\begin{subequations}
    \label{eq:substitute-system}
    \begin{align}
        &4\rho\cos(\theta)\left(\rho^2-1\right)-c\sigma^2\cos(2\phi)=0, \label{eq:sub1}\\
        &4\rho\sin(\theta)\left(\rho^2-1\right)-c\sigma^2\sin(2\phi)=0, \label{eq:sub2}\\
        &\xi\sigma\cos(\phi)\left(\sigma^2-1\right)-2\sigma\rho c\cos(\theta-\phi)=0, \label{eq:sub3}\\
        &\xi\sigma\sin(\phi)\left(\sigma^2-1\right)-2\sigma\rho c\sin(\theta-\phi)=0.\label{eq:sub4}
    \end{align}
\end{subequations}
There are two trivial solutions, i.e., $\rho=\sigma=0$; $\rho=1$ and $\sigma=0$, which exist for all real-valued $c$ and $\xi$. For these trivial solutions, $\theta$ and $\phi$ are undetermined.
For ferronematic solutions, we need $\rho,\sigma\ne 0$ in order to capture the nemato-magnetic coupling. The equations \cref{eq:substitute-system} can be explicitly solved;  firstly, \cref{eq:sub3} and \cref{eq:sub4} give
\begin{equation*}
    	\sigma^2=1+\frac{2\rho c\cos(\theta-\phi)}{\xi\cos(\phi)} \textrm{ and }
    	\sigma^2=1+\frac{2\rho c\sin(\theta-\phi)}{\xi\sin(\phi)},
\end{equation*}
respectively, which in turn require that 
\begin{equation*}
    \frac{\cos(\theta-\phi)}{\cos(\phi)}=\frac{\sin(\theta-\phi)}{\sin(\phi)}\implies 2\phi-\theta=n\pi\quad\textrm{for }n\in\mathbb{Z},
\end{equation*} imposing constraints on the relative alignment of $\nvec$ and $\Mvec$.
Furthermore, multiplying \cref{eq:sub1} by $\sin(\theta)$, \cref{eq:sub2} by $\cos(\theta)$,  following similar steps for \cref{eq:sub3} and \cref{eq:sub4}, and using the constraint $2\phi=\theta+n\pi$, we obtain
\begin{subequations}
    \label{eq:cubic}
    \begin{align}
&	\rho^3-\rho\left(1+\frac{c^2}{2\xi}\right)-\frac{c}{4}=0 \label{eq:cubic1}, \\
&	\rho^3-\rho\left(1+\frac{c^2}{2\xi}\right)+\frac{c}{4}=0 \label{eq:cubic2}.
\end{align}
\end{subequations}
Here, \cref{eq:cubic1} corresponds to $2\phi-\theta = 2n \pi$ whereas \cref{eq:cubic2} corresponds to $2\phi-\theta = (2n + 1) \pi$, i.e., an odd multiple of $\pi$. Once $\rho$ is determined by \cref{eq:cubic}, $\sigma$ is given by
\begin{subequations}
\begin{align}
&    \sigma^2=1+\frac{2\rho c}{\xi} \label{eq:sigma_cubic1},\\
&    \sigma^2=1-\frac{2\rho c}{\xi} \label{eq:sigma_cubic2};
    \end{align}
\end{subequations}
where \cref{eq:sigma_cubic1} (resp.\ \cref{eq:sigma_cubic2}) is associated with \cref{eq:cubic1} (resp.\ \cref{eq:cubic2}).

To solve \cref{eq:cubic1}, let $\rho=S+T$ and recall that $(S+T)^3-3ST(S+T)-(S^3+T^3)=0$, then we have
\begin{eqnarray*}
&& ST=\frac{1}{3}\left(1+\frac{c^2}{2\xi}\right), \qquad 
 S^3+T^3=\frac{c}{4},
\end{eqnarray*}
from which we deduce that $S^3$ and $T^3$ are roots of the quadratic equation
\begin{equation*}
	z^2-\frac{c}{4}z+\frac{1}{27}\left(1+\frac{c^2}{2\xi}\right)^3=0,
\end{equation*}
i.e., $z_\pm=\frac{c}{8}\pm\sqrt{\frac{c^2}{64}-\frac{1}{27}\left(1+\frac{c^2}{2\xi}\right)^3}$.
Hence, $S$ and $T$ are given by
\begin{subequations}
    \begin{align}
    & S=\omega_j\left(\frac{c}{8}+\sqrt{\frac{c^2}{64}-\frac{1}{27}\left(1+\frac{c^2}{2\xi}\right)^3}\right)^\frac{1}{3} \eqqcolon \omega_j \Theta_1 \label{eq:S_def},\\
    & T=\omega_k\left(\frac{c}{8}-\sqrt{\frac{c^2}{64}-\frac{1}{27}\left(1+\frac{c^2}{2\xi}\right)^3}\right)^\frac{1}{3} \eqqcolon \omega_k \Theta_2,\label{eq:S_T_def}
    \end{align}
\end{subequations}
for $j,k\in\{1,2,3\}$, where $\omega_1=1, \omega_2=\frac{-1+\sqrt{3}\mi}{2}$ and $\omega_3=\frac{-1-\sqrt{3}\mi}{2}$ are the cube roots of unity.
Throughout this work, we use the notation $\mi=\sqrt{-1}$.
This yields nine possible roots of \cref{eq:cubic1},
of which only three are viable choices since $ST=\frac{1}{3}\left(1+\frac{c^2}{2\xi}\right)$ and thus, $\omega_j\omega_k=1$.
Therefore, the viable roots of \cref{eq:cubic1} are given by, for $k\in \{1,2,3\}$,
\begin{equation}
    \rho=
    \omega_k \Theta_1 + \omega_k^2 \Theta_2
    \label{eq:rho_even},
\end{equation}
and the corresponding values of $\sigma$ are
\begin{equation}
    \label{eq:sigma_even}
    \sigma=
    \sqrt{1+\frac{2c}{\xi} \left( \omega_k \Theta_1 + \omega_k^2 \Theta_2 \right)}
\end{equation}
with $2\phi-\theta$ being an even multiple of $\pi$.

The cubic equation \cref{eq:cubic2} can be tackled similarly so that the relevant roots of \cref{eq:cubic2} and the corresponding values of $\sigma$ are given by
\begin{align}
    \rho &=\omega_k\left(-\frac{c}{8}+\sqrt{\frac{c^2}{64}-\frac{1}{27}\left(1+\frac{c^2}{2\xi}\right)^3}\right)^\frac{1}{3}+\omega_k^2\left(-\frac{c}{8}-\sqrt{\frac{c^2}{64}-\frac{1}{27}\left(1+\frac{c^2}{2\xi}\right)^3}\right)^\frac{1}{3} \label{eq:rho_odd} \\
         &\eqqcolon \omega_k\Lambda_1 + \omega_k^2 \Lambda_2, \nonumber 
\end{align}
and
\begin{equation}
\label{eq:sigma_odd}
\sigma=
\sqrt{1-\frac{2c}{\xi} \left( \omega_k\Lambda_1 + \omega_k^2 \Lambda_2 \right)},
\end{equation}
with $2\phi - \theta$ being an odd multiple of $\pi$, for $k\in\{1,2,3\}$.
Hence, the solution pairs $(\rho (c, \xi),\sigma(c, \xi))$ for the algebraic system \cref{eq:substitute-system} are given by (\cref{eq:rho_even},\cref{eq:sigma_even}) and (\cref{eq:rho_odd},\cref{eq:sigma_odd}).

Amongst the admissible critical points, we require $\rho$ and $\sigma$ to be non-negative and real-valued. In fact, if $27c^2\leq 64\left(1+\frac{c^2}{2\xi}\right)$ then both \cref{eq:rho_even} and \cref{eq:rho_odd} yield a real-valued $\rho$ for each $k\in\{1,2,3\}$ (one can check this with De Moivre's formula), whilst for $27c^2>64\left(1+\frac{c^2}{2\xi}\right)$, \cref{eq:rho_even} and \cref{eq:rho_odd} have a real solution only if $k=1$.
Furthermore, if $\rho$ is real, \cref{eq:sigma_even} gives a real-valued $\sigma$ when $-\frac{2c}{\xi}\rho\leq 1$, whilst \cref{eq:sigma_odd} yields a real-valued $\sigma$ if $\frac{2c}{\xi}\rho\leq 1$.
Regarding non-negative solution pairs, we simply enumerate the solution pairs with $\rho\ge 0$ and choose the positive root for $\sigma$.

For simplicity, we now focus on the special case of $\xi=1$.
The inequality $27c^2<64\left(1+\frac{c^2}{2}\right)^3$ holds for all $c\geq 0$, and hence, 
both \cref{eq:rho_even} and \cref{eq:rho_odd} yield real-valued $\rho$.
Therefore, we have the following positive real solutions:%
\begin{subequations}
    \begin{align}
    &\rho= \Theta_1 +\Theta_2 \label{eq:rho1},\\
    &\rho= \Lambda_1 + \Lambda_2\label{eq:rho2},\\
    &\rho= \omega_3 \Lambda_1 + \omega_2 \Lambda_2\label{eq:rho3},
    \end{align}
\end{subequations}
and the corresponding values of $\sigma$ are
\begin{subequations}
    \begin{align}
    &\sigma= \sqrt{1+2c\left(\Theta_1 +\Theta_2 \right)},
    \label{eq:sigma1}\\
    &\sigma= \sqrt{1-2c\left( \Lambda_1 + \Lambda_2 \right)},
    \label{eq:sigma2}\\
    &\sigma= \sqrt{1-2c \left( \omega_3 \Lambda_1 + \omega_2 \Lambda_2 \right)}.
    \label{eq:sigma3}
    \end{align}
\end{subequations}
Moreover, the solutions in  \cref{eq:sigma1} and \cref{eq:sigma3} correspond to a real-valued $\sigma$ for all $c>0$, while \cref{eq:sigma2} is real for $c\leq \frac{1}{2}$. Henceforth, we do not consider the solution pair (\cref{eq:rho2},\cref{eq:sigma2}) in this manuscript.

The next task is to determine which solution pair, $(\rho,\sigma)$, minimises the bulk energy density \cref{eq:f}, for $c\geq 0$ with $\xi=1$.
It is enough to observe that $f(0,0)=\frac{5}{4}$ and $f(1,0)=\frac{1}{4}$, and identify which of the non-trivial solution pairs, if any, satisfy $f(\rho,\sigma)<\frac{1}{4}$.
In \cref{fig:energy_plots}, we plot the bulk energy density \cref{eq:f} as a function of $c$ for each non-trivial solution pair (\cref{eq:rho1}, \cref{eq:sigma1}), (\cref{eq:rho3}, \cref{eq:sigma3}) and observe that (\cref{eq:rho1},\cref{eq:sigma1}) is the global energy minimiser for all positive values of $c$.
\begin{figure}[!ht]
    \centering
    \begin{minipage}{0.49\textwidth}
        \centering
        \includegraphics[scale=0.34]{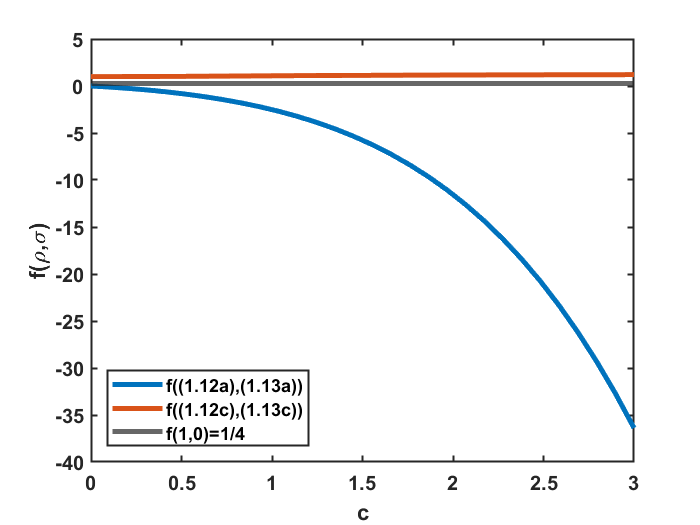}
    \end{minipage}
    \begin{minipage}{0.49\textwidth}
        \centering
        \includegraphics[scale=0.34]{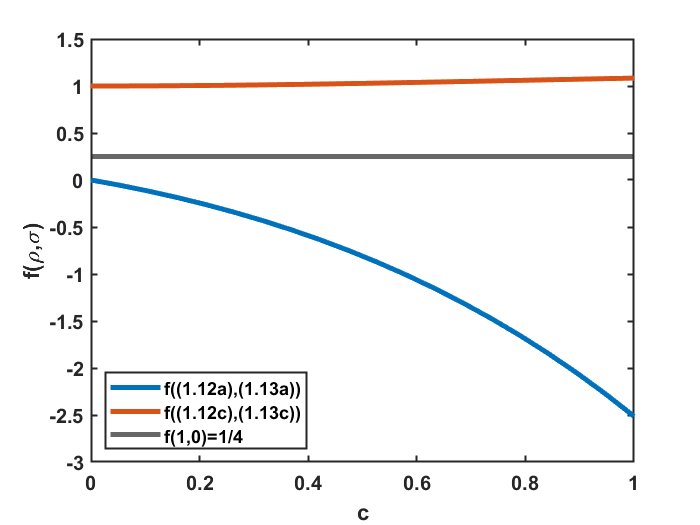}
    \end{minipage}
    \caption{Plots of the bulk energy density \cref{eq:f} at $(\rho,\sigma)=$ (\cref{eq:rho1},\cref{eq:sigma1}) and (\cref{eq:rho3},\cref{eq:sigma3}) as functions of $c$ with $\xi=1$ for
        (left) $c\in[0,3]$ and (right) $c\in[0,1]$.
}
    \label{fig:energy_plots}
\end{figure}

Next, we compute asymptotic expansions for the minimising pair $(\cref{eq:rho1}, \cref{eq:sigma1})$ as $c\to 0$, $c\to \infty$ respectively. As $c\to 0$, we expect the minimising pair to approach $(\rho, \sigma) \to (1,1)$, since $(\rho, \sigma) = (1,1)$ is the minimiser of the bulk energy density with $c=0$.
To this end, we first approximate the square root terms in \cref{eq:rho1} as
\begin{equation*}
    \sqrt{-\frac{c^2}{64}+\frac{1}{27}\left(1+\frac{c^2}{2}\right)^3}\approx\frac{1}{3\sqrt{3}}\left(1+\frac{69c^2}{128}\right),
\end{equation*}
by 
applying the binomial formula.
Then using De Moivre's formula and an appropriate binomial formula, the first cube root is approximated by
\begin{align*}
    \Theta_1 = \left(\frac{c}{8}+\sqrt{\frac{c^2}{64}-\frac{1}{27}\left(1+\frac{c^2}{2}\right)^3}\right)^\frac{1}{3}
    &\approx \left(\frac{\mi}{3\sqrt{3}}\right)^\frac{1}{3}\left(1+\left(\frac{3\sqrt{3}c}{8\mi}+\frac{69c^2}{128}\right)\right)^\frac{1}{3}\\
    &=\left(\frac{1}{2}+\frac{\sqrt{3}\mi}{6}\right)\left(1+\frac{\sqrt{3}c}{8\mi}+\frac{29c^2}{128}+\mathcal{O}(c^3)\right)\\
    &\approx\frac{1}{2}+\frac{c}{16} + \mi P(c).
\end{align*}
In the last approximation, we have ignored $\mathcal{O}(c^2)$ terms for small $c$ and $P(c)\coloneqq \frac{\sqrt{3}}{6}-\frac{\sqrt{3}c}{16}$.
The second cube root is the complex conjugate of the first cube root in \cref{eq:rho1}, and thus for the solution pair (\cref{eq:rho1}, \cref{eq:sigma1}), we have as $c\to 0$ that
\begin{subequations}
    \label{eq:c-small-approx1}
    \begin{align}
    &	\rho\approx1+\frac{c}{8},\label{eq:rho1_c_small}\\
    &	\sigma^2\approx 1+2c+\frac{c^2}{4}\label{eq:sigma1_c_small}.
    \end{align}
\end{subequations}


For large $c \gg 1$, the square root in \cref{eq:rho1} is given by (to leading order),
\begin{equation}
    \label{eq:squareroot-approx}
\sqrt{\frac{1}{27}\left(1+\frac{c^2}{2}\right)^3-\frac{c^2}{64}}\approx\frac{\sqrt{6}c^3}{36}.
\end{equation} 
Similarly, the first cube root in \cref{eq:rho1} can be approximated to leading order as
\begin{equation*}
    \Theta_1
     \approx \left(\frac{c}{8}+\mi \frac{\sqrt{6}c^3}{36}\right)^{\frac{1}{3}}
    \approx\left(\frac{\sqrt{3}+\mi}{2}\right)\left(\frac{\sqrt{6}}{36}\right)^\frac{1}{3}c.
\end{equation*}
Applying similar arguments to the second cube root, we deduce that for $c\gg 1$
\begin{subequations}
    \begin{align}
        & \rho\approx \left(\frac{\sqrt{2}}{4}\right)^\frac{1}{3}c, \label{eq:c-large-rhoapprox}\\
            & \sigma^2\approx 
            1+\sqrt{2}c^2.
    \end{align}
\end{subequations}
Hence, both $\rho$ and $\sigma$ grow linearly with $c$ for large $c\gg 1$, for the bulk energy minimiser (\cref{eq:rho1}, \cref{eq:sigma1}).

\begin{remark}\label{rem:uncoupled_solutions}
    For the uncoupled problem with $c=0$, we have nine solution pairs $(\rho,\sigma)$ (with $\theta,\phi$ undetermined) of the system \cref{eq:substitute-system}, namely,
    \begin{equation}
        \label{eq:nine-sol-pairs}
        (0,0),\;(0,\pm 1),\; (\pm 1,0 ),\; (\pm 1, \pm 1),
    \end{equation}
    where all sign combinations are possible.
    In the limit $c\to 0$, we expect each of the solution pairs $(\cref{eq:rho_even},\cref{eq:sigma_even})$ and $(\cref{eq:rho_odd},\cref{eq:sigma_odd})$ to reduce to a solution of the uncoupled problem.
    In fact, $(\cref{eq:rho_even},\cref{eq:sigma_even})$ and $(\cref{eq:rho_odd},\cref{eq:sigma_odd})$ recover all of the solution pairs in \cref{eq:nine-sol-pairs}, except $(0,0)$ and $(\pm 1,0)$.
   However, these unrecovered solution pairs, $(\rho, \sigma) = \left\{ (0,0), (\pm 1, 0) \right\}$ are solutions of the system \cref{eq:substitute-system} for arbitrary $c\geq 0$, and thus not perturbed for $c>0$.
\end{remark}

\section{Asymptotic expansions as $l\to\infty$ and $c\to 0$}

In the $l\to \infty$ limit, we compute useful asymptotic expansions of the OR solution branch for large $l$ and small $c$, by setting $l=\frac{1}{c}$ in the Euler--Lagrange equations stated in the main text and expanding around $(\Qvec^\infty, \Mvec^\infty)$ as shown below:

\begin{equation*}
    Q_{11}(y)  = -y + cf_2(y) + c^2 f_3(y) + \mathcal{O}(c^3), \quad M_1 = -y + cf_2^*(y) + c^2 f_3^*(y) + \mathcal{O}(c^3).
\end{equation*}
Substituting the above into the derived Euler--Lagrange equations in the main text (with $l=\frac{1}{c}$) and equating powers of $c$, we solve the computed second order ordinary differential equations for $f_2,f_3,f_2^*,f_3^*$, subject to the boundary conditions $f_2(-1)=f_2(1)=f_3(-1)=f_3(1)=0$ and $f^*_2(-1)=f^*_2(1)=f^*_3(-1)=f^*_3(1)=0$.
This gives
\begin{equation}
    \label{eq:Q-exp}
    Q_{11}(y) = -y + c\left(-\frac{1}{5} y^5 +\frac{2}{3} y^3 - \frac{7}{15}y\right) + c^2 p(y) + \mathcal{O}(c^3),
\end{equation}
and 
\begin{equation}
    \label{eq:M-exp}
    M_{1}(y) = -y + c \left(-\frac{1}{20} y^5 +\frac{1}{6} y^3 -\frac{7}{60} y\right) + c^2 q(y) + \mathcal{O}(c^3),
\end{equation}
for $l=\frac{1}{c}$ and $l \gg 1$.
Here,
\begin{equation*}
    p(y) = -\frac{1}{30} y^9+\frac{22}{105}y^7 -\frac{31}{75}y^5 -\frac{1}{12}y^4
    +\frac{14}{45}y^3-\frac{233}{3150} y+\frac{1}{12},
\end{equation*}
and
\begin{equation*}
    q(y) =  -\frac{1}{480} y^9 + \frac{11}{840}y^7-\frac{31}{1200}y^5 - \frac{1}{6}y^4 +\frac{7}{360}y^3 - \frac{233}{50400} y + \frac{1}{6}.
\end{equation*}

We can further check the validity of these expansions, \cref{eq:Q-exp} and \cref{eq:M-exp}, numerically.
To this end, we compare $(\cdot)^{num}$ and $(\cdot)^{asymp}$ in the $L^\infty$-norm,
where $(\cdot)^{num}$ is the numerical solution and $(\cdot)^{asymp}$ corresponds to the asymptotic expansion, depending on the truncation of the expansions in \cref{eq:Q-exp} and \cref{eq:M-exp}.
For instance, a first order truncation yields
\begin{equation*}
    \begin{aligned}
        Q_{11}^{asymp} &= -y + c\left(-\frac{1}{5} y^5 +\frac{2}{3} y^3 - \frac{7}{15}y\right),\\
        M_1^{asymp} &= -y + c\left(-\frac{1}{20} y^5 +\frac{1}{6} y^3 -\frac{7}{60} y\right).
    \end{aligned}
\end{equation*}
The left hand column of \cref{fig:c-asymp} shows a first order convergence by truncating the expansions up to $\mathcal{O}(c^0)$,
whilst a first order truncation leads to a second order convergence as shown in the middle column of \cref{fig:c-asymp} and finally, in the right hand column, a truncation up to $\mathcal{O}(c^2)$ demonstrates an almost third order convergence with respect to $c$, for both $Q_{11}$ and $M_1$.

\begin{figure}[ht!]
    \centering
    \begin{subfigure}{0.32\textwidth}
        \centering
    \includegraphics[scale=0.29, trim={0cm 0cm 0cm 0cm}, clip]{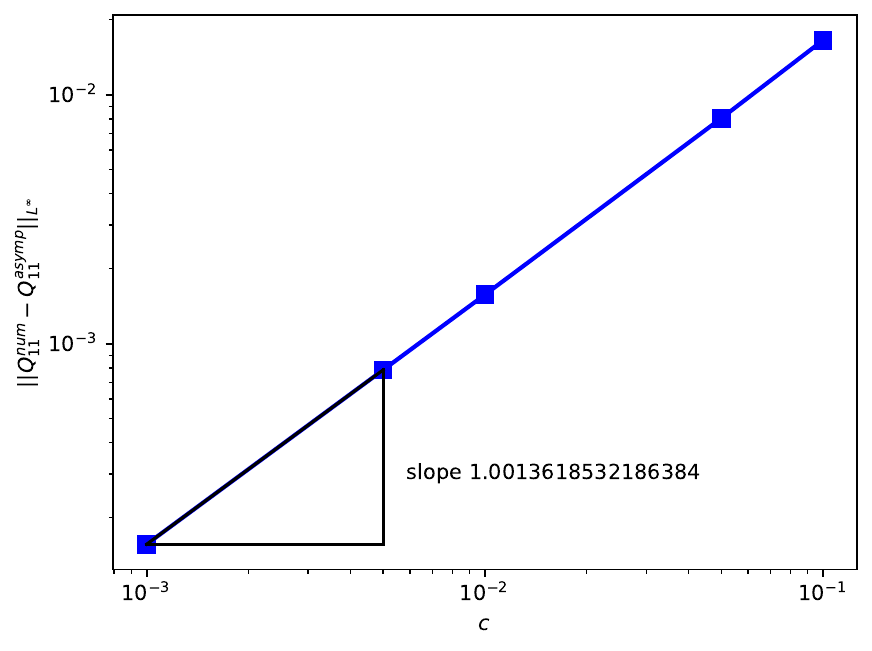}\\
\end{subfigure}\hfill
    \begin{subfigure}{0.32\textwidth}
        \centering
    \includegraphics[scale=0.29, trim={0cm 0cm 0cm 0cm}, clip]{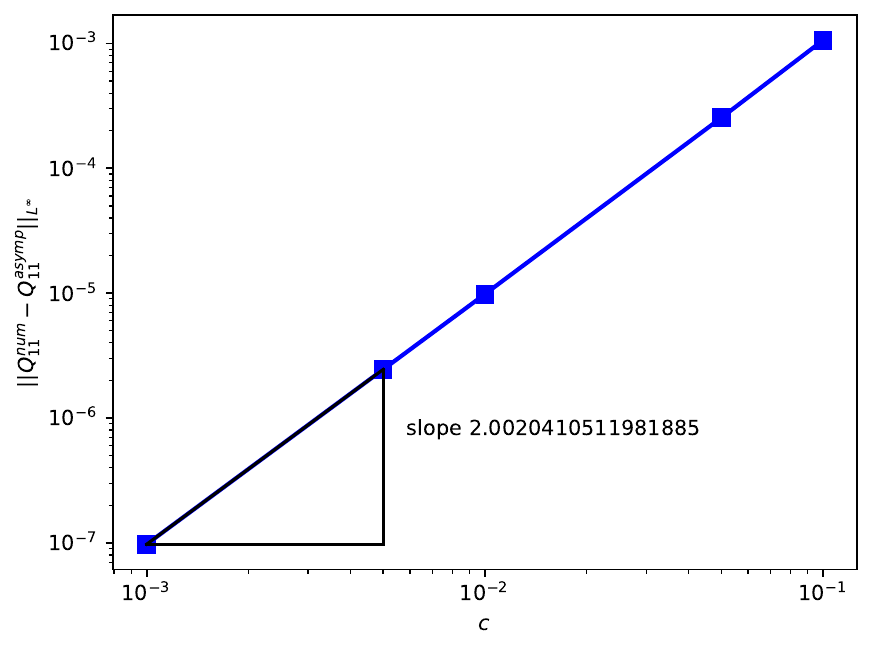}\\
\end{subfigure}\hfill
    \begin{subfigure}{0.32\textwidth}
        \centering
    \includegraphics[scale=0.29, trim={0cm 0cm 0cm 0cm}, clip]{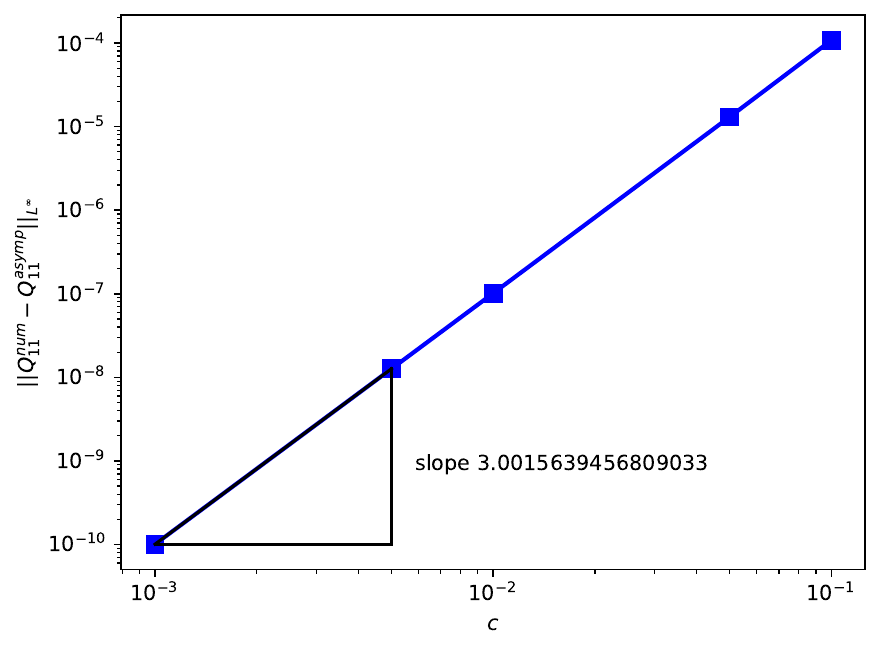}\\
\end{subfigure}\hfill
    \begin{subfigure}{0.32\textwidth}
        \centering
    \includegraphics[scale=0.29, trim={0cm 0cm 0cm 0cm}, clip]{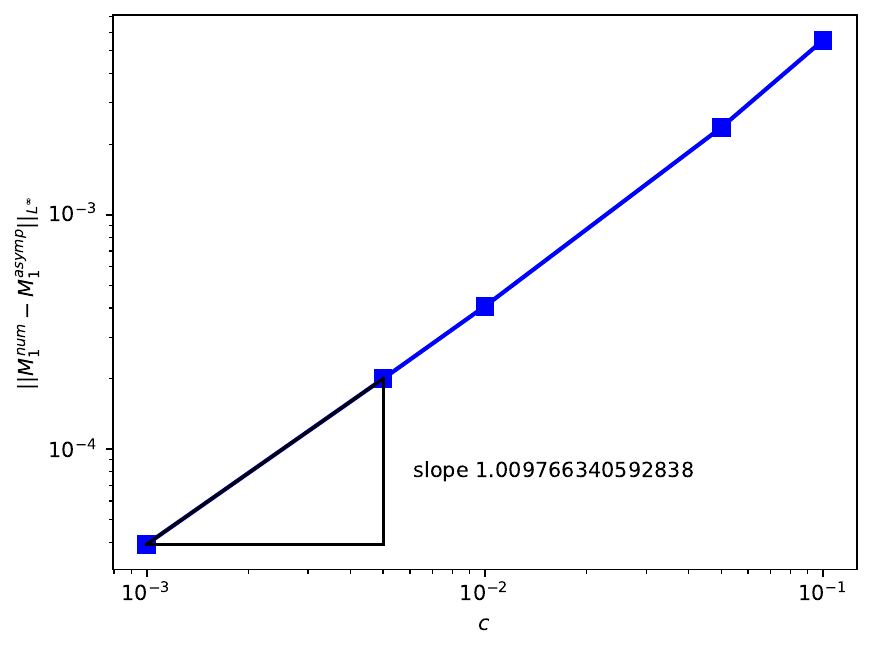}\\
\end{subfigure}\hfill
    \begin{subfigure}{0.32\textwidth}
        \centering
    \includegraphics[scale=0.29, trim={0cm 0cm 0cm 0cm}, clip]{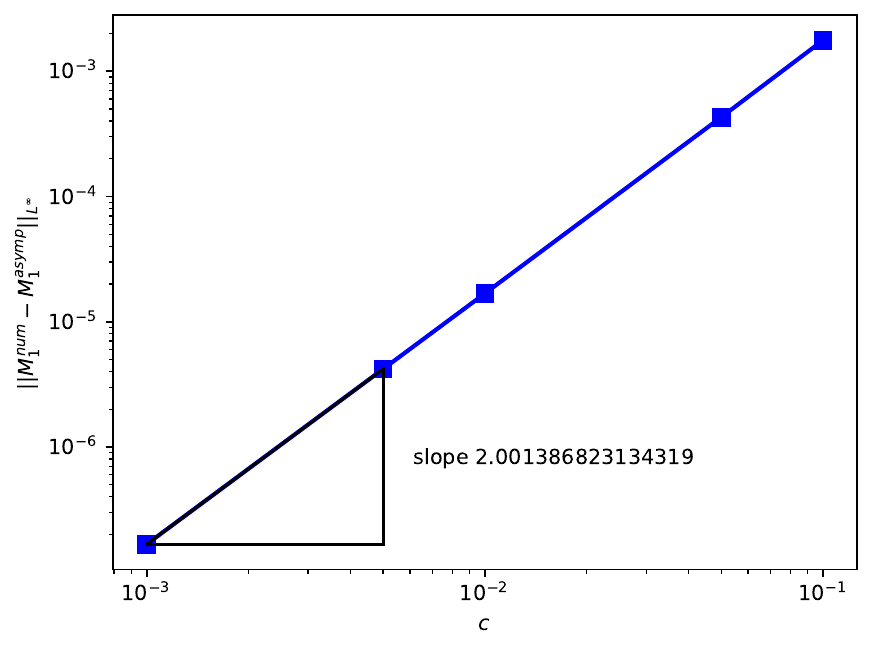}\\
\end{subfigure}\hfill
    \begin{subfigure}{0.32\textwidth}
        \centering
    \includegraphics[scale=0.29, trim={0cm 0cm 0cm 0cm}, clip]{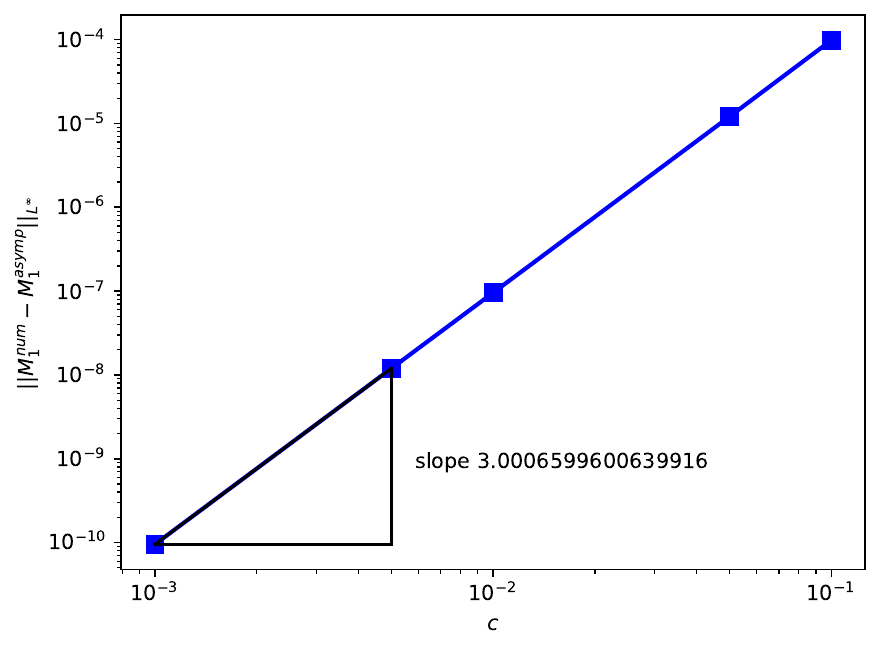}\\
\end{subfigure}\hfill
\caption{Log-log plots of $\|Q_{11}^{num} - Q_{11}^{asymp}\|_{L^\infty}$ (top row) and $\|M_1^{num}-M_1^{asymp}\|_{L^\infty}$ (bottom row).
    Left: truncating asymptotic expansions \cref{eq:Q-exp} and \cref{eq:M-exp} at $c^0$. 
    Middle: truncating asymptotic expansions at $c^1$. 
    Right: truncating asymptotic expansions at $c^2$.}
\label{fig:c-asymp}
\end{figure}

\section{Order reconstruction system: homogeneous solutions and asymptotics as $c\to 0$ and $c\to \infty$}
From the convergence result for $l\to \infty$ in the main text, we have that $\left(\Qvec^{OR}, \Mvec^{OR}\right) \to \left(-y, 0, -y, 0 \right)$ as $l\to \infty$.
The analysis in the $l\to 0$ limit is more involved, which necessitates a computation of the homogeneous solutions (with $\xi=1$) or critical points of the OR bulk potential
\begin{equation}
    f^{OR}(Q_{11},M_1)=(Q^2_{11}-1)^2+\frac{1}{4}(M^2_1-1)^2-cQ_{11}M_1^2,
    \label{eq:OR_homogeneous_energy}
\end{equation}
leading to the following algebraic equations:
\begin{subequations}\label{eq:OR_homogeneous_system}
    \begin{align}
        & 4 Q_{11}(Q_{11}^2 - 1) - cM_1^2=0 \label{eq:Q11_homogeneous_eqt},\\
        & M_1 (M_1^2 - 1) - 2 c Q_{11} M_1=0 \label{eq:M1_homogeneous_eqt}.
\end{align}
\end{subequations}
From \cref{eq:OR_homogeneous_system}, we have either $M_1=0$ and $Q_{11}=\pm1,0$ for any real-valued $c$, or $M_1^2=2cQ_{11}+1$.
Following the approach in \cref{sec:homo-full}, the non-trivial solutions of \cref{eq:OR_homogeneous_system} are
\begin{subequations}
\begin{align}
    & Q_{11}= \Theta_1 + \Theta_2,
    \label{eq:Q11_1}\\
    & Q_{11}= \omega_2 \Theta_1 +\omega_3 \Theta_2,
    \label{eq:Q11_2}\\
    & Q_{11}= \omega_3 \Theta_1 + \omega_2 \Theta_2,
    \label{eq:Q11_3}
    \end{align}
\end{subequations}
and the corresponding values of $M_1$ are
\begin{subequations}
\begin{align}
    & M_1=\pm
    \sqrt{2c\left( \Theta_1 + \Theta_2 \right) +1},
    \label{eq:M1_1}\\
    & M_1=\pm
    \sqrt{2c\left( \omega_2 \Theta_1 + \omega_3 \Theta_2 \right) + 1},
    \label{eq:M1_2}\\
    & M_1=\pm
    \sqrt{2c\left( \omega_3 \Theta_1 + \omega_2 \Theta_2 \right) +1}.
    \label{eq:M1_3}
\end{align}
\end{subequations}
Again, $\omega_1=1$, $\omega_2=\frac{-1+\sqrt{3}\mi}{2}$ and $\omega_3=\frac{-1-\sqrt{3}\mi}{2}$ are the cube roots of unity.
Therefore, the critical points of the OR bulk potential are (\cref{eq:Q11_1},\cref{eq:M1_1}), (\cref{eq:Q11_2},\cref{eq:M1_2}) and (\cref{eq:Q11_3},\cref{eq:M1_3}).
Since $27c^2< 64\left(1+\frac{c^2}{2}\right)^3$ for all $c\geq 0$,  \cref{eq:Q11_1}-\cref{eq:Q11_3} are real (by De Moivre's formula) and \cref{eq:M1_1}, \cref{eq:M1_3} are real for all positive $c$, whilst \cref{eq:M1_2} is  real for $c\leq\frac{1}{2}$.
Therefore, we ignore the pair (\cref{eq:Q11_2},\cref{eq:M1_2}) in what follows.
It is straightforward to verify from the same plot (see \cref{fig:energy_plots}) that the pair (\cref{eq:Q11_1},\cref{eq:M1_1}) is the global minimiser of the OR bulk potential \cref{eq:OR_homogeneous_energy}, for all values of $c$.



Next, we compute asymptotic expansions for (\cref{eq:Q11_1},\cref{eq:M1_1}), for small and large $c$.
We omit the details as they follow from \cref{sec:homo-full}.
For small $c$, one can check that
\begin{equation*}
    Q_{11}\approx1+\frac{c}{8},\;M_1^2 \approx 1 + 2c + \frac{c^2}{4}.
\end{equation*}
While for large $c$, we deduce that
\begin{equation*}
    Q_{11} \approx \left(\frac{\sqrt{2}}{4}\right)^\frac{1}{3}c,\; M_1^2\approx 1+\sqrt{2} c^2,
\end{equation*}
thus, $Q_{11}$ grows linearly in $c$ and $M_1^2$ grows quadratically in $c$, for $c\gg 1$.
\begin{remark}
    For $c=0$, there are nine solution pairs $(Q_{11},M_{1})$, given by \cref{eq:nine-sol-pairs}.
    In the limit $c\to 0$, we expect each of the solution pairs $(\cref{eq:Q11_1},\cref{eq:M1_1})$, $(\cref{eq:Q11_2},\cref{eq:M1_2})$ and $(\cref{eq:Q11_3},\cref{eq:M1_3})$ to reduce to a solution of the uncoupled problem.
    In fact, we can recover six solution pairs of the uncoupled system with $\sigma \neq 0$. 
    The solutions of the uncoupled system with $\sigma=0$, exist for all $c>0$, and are hence, unperturbed by the nemato-magnetic coupling for $c>0$.
\end{remark}
